\newcommand{\calM}{\mathcal{M}}
\newcommand{\calO}{\mathcal{O}}
\newcommand{\bbC}{\mathbb{C}}
\newcommand{\bbP}{\mathbb{P}}
\newcommand{\bbQ}{\mathbb{Q}}
\newcommand{\bbZ}{\mathbb{Z}}
\newcommand{\Tr}{\textup{Tr}}
\newcommand{\rank}{\textup{rank}}
\newcommand{\Pic}{\textup{Pic}}
\def\Ker{{\text{Ker}}}
\newcommand{\DR}{\text{DR}}
\newcommand{\crys}{\text{crys}}
\newcommand{\NS}{\text{NS}}
\newcommand{\Br}{\text{Br}}
\newcommand{\GL}{\text{GL}}
\newtheorem{theorem}{Theorem}[section]
\newtheorem{lemma}[theorem]{Lemma}
\newtheorem{proposition}[theorem]{Proposition}
\newtheorem{corollary}[theorem]{Corollary}
\theoremstyle{definition}     
\newtheorem{example}[theorem]{Example}
\newtheorem{remark}[theorem]{Remark}
\numberwithin{equation}{section}
\begin{document}

\title[Orders of automorphisms of K3 surfaces]{Orders of automorphisms of K3 surfaces}

\author[J. Keum]{JongHae Keum}
\address{School of Mathematics, Korea Institute for Advanced Study, Seoul 130-722, Korea } \email{jhkeum@kias.re.kr}
\thanks{Research supported by National Research Foundation of Korea (NRF grant).}

\subjclass[2000]{Primary 14J28, 14J50}

\date{July 30 2012, revised September 2013}
\begin{abstract} We determine all orders of automorphisms of complex K3 surfaces and of K3 surfaces in characteristic $p>3$.
In particular,  66 is the maximum finite order in each
characteristic $p\neq 2,3$. As a consequence, we give a bound for the orders of finite groups acting on K3 surfaces in characteristic $p>7$.
\end{abstract}

\maketitle

\section{Introduction}

It is a  natural and fundamental problem to determine all possible
orders of automorphisms of K3 surfaces in any characteristic. Even
in the case of complex K3 surfaces, this problem has been settled
only for symplectic automorphisms and purely non-symplectic
automorphisms (Nikulin \cite{Nik}, Kond\={o} \cite{Ko}, Oguiso \cite{Og1}, Machida-Oguiso \cite{MO}). In
this paper we solve the problem in all characteristics except 2
and 3.
Let ${\rm Ord}_{\bbC}$ and ${\rm
Ord}_{p}$ be the sets of all orders of
automorphisms of finite order respectively of complex K3 surfaces and  K3 surfaces in characteristic $p$. Our main
result is the following:

\bigskip
\noindent {\bf Main Theorem.}
$${\rm Ord}_{\bbC}=\{N\,|\, N\,{\rm is\,a\,positive\,
integer},\,\phi(N)\le 20\},$$ \textit{where $\phi$ is the Euler
function, and}
$${\rm Ord}_{p}=\left\{\begin{array}{lll} {\rm Ord}_{\bbC}&{\rm if}& p=7\,{\rm or}\,\, p>19,\\
{\rm Ord}_{\bbC}\setminus\{p,\, 2p\}&{\rm if}& p=13,\, 17,\, 19,\\
{\rm Ord}_{\bbC}\setminus\{44\}&{\rm if}& p=11,\\
{\rm Ord}_{\bbC}\setminus\{25,\, 50,\,60\}&{\rm if}& p=5.\\
\end{array} \right.$$
\textit{In particular,  $66$ is the maximum finite order and ${\rm
Ord}_{p}\subset {\rm Ord}_{\bbC}$ in each characteristic $p\ge 5$.\\ In characteristic $p=2, 3$, a positive integer $N$ is the order of a  tame automorphism of a K3 surface if and only if $p\nmid N$ and $\phi(N)\le 20$.}

\medskip
The bound 66 is new even in characteristic 0. A previously known
bound in characteristic 0 is $528=8\times 66$ where 8 and 66 are
the bounds respectively for symplectic automorphisms of finite
order and purely non-symplectic automorphisms of finite order of
complex K3 surfaces (Nikulin \cite{Nik}). In characteristic $p>0$
bounds have been known only for special automorphisms, e.g.,
the bound 8 for tame symplectic automorphisms \cite{DK2}, the
bound 11 for $p$-cyclic wild automorphisms (Theorem 2.1, \cite{DK2}), and the
bound of Nygaard \cite{Ny}: for an Artin-supersingular K3
surface $X$ the image of ${\rm Aut}(X)$ in
 ${\rm GL}(H^0(X,
\Omega^2_X))$ is cyclic of order dividing $p^{\sigma(X)}+1$ where
$\sigma(X)$ is the Artin invariant of $X$ which may take values 1,
2,..., 10.

In any characteristic, an automorphism of a K3 surface $X$ is
called \textit{symplectic} if it acts trivially on the space
$H^0(X, \Omega^2_X)$ of regular 2-forms,  \textit{non-symplectic}
otherwise, and \textit{purely non-symplectic} if it acts
faithfully on the space $H^0(X, \Omega^2_X)$. A group of symplectic
automorphisms is called \textit{symplectic}.  In characteristic $p>0$, an automorphism
of finite order will be called \textit{tame} if its order is coprime to $p$ and \textit{wild} if
divisible by $p$.

The list of finite groups which may act on a K3 surface is not yet
known. In positive characteristic the list seems much longer than
in characteristic 0. In fact, any finite group acting on a complex
K3 surface is of order $\le 3,840$ (Kond\=o \cite{Ko2}) and of
order $\le 960$ if symplectic (Mukai \cite{Muk}), while in
characteristic $p=5$ there is  a K3 surface with a wild action of the
simple group PSU$_3(5)$, whose order is $126,000$ (cf. \cite{DK2}), and
in characteristic $p=11$ a K3 surface with a wild action of the
Mathieu group $M_{22}$, whose order is $443,520$ (Dolgachev-Keum \cite{DK3}, Kond\=o \cite{Ko3}).
Thus, it is natural to expect that in some characteristics $p$ the
set ${\rm Ord}_{p}$ would be bigger than the set ${\rm Ord}_{\bbC}$. Our
result shows that this never happens if $p>3$ (and not likely to happen even
in characteristic 2 or 3). It is interesting to note that elliptic
curves share such properties (Remark \ref{EC}).

The proof of Main Theorem will be divided into three cases: the
tame case (Theorem \ref{main-tame}), the complex case (Theorem
\ref{main-complex}) and  the wild case (Theorem \ref{main-wild}).
We first determine all orders of tame automorphisms of K3
surfaces.

\begin{theorem}\label{main-tame} Let $k$ be an algebraically closed field of
characteristic $p>0$.  Let $N$ be a positive integer not divisible
by $p$. Then $N$ is the order of an automorphism of a K3 surface
$X/k$ if and only if $\phi(N)\le 20$.
\end{theorem}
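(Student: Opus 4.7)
\noindent\emph{Proof proposal.}
The plan is to reduce Theorem~\ref{main-tame} to the complex case, Theorem~\ref{main-complex}, in both directions: specialization for sufficiency, and equivariant lifting to characteristic zero for necessity.

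For sufficiency, given $N$ with $\phi(N)\le 20$, Theorem~\ref{main-complex} supplies a complex K3 surface $Y$ with an automorphism $\tau$ of order $N$. I would take a model of the pair $(Y,\tau)$ over the ring of integers $\calO_K$ of a number field $K$ of definition, spread it out to a smooth projective family over $\Spec\calO_K[1/S]$ for some finite set of primes $S$ containing the ramification locus, and reduce modulo a prime $\mathfrak{p}$ of residue characteristic $p$ avoiding $S$. Since $p\nmid N$, the order of $\tau$ is preserved under reduction, and base change to $k$ produces the required K3 surface in characteristic $p$. In the exceptional cases where the standard classical construction happens to degenerate at $p$, one substitutes an explicit model (elliptic K3, weighted hypersurface, Kummer surface) whose defining equations are visibly smooth modulo $p$.

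For necessity, given $(X,\sigma)$ over $k$ with $\sigma$ of order $N$ coprime to $p$, the plan is to lift $(X,\sigma)$ together with a $\sigma$-invariant polarization to characteristic zero and apply the complex bound. Form a $\sigma$-invariant ample line bundle $L=\bigotimes_{i=0}^{N-1}(\sigma^i)^*L_0$ from any ample $L_0$. Because $|\langle\sigma\rangle|=N$ is prime to $p$, the group algebra $k[\sigma]$ is semisimple, so $\sigma$-invariants split off functorially from every cohomology group, and the equivariant deformation theory of $(X,L,\sigma)$ is controlled by the $\sigma$-invariants of the ordinary deformation and obstruction spaces of $(X,L)$. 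Serre duality together with $h^0(\Omega^1_X)=0$ gives $H^2(X,T_X)=0$, so deformations of the pair $(X,\sigma)$ are unobstructed; the only remaining obstruction, for extending $L$, lies in the at most one-dimensional space $H^2(X,\calO_X)^{\langle\sigma\rangle}$. Either this space vanishes, or it cuts out a hypersurface in the smooth $h^1(X,T_X)^{\langle\sigma\rangle}$-dimensional equivariant deformation space; in either case a formal lift $(\calX,\calL,\tilde\sigma)$ over a flat $W(k)$-algebra exists. Grothendieck's algebraization theorem, applied with the lifted ample $\calL$, yields a smooth projective family over $W(k)$ (or a finite extension) whose special fiber is $(X,\sigma)$ and whose geometric generic fiber is a K3 surface in characteristic zero admitting an automorphism of order exactly $N$. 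Theorem~\ref{main-complex} then forces $\phi(N)\le 20$.

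The main obstacle is the equivariant algebraization step: one must lift not merely $X$ but also the invariant polarization $L$ compatibly with $\sigma$. This is precisely where the tameness hypothesis $p\nmid N$ is indispensable, as the semisimplicity of $k[\sigma]$ underlies the clean decomposition of deformation and obstruction modules into $\sigma$-isotypic summands. Without that splitting, the obstruction for lifting $L$ could interact with $\sigma$ in a complicated way, whereas in the tame case it reduces to a single linear condition in a space of dimension at most one, which is always satisfiable after passing to a suitable flat quotient of the universal deformation ring.
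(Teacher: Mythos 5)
Your sufficiency argument (spread the complex examples out over the integers and reduce modulo a prime of good reduction, substituting explicit models at bad primes) is exactly the paper's route (Propositions \ref{C-exist} and \ref{exist}), so that half is fine. The problem is the necessity half, where you propose to lift $(X,\sigma)$ equivariantly to characteristic zero and quote Theorem \ref{main-complex}. The paper deliberately avoids this, and the remark following Theorem \ref{main-complex} explains why: Serre's equivariant lifting theorem requires $H^2(X,\calO_X)=0$, which fails for K3 surfaces, and there are finite \emph{tame} symplectic group actions on K3 surfaces in positive characteristic (the exceptional groups of order $>3840$) that demonstrably do not lift, since they cannot act on any complex K3 surface. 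The precise point where your argument breaks is the sentence ``in either case a formal lift $(\calX,\calL,\tilde\sigma)$ over a flat $W(k)$-algebra exists.'' When $H^2(X,\calO_X)^{\langle\sigma\rangle}\neq 0$ --- which happens exactly when $\sigma$ acts trivially on $H^0(X,\Omega^2_X)$, i.e.\ when $\sigma$ is symplectic --- the locus in the equivariant deformation space where $\calL$ extends is cut out by a single power series $f$ in $W(k)[[t_1,\dots,t_d]]$ with $d=\dim H^1(X,T_X)^{\langle\sigma\rangle}$, and nothing guarantees that $W(k)[[t_1,\dots,t_d]]/(f)$ has a quotient flat over $W(k)$: $d$ can be small (even $0$), $f$ can be a power of $p$, and the ``hypersurface'' can be concentrated in the special fibre. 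This is not a hypothetical failure; it is exactly the mechanism behind the non-liftable exceptional actions.

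The argument can be repaired, but only by a case split you did not make: if $\sigma$ has non-symplectic order $n>1$, then $\sigma^*$ acts on $H^2(X,\calO_X)\cong H^0(X,\Omega^2_X)^\vee$ by an $n$-th root of unity that is nontrivial in $k$ because $p\nmid n$, so $H^2(X,\calO_X)^{\langle\sigma\rangle}=0$, the obstruction to extending the invariant polarization vanishes at every stage, and the formal lift algebraizes (one should also note that the order of the automorphism is preserved on the generic fibre because $H^0(X,T_X)=0$ makes specialization of automorphisms injective); if $\sigma$ is symplectic, no lifting is needed because Proposition \ref{sym} already gives ${\rm ord}(\sigma)\le 8$. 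Even with this repair, be aware that your route is genuinely different from the paper's: the paper never lifts anything, but proves Proposition \ref{tame} by a direct case-by-case analysis of the eigenvalues of $g^*$ on $H^2_{\rm et}(X,\bbQ_l)$, using the eigenvalue lists for tame symplectic automorphisms (Lemma \ref{Lefschetz}), the tame Lefschetz formula (Proposition \ref{trace}) and integrality of characteristic polynomials (Lemma \ref{Weyl}); indeed Theorem \ref{main-complex} is itself proved in the paper by the same cohomological argument, so treating it as an independent input is legitimate but inverts the paper's logical order.
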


The if-part of Theorem \ref{main-tame} is proved in Section 3 by providing examples.  Indeed, over $k=\bbC$ motivated by the
Nikulin's fundamental work \cite{Nik}  Xiao \cite{Xiao}, Machida and
Oguiso \cite{MO}  proved that a positive integer $N$ is the order of a
purely non-symplectic automorphism of a complex K3 surface if and
only if $\phi(N)\le 20$ and $N\neq 60$. They also provided
examples (see \cite{MO} Proposition 4, also \cite{Ko}
Section 7 and \cite{Og1} Proposition 2.) In each of their examples
the K3 surface is defined over the integers and both the surface
and the automorphism have a good reduction mod $p$ as long as $p$
is coprime to the order $N$ and $p>2$ (Proposition \ref{exist}). Purely non-symplectic examples in characteristic $p=2$ are given in Example \ref{p2}. An example of a K3 surface with an automorphism of order 60
in any characteristic $p\neq 2, 3, 5$ (including the zero characteristic) is given in Example
\ref{5.12}. It can be shown (Keum \cite{K2}) that a K3 surface with a
tame automorphism of order 60 is unique up to isomorphism, and the
automorphism has non-symplectic order 12 and its 12th power is symplectic of order 5.

The only-if-part of Theorem \ref{main-tame} is proved in Section
4. Its proof depends on analyzing eigenvalues of the action on the
$l$-adic cohomology $H_{\rm et}^2(X,\bbQ_l)$, $l\neq p$. In
positive characteristic, the holomorphic Lefschetz formula is not
available and neither is the method using transcendental lattice.
Our proof extends to the complex case, yielding a proof free from
both, once we replace the $l$-adic cohomology by the singular
integral cohomology. In other words, the positive characteristic case and the zero characteristic case can be handled in a unified fashion.

\begin{theorem}\label{main-complex} A positive integer $N$ is the order of an automorphism of a complex K3 surface, projective or non-projective, if and only if
$\phi(N)\le 20.$
\end{theorem}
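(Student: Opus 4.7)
The plan is to prove both directions separately, following the strategy of Theorem \ref{main-tame} but using singular integral cohomology in place of $\ell$-adic.

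For the ``if'' direction, every $N$ with $\phi(N)\le 20$ and $N\ne 60$ is realized as the order of a purely non-symplectic automorphism of a projective complex K3 surface by Xiao \cite{Xiao} and Machida--Oguiso \cite{MO}. The sole remaining case $N=60$ is not purely non-symplectic; it is handled by Example \ref{5.12}, which constructs an order-$60$ automorphism (whose $12$-th power is symplectic of order $5$) of a K3 surface in any characteristic $p\ne 2,3,5$, including $p=0$.

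For the ``only if'' direction, let $g\in\Aut(X)$ have order $N$. Global Torelli (valid also in the non-projective case) implies that $g^*$ acts faithfully on $H^2(X,\bbZ)$, so its characteristic polynomial on $H^2(X,\bbQ)$ factors as
$$\mathrm{char}(g^*)=\prod_d \Phi_d^{m_d},\qquad \sum_d m_d\,\phi(d)=22,\qquad N=\mathrm{lcm}\{d:m_d>0\}.$$
Let $h$ be the order of the scalar by which $g$ acts on $H^{2,0}(X)$ and $k:=N/h$, so that $g^h$ is symplectic of order $k$ and by Nikulin \cite{Nik} $k\le 8$. Three constraints drive the analysis: (i) averaging a K\"ahler class over $\langle g\rangle$ yields a nonzero $g^*$-fixed K\"ahler class in $H^{1,1}(X,\bbR)$, hence $m_1\ge 1$; (ii) the primitive $h$-th root $\zeta_h$ is an eigenvalue of $g^*$, hence $m_h\ge 1$; and (iii) the invariant rank of $(g^*)^h$ on $H^2$ equals $\sum_{d\mid h}m_d\,\phi(d)$ and, by Nikulin--Mukai, is determined by the symplectic order $k$. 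A finite case analysis over pairs $(h,k)$ with $k\le 8$ and $m_h\phi(h)\le 22$ then yields $\phi(N)\le 20$ in every admissible case.

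The main obstacle lies in two places. First, for non-projective K3s (in particular $\rho(X)=0$) no ample class is available, so the $\Phi_1$-factor in (i) must be produced from an averaged K\"ahler class in $H^{1,1}(X,\bbR)$ rather than from a lattice vector, as in the positive-characteristic argument. Second, constraint (iii) is essential: without it, the rank bound together with (i),(ii) alone admits would-be orders such as $N=11\cdot 7=77$ via the char poly $(x-1)^6\Phi_7\Phi_{11}$ of total degree $22$; but (iii) forces $\sum_{d\mid 11}m_d\,\phi(d)=r_7=4$ (where $r_7$ is the invariant rank of a symplectic order-$7$ action), contradicting $m_{11}\ge 1$. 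Running this case analysis for every $(h,k)$ with $k\le 8$ is the technical core of the proof.
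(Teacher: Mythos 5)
Your ``if'' direction matches the paper exactly (Xiao/Machida--Oguiso for $N\neq 60$, Example \ref{5.12} for $N=60$), and your framework for the ``only if'' direction --- decompose $N=hk$ with $g^h$ symplectic of order $k\le 8$, factor the characteristic polynomial on $H^2(X,\bbZ)$ into cyclotomics, and constrain it by an invariant class and by the eigenvalue on $H^{2,0}$ --- is also the paper's, which reduces the complex case verbatim to the tame-case analysis of Section 4 with $H^2(X,\bbZ)$ in place of $H^2_{\rm et}(X,\bbQ_l)$. But your claim that constraints (i)--(iii) close the case analysis is not correct, and this is a genuine gap. Take $N=84$ with $(h,k)=(42,2)$: a symplectic involution has invariant rank $14$ and eigenvalue list $[1.14,\,-1.8]$, and the characteristic polynomial $(x-1)^2\Phi_{42}\Phi_4^{4}$ has degree $22$, satisfies $m_1\ge 1$, $m_{42}\ge 1$, $\sum_{d\mid 42}m_d\phi(d)=14$, and reproduces the full eigenvalue list of $g^{42}$ --- nothing in (i)--(iii) excludes it. The same happens for $N=72$ with $(h,k)=(36,2)$ or $(24,3)$, and for the orders $2.30$, $2.24$, $2.20$, $2.18$, $4.9$, $4.7$, $2.14$, $6.5$, $4.10$ treated in Lemmas \ref{pn=8}--\ref{pn=4}.

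What is missing is the paper's Lemma \ref{nsym4}: since $g^h$ is symplectic, ${\rm Fix}(g^i)\subseteq{\rm Fix}(g^h)$ is a finite set for every $i\mid h$, so the (topological) Lefschetz fixed point formula forces $0\le e(g^i)=\Tr(g^{i*}|H^*(X))\le e(g^h)$. These trace inequalities for \emph{intermediate} powers $g^i$ are what eliminate the configurations that survive (i)--(iii); for the $N=84$ example above one computes $\Tr(g^{21*}|H^*(X))=2+(2-12+0)=-8<0$, although ${\rm Fix}(g^{21})$ is contained in the $8$-point set ${\rm Fix}(g^{42})$. In one case (the claim ${\rm ord}(g)\neq 2.14$ in Lemma \ref{pn=6}) even this fails and one must additionally count orbits of $g$ on the $8$ fixed points of the symplectic involution. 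So your outline needs the complex analogues of Proposition \ref{trace} and Lemma \ref{nsym4} as a fourth, indispensable constraint, after which the case analysis is genuinely the longer one of Lemmas \ref{pn>13}--\ref{pn=2} rather than a rank count over pairs $(h,k)$.
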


\begin{remark} By Deligne \cite{De} any K3 surface in
any positive characteristic lifts to characteristic 0. But
automorphisms of K3 surfaces do not in general. J.-P. Serre
\cite{Serre} has proved the following result about lifting to
characteristic 0: if $X$ is a smooth projective variety over an
algebraically closed field $k$ of characteristic $p$ with $H^2(X,
\calO_X)=H^2(X, \Theta_X)=0$, where $\Theta_X$ is the tangent
sheaf of $X$, and if $G\subset$ Aut$(X)$ is a finite tame
subgroup, then the pair $(X, G)$ can be lifted to the ring $W(k)$
of Witt vectors. K3 surfaces do not satisfy the condition and the
lifting theorem does not hold for them.  In fact, two groups in Dolgachev and Keum's classification (Theorem 5.2 \cite{DK2})  of finite groups which
may act tamely and symplectically on a K3 surface in positive
characteristic have orders $> 3,840$,
hence cannot act faithfully on any complex K3 surface. 
\end{remark}

In Section 6, we shall prove the faithfulness of the representation of the
automorphism group ${\rm Aut}(X)$ on the $l$-adic cohomology
$H_{\rm et}^2(X,\bbQ_l)$, $l\neq p$, which is a result of Ogus
(Corollary 2.5 \cite{Ogus}) when either $p\neq 2$ or $X$ is not superspecial.

\begin{theorem}\label{faith} Let $X$ be a K3 surface over  an algebraically closed field $k$ of
characteristic $p> 0$. Let $l\neq p$ be a prime. Then the representation $${\rm Aut}(X)\to
{\rm GL}(H^2_{\rm et}(X,{\bbQ}_l)),\,\,g\mapsto g^*,$$ is faithful. 
\end{theorem}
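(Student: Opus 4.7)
The plan is to reduce the claim to Ogus's crystalline faithfulness theorem (Corollary 2.5 of \cite{Ogus}) and then settle the single exceptional case not covered there, $p=2$ with $X$ superspecial, by a direct geometric argument.

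Let $g\in\Aut(X)$ act trivially on $H^2_{\et}(X,\bbQ_l)$. Since the cycle class map embeds $\NS(X)\otimes\bbQ_l$ equivariantly into $H^2_{\et}(X,\bbQ_l)$, $g$ fixes every divisor class and in particular preserves an ample class. Because $H^0(X,\Theta_X)=0$ on a K3 surface, the automorphism group scheme of a polarized K3 is finite, so $g$ has finite order $n=p^am$ with $\gcd(m,p)=1$. A finite-order element acts semisimply on both $H^2_{\et}(X,\bbQ_l)$ and $H^2_{\crys}(X/W)\otimes_WK$, and the characteristic polynomials of $g^*$ on the two cohomologies coincide: for each $k$ both $\Tr((g^k)^*\mid H^2_{\et})$ and $\Tr((g^k)^*\mid H^2_{\crys})$ equal $\chi_{\et}(X^{g^k})-2$ by Grothendieck--Lefschetz, and Newton's identities recover the characteristic polynomial from these power sums. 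Thus triviality on $H^2_{\et}$ is equivalent to triviality on $H^2_{\crys}$, and Ogus's theorem then yields $g=\mathrm{id}$ whenever $p\neq 2$ or $X$ is not superspecial.

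The main obstacle is the remaining case $p=2$ with $X$ superspecial. I would handle it via the tame-wild decomposition $g=g_tg_w$, showing each factor is the identity. For the tame part $g_t$, Grothendieck--Lefschetz gives $\chi_{\et}(X^{g_t})=24$; meanwhile, triviality on $H^2_{\crys}(X/W)$ descends through the Hodge filtration to triviality on $H^0(X,\Omega^2_X)$, so $g_t$ is symplectic. The classification of tame symplectic actions on K3 surfaces in positive characteristic (Theorem 5.2 of \cite{DK2}) then bounds $\chi(X^{g_t})$ by $8$ unless $g_t=\mathrm{id}$, forcing $g_t=\mathrm{id}$. Hence $g$ has $2$-power order, and Nygaard's inequality \cite{Ny} with $\sigma(X)=1$ forces $g$ to be symplectic as well. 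I would conclude by invoking the explicit description of the essentially unique superspecial K3 surface in characteristic $2$ together with the classification of wild symplectic $p$-subgroups acting on K3 surfaces from \cite{DK2}: any non-trivial such $2$-element would have to act non-trivially on $\NS(X)\subset H^2_{\et}(X,\bbQ_l)$, contradicting the assumption.
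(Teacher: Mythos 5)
Your overall strategy --- outsource everything to Ogus's crystalline faithfulness theorem and treat the single excluded case ($p=2$, $X$ superspecial) by hand --- is legitimate and genuinely different from the paper, which gives a self-contained geometric proof (reduction to order $p$, the trichotomy for $X/\langle g\rangle$ from \cite{DK1}, and Rudakov--Shafarevich for the supersingular branch). However, two points need attention. The first is repairable: your justification that the characteristic polynomials of $g^*$ on $H^2_{\rm et}$ and on $H^2_{\crys}$ coincide rests on the identity $\Tr((g^k)^*|H^*(X))=e(X^{g^k})$, which is \emph{false} for wild powers $g^k$ --- this is precisely why the paper must invoke Deligne--Lusztig (Proposition \ref{DL}); for instance an order-$11$ automorphism has $\Tr(u^*|H^*(X))=2$ while $e(X^u)=1$. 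The conclusion you want is nevertheless true and is exactly Proposition \ref{integral}(1) (Illusie, 3.7.3 of \cite{Ill}), so this step survives by citing that instead. (Likewise, the tame-part bound you want is the fixed-point count of Theorem 3.3/Proposition 4.1 of \cite{DK2}, i.e.\ Proposition \ref{sym}, rather than the group classification of Theorem 5.2.)

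The genuine gap is the final step. Having reduced to $p=2$, $X$ superspecial, and $g$ a symplectic element of $2$-power order acting trivially on $\NS(X)$, you propose to conclude via ``the classification of wild symplectic $p$-subgroups acting on K3 surfaces from \cite{DK2}.'' No such classification exists for $p=2$: \cite{DK2} classifies \emph{tame} symplectic groups, wild symplectic groups are classified only for $p=11$ (in \cite{DK3}), and the present paper states explicitly that symplectic wild groups have not been classified in characteristic $p\le 7$. Moreover, since a superspecial surface is supersingular, $\NS(X)\otimes\bbQ_l= H^2_{\rm et}(X,\bbQ_l)$, so the assertion that ``any non-trivial such $2$-element would have to act non-trivially on $\NS(X)$'' is exactly the faithfulness statement you are trying to prove; as written the step is circular. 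What actually closes this case is the theorem of Rudakov--Shafarevich (\cite{RS}, Sec.\ 8, Prop.\ 3) that $\Aut(X)$ acts faithfully on $\NS(X)$ for a supersingular K3 surface --- the same ingredient the paper uses in the corresponding branch of its own proof. With that citation in place of the non-existent classification your argument closes (and, once supersingularity is noted, the tame/wild decomposition and the appeal to Nygaard become unnecessary).
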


In fact, Ogus proved under the condition the
faithfulness of the representation on the crystalline cohomology
$H_{\crys}^2(X/W)$, and it is known \cite{Ill} that the
characteristic polynomial of any automorphism has integer
coefficients which do not depend on the choice of cohomology.

An wild automorphism of a K3 surface exists only in
characteristic $p\le 11$ (Theorem 2.1 \cite{DK2}).

\begin{theorem} \label{main-wild} Let $X$ be a K3 surface over an algebraically closed field $k$ of
characteristic $p=11$, $7$ or $5$. Let $N$ be the order of an
automorphism of finite order of $X$. Assume that the order $N$ is
divisible by $p$. Then $$N = pn$$ where $n=1,\,2,\,3,\,6$ if
$p=11$, $n=1,\,2,\,3,\,4,\,6$ if $p=7$, $n=1,\,2,\,3,\,4,\,6,\, 8$ if $p=5$. All these orders
are supported by examples $($Examples \ref{exam11}, \ref{exam7},
\ref{exam5} and \ref{exam5-2}$)$.
\end{theorem}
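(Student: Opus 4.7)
My plan is to decompose an automorphism $g$ of order $N = pn$ (with $\gcd(p,n)=1$) into its commuting tame part $g^p$ (of order $n$) and wild part $g^n$ (of order $p$), assemble the constraints from the previous results, and then sharpen them with a combination of the $l$-adic Lefschetz trace formula and the known structure of the fixed locus of each component.

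First I would apply Theorem~2.1 of \cite{DK2} to $g^n$: the existence of a wild order-$p$ automorphism of a K3 surface forces $p \le 11$, so only $p \in \{5,7,11\}$ must be treated. Next, Theorem~\ref{main-tame} applied to $g^p$ gives $\phi(n) \le 20$, reducing $n$ to a finite list. By Theorem~\ref{faith}, $g^{*}$ acts on $H^2_{\et}(X, \bbQ_l)$ ($l \ne p$) with exact order $N$, so its characteristic polynomial $\chi_g \in \bbZ[t]$, a monic degree-$22$ polynomial, is a product of cyclotomic polynomials $\Phi_d$ with $d \mid N$ and with the least common multiple of the appearing $d$'s equal to $N$. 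For each candidate $n$ I would enumerate the possible such factorizations of $\chi_g$.

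The discriminating step is then to confront each factorization with geometric data via the Lefschetz trace formula
\[
\sum_i (-1)^i \Tr((g^k)^{*} \mid H^i_{\et}(X, \bbQ_l)) = e(X^{g^k})
\]
for several $k$, combined with the fixed-point classification of wild $p$-cyclic automorphisms from \cite{DK2} (which tightly constrains $e(X^{g^n})$ and the structure of $X^{g^n}$, typically through the Artin--Schreier quotient $X \to X/\langle g^n \rangle$) and the Nikulin-type description of the tame power $g^p$ and its induced action on $X^{g^n}$. Inconsistent candidate factorizations are discarded, and the surviving orders should be exactly those listed in the theorem.

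The main obstacle will be the boundary exclusions $N = 44$ at $p = 11$ and $N = 60$ at $p = 5$. These require especially tight control because the tame component is near-extremal: for $N = 60$ the tame part has non-symplectic order $12$, and the unique K3 surface supporting a tame order-$60$ automorphism (noted after Theorem~\ref{main-tame}) would be the only candidate, so one must show it cannot support a compatible wild order-$5$ automorphism; for $N = 44$ one must rule out both $\chi_g = \Phi_{44}\cdot Q$ with $\deg Q = 2$ and decompositions of the shape $\chi_g = \Phi_4\cdot\Phi_{11}\cdot R$ simultaneously. The forward direction---realizing every surviving order---is supplied by the explicit constructions in Examples~\ref{exam11}, \ref{exam7}, \ref{exam5}, and \ref{exam5-2}.
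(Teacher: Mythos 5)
Your skeleton---decompose $g$ into the tame power $g^p$ and the wild power $g^n$, invoke Theorem~2.1 of \cite{DK2} to force $p\le 11$, use Theorem~\ref{faith} to constrain the characteristic polynomial of $g^*$ on $H^2_{\rm et}(X,\bbQ_l)$ to a product of cyclotomic polynomials $\Phi_d$ with ${\rm lcm}(d)=N$, and then eliminate factorizations by trace computations---is exactly the skeleton of the paper's argument. But the plan has a genuine gap at its discriminating step. The Lefschetz formula $\sum_i(-1)^i\Tr((g^k)^*\mid H^i)=e(X^{g^k})$ that you propose to apply ``for several $k$'' is only valid for \emph{tame} powers $g^k$ (Proposition~\ref{trace}); for powers of order divisible by $p$ one must use the Deligne--Lusztig formula $\Tr(g^{k*}\mid H^*(X))=\Tr(u^*\mid H^*(X^s))$ (Proposition~\ref{DL}), and---as the paper stresses---even that fails precisely in the hardest exclusions, because the trace of $u^*$ on $H^1$ of a higher-genus fixed curve is not determined by the eigenvalue data (e.g.\ it may equal $-2$ on a genus-$10$ curve). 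Every one of the delicate cases ($n=4$ at $p=11$; $n=4,12$ at $p=7$; $n=8,12$ and the one-fixed-point cases at $p=5$) requires arguments beyond trace bookkeeping: identifying a $g$-invariant genus-one fibration, using Lemma~\ref{P1} to show the tame part acts trivially on the base $\bbP^1$ and deriving a contradiction on the quotient ruled surface, orbit-counting on configurations of rational curves against $\dim H^2_{\rm et}(X,\bbQ_l)^{g^k}$, the rank bound of Lemma~\ref{1ptK3}, and (for $n=11$ at $p=5$) a numerical computation of $K_Y^2$ on a resolution of the quotient by its tame quotient singularities. None of these appear in your plan, and the cyclotomic-factorization sieve alone does not close those cases. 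Relatedly, the ``fixed-point classification of wild $p$-cyclic automorphisms'' you cite from \cite{DK2} is not sharp enough as it stands: the paper must first refine it (Propositions~\ref{7} and~\ref{5}) to pin down the possible eigenvalue lists $[u^*]$ and the fibre types ($II^*$ vs.\ $III$, $IV$ vs.\ $III^*$, etc.) before the sieve can even be set up.

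Your remark on the boundary case $N=60$ at $p=5$ is also off target: an automorphism of order $60$ in characteristic $5$ is \emph{wild}, so the uniqueness statement for K3 surfaces with a \emph{tame} order-$60$ automorphism (proved only for $p\neq 2,3,5$) says nothing here. The paper instead excludes $n=12$ by running the case analysis of Lemmas~\ref{kod1}, \ref{2pts} and \ref{1pt} on the possible fixed loci of $u=g^{12}$. The exclusion of $N=44$ at $p=11$ (Lemma~\ref{44}) likewise is not a matter of ruling out two factorizations of $\chi_g$ in parallel: faithfulness forces $[g^*]=[1,\pm1,\zeta_{44}{:}20]$ immediately, and the real work is the geometric dichotomy for ${\rm Fix}(s^2)$ via Lemma~\ref{nsym2} and the elliptic-fibration argument in the case ${\rm Fix}(s^2)=C_0\cup C_{10}$, where Deligne--Lusztig is inconclusive.
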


The proof of Theorem \ref{main-wild} is given in theorems
\ref{11main}, \ref{7main} and \ref{5main}, and is based on the
faithfulness of Ogus (Theorem \ref{faith}) and the results on wild
$p$-cyclic actions on K3 surfaces  \cite{DK1}, \cite{DK2},
\cite{DK3}. A result of Deligne and Lusztig (Theorem 3.2 \cite{DL}), an extension of the Lefschetz fixed point formula to the case of wild automorphisms, is quite useful to rule out certain possibilities for orders, but not always. Many possible orders 
cannot be ruled out by Deligne and Lusztig, and require different arguments.

In any characteristic $p\ge 0$ there are automorphisms of infinite
order. For example, an elliptic K3 surface with Mordell-Weil rank
positive always admits automorphisms of infinite order, namely,
the automorphisms induced by the translation by a non-torsion
section of the Mordell-Weil group of the Jacobian fibration. Such
automorphisms are symplectic. Supersingular K3 surfaces (Ito
\cite{Ito}) and Kummer's quartic surfaces in characteristic $p\neq
2$ (Ueno \cite{Ueno86}) also admit symplectic automorphisms of
infinite order. Non-symplectic automorphisms of infinite order
also exist in characteristic $p\neq 2$, e.g., on a generic
Kummer's quartic surface the composition of an odd number of, more
than one, projections \cite{K}, where a projection is the
involution obtained by projecting the surface from a node onto
$\bbP^2$.

For a finite group $G$ acting on a K3 surface $X$, we denote by $G^*$ the image 
$$G^*\subset {\rm GL}(H^0(X, \Omega^2_X))\cong k^{\times}.$$
It is a cyclic group and its order $|G^*|$ is called a \textit{transcendental value}. 

Main Theorem together with the result of \cite{K2}, also with Proposition \ref{exist} and Example \ref{p2}, determines the set ${\rm TV}_{p}$ of all transcendental values in characteristic $p$.

\begin{corollary} In each characteristic $p>0$
$${\rm TV}_{p}=\{N\,|\, N\,{\rm is\,a\,positive\,
integer},\,p\nmid N,\,N\neq 60,\,\phi(N)\le 20 \}.$$
\end{corollary}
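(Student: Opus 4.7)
The plan is to prove both inclusions of the stated set equality, leveraging the Main Theorem, the existence results (Proposition \ref{exist} and Example \ref{p2}), and Keum's uniqueness result on order-60 K3 surfaces.

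For the containment $\supseteq$, given $N$ with $p \nmid N$, $N \neq 60$ and $\phi(N) \le 20$, it suffices to exhibit a single K3 surface $X/k$ in characteristic $p$ carrying a purely non-symplectic automorphism $g$ of order $N$, since then the cyclic group $G := \langle g \rangle$ satisfies $|G^*| = N$. For $p > 2$ such a $g$ is produced by reducing the complex Machida--Oguiso examples modulo $p$ (Proposition \ref{exist}), which is legitimate precisely because $\gcd(p, N) = 1$; for $p = 2$ one invokes Example \ref{p2}.

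For the reverse containment $\subseteq$, suppose $|G^*| = N$ for some finite $G \le {\rm Aut}(X)$ acting on a K3 surface $X$ in characteristic $p$. Pick $g \in G$ whose image $g^*$ generates $G^*$, so that $|g^*| = N$. The condition $p \nmid N$ is automatic because $G^*$ is a finite subgroup of $k^\times$ in characteristic $p$. To obtain $\phi(N) \le 20$, decompose the cyclic group $\langle g \rangle = \langle g_{p'}\rangle \times \langle g_p\rangle$ into its prime-to-$p$ and $p$-primary parts; the image $g_p^*$ is a $p$-power root of unity in $k^\times$ and hence trivial, so $(g_{p'})^* = g^*$ has order $N$, which implies $N \mid |g_{p'}|$. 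Since $g_{p'}$ is a tame automorphism, the Main Theorem gives $\phi(|g_{p'}|) \le 20$, and $N \mid |g_{p'}|$ yields $\phi(N) \le \phi(|g_{p'}|) \le 20$.

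The main obstacle is the exclusion of $N = 60$. If $N = 60$, then $p \nmid 60$ forces $p \notin \{2, 3, 5\}$. The combination $60 \mid |g_{p'}|$ and $\phi(|g_{p'}|) \le 20$ forces $|g_{p'}| = 60$, since the next multiple $120$ already has $\phi(120) = 32$. Thus $g_{p'}$ is a tame automorphism of order $60$ with $|(g_{p'})^*| = 60$, i.e.\ purely non-symplectic of order $60$. But by Keum \cite{K2}, any tame order-$60$ automorphism of a K3 surface has non-symplectic part of order exactly $12$, a contradiction. This rules out $N = 60$ and completes the proof.
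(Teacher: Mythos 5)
Your proof is correct and follows essentially the same route the paper intends: the paper gives no detailed argument but derives the corollary from exactly the ingredients you use (Proposition \ref{exist} and Example \ref{p2} for existence, the prime-to-$p$ reduction plus the Main Theorem for the bound $\phi(N)\le 20$, and the result of \cite{K2} that a tame order-$60$ automorphism has non-symplectic order $12$ to exclude $N=60$). The only cosmetic point is that ruling out $|g_{p'}|>60$ should invoke that \emph{every} proper multiple of $60$ has $\phi>20$ (immediate from Table \ref{21}), not just the case $120$.
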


Note that 
$$\max{\rm TV}_{p}=\left\{\begin{array}{lll} 66&{\rm if}& p>3,\,\, p\neq 11,\\
54&{\rm if}& p=11,\\
50&{\rm if}& p=3,\\
33&{\rm if}& p=2.\\
\end{array} \right.$$
If a finite group $G$ acts on a supersingular K3 surface with Artin invariant 1, then $|G^*|$ divides $p+1$ (Nygaard \cite{Ny}), so belongs to the subset
$${\rm B}_{p}:=\{N\,|\, N\,{\rm is\,a\,positive\,
integer},\,N\mid (p+1),\,N\neq 60,\,\phi(N)\le 20 \}\subset {\rm TV}_{p}.$$ Define
$$\beta_p:=\max{\rm B}_{p}\le 66.$$
Note that $$\beta_p=p+1\,\,{\rm if}\,p\le 53,\,\, \beta_{59}=30,\,\, \beta_{61}=\beta_{73}=2,\,\, \beta_{67}=34,\,\, \beta_{71}=36,\, {\rm etc}.$$

In \cite{DK2} finite groups of tame symplectic automorphisms of K3 surfaces in
positive characteristic have been classified. The list consists of the groups appearing in the complex case (Mukai's list) and 27 new groups, called exceptional groups. A tame symplectic subgroup $G\subset {\rm Aut}(X)$ is exceptional iff the K3 surface $X$ is supersingular with Artin invariant 1 and the quotient $X/G$ is birational to $X$.
Among Mukai's list the Mathieu group $M_{20}$ has the largest order 960, and among exceptional groups $M_{21}$ has the largest order $20,160=21\times 960=2^6.3^2.5.7$. 

Recall that in char $p>11$ no K3 surface admits a wild automorphism. Main Theorem, with the results of \cite{DK2},  \cite{DK3}, \cite{Ny}, yields the following bounds:

\begin{theorem} Let $G$ be a finite group acting on a K3 surface $X$ in characteristic $p$. If $p>11$, then
$$|G|\le\left\{\begin{array}{ll} \beta_p |M_{21}|&{\rm if}\,\, G_{Sym}\,\,{\rm is\,\,exceptional},\\
66 |M_{20}|&{\rm otherwise} \\
\end{array} \right.$$ where $G_{Sym}$ is the maximal symplectic subgroup of $G$.
If $p=11$, then $$|G|\le\beta_{11} |M_{22}|=12\times 443,520.$$
\end{theorem}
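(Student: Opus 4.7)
The plan is to split the bound on $|G|$ via the canonical short exact sequence
\[1 \to G_{Sym} \to G \to G^* \to 1,\]
which gives $|G| = |G_{Sym}| \cdot |G^*|$, and to estimate each factor separately: the symplectic part from the classification of symplectic actions in \cite{DK2} and \cite{DK3}, and the transcendental part from the Main Theorem together with Nygaard's bound \cite{Ny}.

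For $p > 11$, I would first invoke Theorem~2.1 of \cite{DK2} to rule out wild automorphisms, so every element of $G$ is tame; in particular $G_{Sym}$ is a tame symplectic subgroup. The Dolgachev--Keum classification then forces either $G_{Sym}$ to sit inside a group on Mukai's list, giving $|G_{Sym}| \le |M_{20}|$, or $G_{Sym}$ to be one of the $27$ exceptional groups, giving $|G_{Sym}|\le |M_{21}|$ and forcing $X$ to be supersingular with $\sigma(X)=1$. For the transcendental factor, since $G^* \subset k^\times$ is cyclic, any lift $g \in G$ of a generator has finite order at most $66$ by the Main Theorem, so $|G^*|\le 66$. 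In the exceptional sub-case, Nygaard's bound upgrades this to $|G^*|\mid p+1$; combined with $\phi(|G^*|)\le 20$ and $|G^*|\ne 60$, this forces $|G^*|\le\beta_p$. Multiplying produces the two claimed dichotomy bounds.

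For $p=11$, wild elements of order $11$ appear, and they are automatically symplectic since all $11$-th roots of unity in $k$ equal $1$; hence any wild element of $G$ already lies in $G_{Sym}$. Consequently $G_{Sym}$ is either tame, in which case \cite{DK2} gives $|G_{Sym}|\le |M_{21}|$ and $\max{\rm TV}_{11}=54$ yields $|G|\le 20160\cdot 54<12\cdot|M_{22}|$, or wild, in which case the Mathieu bound of \cite{DK3} gives $|G_{Sym}|\le|M_{22}|$. In the latter sub-case $X$ is Artin-supersingular by \cite{DK1}, and the structural results of \cite{DK3} pin down $\sigma(X)=1$ whenever the wild part is large enough to matter; Nygaard then gives $|G^*|\mid 12$, hence $|G|\le\beta_{11}|M_{22}|$ as desired.

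The most delicate step is the $p=11$ wild sub-case: because Nygaard's divisibility $|G^*|\mid 11^{\sigma(X)}+1$ degrades rapidly with $\sigma(X)\in\{1,\ldots,10\}$, one cannot simply combine the worst-case bounds on $|G_{Sym}|$ and on $|G^*|$ independently. I expect the main technical obstacle to be the simultaneous control of these two parameters: specifically, ruling out the intermediate configurations with $G_{Sym}$ wild but of sub-maximal size and $\sigma(X)>1$, by feeding the structural constraints of \cite{DK3} into the Nygaard divisibility to show no such configuration can produce a group of size exceeding $12\cdot|M_{22}|$.
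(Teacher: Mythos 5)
Your proposal follows essentially the same route as the paper: factor $|G|=|G_{Sym}|\cdot|G^*|$, bound $|G_{Sym}|$ via the classifications of \cite{DK2} (tame case) and \cite{DK3} (wild case, $p=11$), and bound $|G^*|$ via the Main Theorem together with Nygaard's divisibility \cite{Ny} in the exceptional/supersingular situation. Two small corrections are worth recording. First, your claim that every wild element of $G$ is automatically symplectic is false as stated: an element of order $22$ can act as $-1$ on $H^0(X,\Omega^2_X)$ (cf.\ Example \ref{exam11}); what is true, and all the argument needs, is that every element of order a power of $11$ is symplectic, so $G$ contains a wild element iff $G_{Sym}$ does. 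Second, the ``delicate'' $p=11$ sub-case you flag at the end is in fact a non-issue: if $G_{Sym}$ is wild but not $M_{22}$, then by the list in \cite{DK3} one has $|G_{Sym}|\le |M_{11}|=7920$, and even the crude bound $54\cdot 7920$ is far below $12\,|M_{22}|$; while if $G_{Sym}=M_{22}\supset M_{21}$, an exceptional group, the surface is forced to be supersingular with Artin invariant $1$, so Nygaard gives $|G^*|\mid 12$ directly. No simultaneous control of $\sigma(X)$ and the size of the wild symplectic group is required --- this is exactly the one-line argument the paper gives.
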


 The above bound in characteristic $p=11$ is obtained from the classification in \cite{DK3} of all finite symplectic wild groups. The list consists of 5 groups: ${\bbZ}_{11}$, $11:5$, $L_2(11)$, $M_{11}$, $M_{22}$. Since $M_{22}$ contains $M_{21}$, an exceptional group, the K3 surface must be supersingular with Artin invariant 1. Finally, $54 |M_{20}|<12 |M_{22}|$. In characteristic $p\le 7$, symplectic wild groups have not been classified. 

\begin{remark}\label{beta}  The bounds do not seem to be optimal for general $p$. But for some $p$,  $\beta_p$ is very small, so the corresponding bound is close to optimal.
\end{remark}

\begin{remark}\label{EC}  (1) It is well known that any group automorphism
of an elliptic curve $E$ is of finite order and acts faithfully on
the first cohomology of $E$. The set of all possible orders of
group automorphisms of elliptic curves does not depend on the
characteristic and is given by
$$\{1,2,3,4,6\}=\{N\,|\, N\,{\rm is\,a\,positive\, integer},\,\phi(N)\le b_1=2\},$$ where $b_1$ is the first Betti number  of an elliptic curve.
But in characteristic 2 and 3, more groups may act on an elliptic
curve. In fact, the quotient group ${\rm Aut}(E)/{\rm Aut}_0(E)$
where ${\rm Aut}_0(E)\cong E$ is the identity component depends on
the $j$-invariant of $E$ and is isomorphic to the cyclic group
${\bbZ}_2$, ${\bbZ}_4$ or ${\bbZ}_6$ in characteristic $>3$, to
${\bbZ}_2$ or ${\bbZ}_3\rtimes {\bbZ}_4$ in characteristic $3$, to
${\bbZ}_2$ or $Q_8\rtimes {\bbZ}_3$ in characteristic $2$ (see
\cite{Silverman}).

(2)  Unlike the elliptic curve case there are abelian varieties
$X$ with an automorphism $g$ that acts faithfully on the first
cohomology of $X$, but $\phi({\rm ord}(g))> b_1(X)$. For example
the $n$-fold product $E^n$ of a general elliptic curve $E$ with
itself has ${\rm Aut}(E^n)/{\rm Aut}_0(E^n)=GL_n(\bbZ)$ where
${\rm Aut}_0(E^n)\cong E^n$ is the identity component. Take $n=10$
and $$g=(g_5, g_7)\in GL_4(\bbZ)\times GL_6(\bbZ)\subset
GL_{10}(\bbZ),$$ where
\begin{displaymath}
g_q = \left( \begin{array}{ccccc}
0 & 0 & \cdots & \cdots &-1 \\
1 & 0 & \cdots & \cdots& -1 \\
0& 1 & \cdots  & \cdots&-1  \\
\vdots & \vdots & \ddots & \vdots & \vdots\\
0& 0 & \cdots &1 & -1 \\
\end{array} \right)\in GL_{q-1}(\bbZ)
\end{displaymath}
is an element of order $q$. Then ${\rm ord}(g)=35$ and $\phi({\rm
ord}(g))> b_1(E^{10})=20$.

(3) K3 surfaces have the second Betti number $b_2(X)=22$. Note that
$$\phi(N)\le 22\Longleftrightarrow \phi(N)\le 20.$$
\end{remark}

\bigskip\noindent{\bf Question.}
K3 surfaces in characteristic  $\neq 2, 3$ and elliptic
curves in any characteristic satisfy the inequality $$\phi({\rm ord}(g^*|H^j(X)))\le b_j(X)$$ for
any automorphism $g$ of the variety $X$ whose induced action $g^*|H^j(X)$  on the $j$-th cohomology is of finite order. 
Some abelian varieties do not satisfy the property. Enriques surfaces probably do. Do Calabi-Yau 3-folds
so?

\bigskip\noindent{\bf Notation}

\medskip
\begin{itemize}
\item ${\rm NS}(X)$ : the N\'eron-Severi group of a smooth projective variety $X$;
\medskip
\item $X^g={\rm Fix}(g)$ : the fixed locus of an automorphism $g$ of $X$;
\medskip
\item $e(g):=e({\rm Fix}(g))$, the Euler characteristic of ${\rm Fix}(g)$ for $g$ tame;
\medskip
\item $\Tr(g^*|H^*(X)):=\sum_{j=0}^{2\dim X} (-1)^j\Tr (g^*|H^j_{\rm et}(X,{\bbQ}_l))$.
 
\medskip\noindent
For an automorphism $g$ of a K3 surface $X$,
\end{itemize}
\begin{itemize}
\item ${\rm ord}(g)=m.n$ : $g$ is of order $mn$ and the natural homomorphism $\langle g\rangle\to {\rm GL}(H^0(X, \Omega^2_X))\cong k^*$ has kernel of order $m$ and image of order $n$;
\medskip
\item $[g^*]=[\lambda_1, \ldots, \lambda_{22}]$ : the list of the eigenvalues of $g^*|H^2_{\rm et}(X,{\bbQ}_l)$.
\medskip
\item $\zeta_a$ : a primitive $a$-th  root of unity in $\overline{\bbQ_l}$.
\medskip
\item $\phi$ : the Euler function.
\medskip
\item $[\zeta_a:\phi(a)]\subset [g^*]$ : all primitive $a$-th  roots of unity appear in $[g^*]$ where $\phi(a)$ indicates the number of
them.
\medskip
\item $[\lambda.r]\subset [g^*]$ : $\lambda$ repeats $r$ times in
$[g^*]$.
\medskip
\item $[(\zeta_a:\phi(a)).r]\subset [g^*]$ : the list $\zeta_a:\phi(a)$ repeats $r$ times in
$[g^*]$.
\medskip
\item For $b$ odd,  $\pm \zeta_{b}:\phi(b)=\zeta_{b}:\phi(b)$ or $\zeta_{2b}:\phi(b)$.
\end{itemize}

\section{Preliminaries}

\begin{proposition}\label{integral} Let $X$ be a projective variety over an algebraically closed field $k$ of
characteristic $p> 0$. Let $g$ be an automorphism of $X$. Let $l\neq p$. Then
the following hold true.
\begin{enumerate}
\item $($3.7.3 \cite{Ill}$)$ The characteristic polynomial of
$g^*|H_{\rm et}^j(X,\bbQ_l)$ has integer coefficients for each
$j$. Furthermore  the characteristic polynomial  does not depend on the choice of  cohomology, $l$-adic or crystalline.
\item  If $g$ is of finite order, then $g$ has an
invariant ample divisor, hence  $g^*|H_{\rm et}^2(X,\bbQ_l)$ has
$1$ as an eigenvalue.
 \item  If $X$ is a K3 surface and $g^*$
acts trivially on $H_{\rm et}^2(X,\bbQ_l)$, then $g^*$ acts
trivially on the space of regular 2-forms  $H^0(X,\Omega_X^2)$.
\item  If $X$ is a K3 surface, $g$ is tame and $g^*|H^0(X,\Omega_X^2)$ has
$\zeta_n\in k$ as an eigenvalue, then $g^*|H_{\rm et}^2(X,\bbQ_l)$ has $\zeta_n\in \overline{\bbQ_l}$ as an
eigenvalue.
\end{enumerate}
\end{proposition}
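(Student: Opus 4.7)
The plan is to treat the four items in order. Parts (1) and (2) dispatch quickly, while (3) and (4) are reductions modulo $p$ that pass through crystalline and de Rham cohomology.

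Part (1) I would simply cite: it is 3.7.3 of \cite{Ill}, which gives both the integrality of the characteristic polynomial of $g^*$ on $H^j_{\text{et}}(X,\bbQ_l)$ and its independence of the choice of Weil cohomology. Part (2) is the standard averaging trick: given $g$ of finite order $n$, I would take any ample line bundle $L$ and form $D=\sum_{i=0}^{n-1}(g^i)^* L$. Ampleness is preserved by pullback and by sum, so $D$ is ample, and by construction $g^*D=D$. Its cycle class in $H^2_{\text{et}}(X,\bbQ_l)$ is then a nonzero $g^*$-fixed vector, so $1$ is an eigenvalue of $g^*$.

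For part (3), my approach is to transfer triviality from the $l$-adic side to the de Rham side via crystalline cohomology, and then restrict to the Hodge piece. If $g^*$ is the identity on $H^2_{\text{et}}(X,\bbQ_l)$, then by (1) the characteristic polynomial of $g^*$ on $H^2_{\text{crys}}(X/W)$ is $(t-1)^{22}$. Reducing modulo $p$ via $H^2_{\text{dR}}(X/k)=H^2_{\text{crys}}(X/W)\otimes_W k$, this is also the characteristic polynomial of $g^*$ on $H^2_{\text{dR}}(X/k)$ in $k[t]$, so every eigenvalue on $H^2_{\text{dR}}$ equals $1$. Because the Hodge-de Rham spectral sequence of a K3 surface degenerates and the Hodge filtration is functorial, $H^0(X,\Omega_X^2)$ sits inside $H^2_{\text{dR}}(X/k)$ as a $g^*$-stable one-dimensional subspace, so the scalar by which $g^*$ acts on it is an eigenvalue of $g^*$ on $H^2_{\text{dR}}$, hence equals $1$.

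Part (4) refines the same mechanism using tameness. Since $g$ has finite order $n$ coprime to $p$, the operator $g^*$ is semisimple on every cohomology theory with eigenvalues that are $n$-th roots of unity, and by (1) the integer characteristic polynomial on $H^2_{\text{crys}}(X/W)$ coincides with the one on $H^2_{\text{et}}(X,\bbQ_l)$ and factors as a product of cyclotomic polynomials $\prod_i \Phi_{d_i}(t)$. If $\zeta_n\in k$ is an eigenvalue of $g^*$ on the one-dimensional space $H^0(X,\Omega_X^2)\subset H^2_{\text{dR}}(X/k)$, then $\Phi_n(t)$ divides the reduction-mod-$p$ characteristic polynomial on $H^2_{\text{dR}}$; since $(n,p)=1$, the polynomial $\Phi_n \bmod p$ is separable, which forces $\Phi_n$ to appear already in the integer characteristic polynomial on $H^2_{\text{crys}}$, and hence on $H^2_{\text{et}}$, exhibiting $\zeta_n\in\overline{\bbQ_l}$ as an eigenvalue.

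The main obstacle is the bookkeeping between the three cohomology theories in (3) and (4): one needs both that the integer characteristic polynomial transfers from crystalline to de Rham by reduction modulo $p$, and that the Hodge piece $H^0(X,\Omega_X^2)$ is preserved by $g^*$ inside $H^2_{\text{dR}}$. Both facts are standard but rely on the degeneration of the Hodge-de Rham spectral sequence for K3 surfaces together with the functoriality of the Hodge filtration under pullback.
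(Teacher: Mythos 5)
Your proposal is correct and follows essentially the same route as the paper: (1) by citation, (2) by averaging an ample class, (3) by passing from \'etale to crystalline to de Rham cohomology and using the $g^*$-stable Hodge piece $F^2=H^0(X,\Omega_X^2)$, and (4) by matching roots of the integral (cyclotomic) characteristic polynomial on crystalline cohomology with their reductions mod $p$ on de Rham cohomology, using tameness to rule out interference from $p$-power orders. The only cosmetic differences are that you take the cycle class directly in $H^2_{\rm et}$ in (2) rather than via the crystalline Chern class, and in (4) you argue via the cyclotomic factorization rather than lifting the individual eigenvalue as the paper does.
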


\begin{proof}
(2) For any ample divisor $D$ the sum $\sum g^{i}(D)$ is $g$-invariant.  A $g^*$-invariant ample line bundle gives a $g^*$-invariant vector in the 2nd crystalline cohomology $H_{\rm crys}^2(X/W)$ under the Chern class map 
$$c_1: \Pic(X)\to H_{\rm crys}^2(X/W).$$ Thus $1$ is an eigenvalue of
$g^*|H_{\rm crys}^2(X/W)$. Now apply (1). Here $W=W(k)$ is the ring of Witt vectors. See \cite{Ill} for the  crystalline cohomology.

(3) Consider the representation of $\langle g\rangle$ on the
second crystalline cohomology $H_{\crys}^2(X/W)$. The quotient module
$$H_{\crys}^2(X/W)/pH_{\crys}^2(X/W)$$ is a finite dimensional
$k$-vector space isomorphic to the algebraic de Rham cohomology
$H_{\DR}^2(X)$. Thus  the representation of $\langle g\rangle$ on
 $H_{\DR}^2(X)$ is also trivial. It is known that the Hodge to de Rham spectral sequence  $$E_1^{t,s}:=H^s(X, \Omega_X^t)\Rightarrow H_{\DR}^*(X)$$ degenerates at $E_1$, giving
 the Hodge filtration on $H_{\DR}^*(X)$  and the following canonical exact sequences:
$$0\to F^1\to F^0=H_{\DR}^2(X)\to H^2(X, \mathcal{O}_X)\to 0$$
 $$0\to F^2=H^0(X,\Omega_X^2)\to F^1\to  H^1(X,\Omega_X^1)\to 0.$$ In particular $g^*$
acts trivially on the space of regular 2-forms  $H^0(X,\Omega_X^2)$.

(4) Since $g^*|H^0(X,\Omega_X^2)$ has
$\zeta_n\in k$ as an eigenvalue, so does $g^*|H_{\DR}^2(X)$. The corresponding eigenvalue of  $g^*|H_{\crys}^2(X/W)$ must be an $np^r$-th root of unity for some $r$, since $n$ is not divisible by $p$. Then $g^{p^r*}|H_{\crys}^2(X/W)$ has an  $n$-th root of unity as an eigenvalue.
Since $g$ is tame, so does $g^{*}|H_{\crys}^2(X/W)$.
\end{proof}

Recall that for a nonsingular projective variety $Z$ in
characteristic $p>0$, there is an exact sequence of
$\bbQ_l$-vector spaces
\begin{equation}\label{trans}
0\to \NS(Z)\otimes \bbQ_l \to  H_{\rm et}^2(Z,\bbQ_l) \to
T_l^2(Z)\to 0
\end{equation}
where $T_l^2(Z) = T_l(\textup{Br}(Z))$ in the standard notation in
the theory of \'etale cohomology. The Brauer group $\Br(Z)$ is
known to be a birational invariant, and it is trivial
when $Z$ is a rational variety.  In fact, one can show that
$$\NS(Z)\otimes \bbQ_l = \Ker (H_{\rm et}^2(Z,\bbQ_l)\to H^2(k(Z),\bbQ_l));$$
$$T_l^2(Z) = \textup{Im}(H_{\rm et}^2(Z,\bbQ_l)\to H^2(k(Z),\bbQ_l)).$$
Here $H^2(k(Z),\bbQ_l) = \underrightarrow{\lim}_U H^2(U,\bbQ_l),$
where $U$ runs through the set of open subsets of $Z$ (see
\cite{Shioda2}).  It is known that the dimension of all
$\bbQ_l$-spaces from above do not depend on $l$ prime to the
characteristic $p$.

\begin{proposition}\label{diminv} In the situation as above, let $g$ be an automorphism of $Z$
of finite order. Assume $l\ne p$. Then the following assertions
are true.
\begin{enumerate}
\item  Both traces of $g^*$ on ${\rm NS}(Z)$ and on $T_l^2(Z)$ are integers.
\item  $\rank~{\rm NS}(Z)^g=\rank~{\rm NS}(Z/\langle  g \rangle).$
\item  $\dim H^2_{\rm et}(Z,{\bbQ}_l)^g=\rank~{\rm NS}(Z)^g+\dim T_l^2(Z)^g$.
\item  If  the minimal resolution $Y$ of $Z/\langle  g \rangle$ has $T_l^2(Y)=0$, then $$\dim H^2_{\rm et}(Z,{\bbQ}_l)^g=\rank~{\rm NS}(Z)^g.$$
\end{enumerate}
The condition of $(4)$ is satisfied if $Z/\langle g\rangle$ is
rational or is birational to an Enriques surface.
\end{proposition}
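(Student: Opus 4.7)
The plan is to establish the four assertions in order, with (3) and (4) following naturally from the $g$-equivariant exact sequence \eqref{trans} once (1) and (2) are in hand. For (1), after killing torsion $\NS(Z)$ is a free $\bbZ$-module on which $g^*$ acts by an integer matrix, so $\Tr(g^*\mid\NS(Z))\in\bbZ$; taking traces in the $\bbQ_l$-form of \eqref{trans} gives
\[
\Tr(g^*\mid T_l^2(Z)) = \Tr(g^*\mid H^2_{\text{et}}(Z,\bbQ_l)) - \Tr(g^*\mid\NS(Z)\otimes\bbQ_l),
\]
and Proposition~2.1(1) makes the first term on the right an integer, so the left side is one as well.

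For (2), I would invoke the finite-quotient identity $\pi^*\pi_* D = |g|\cdot D$ for any $g$-invariant divisor $D$ on $Z$ (valid because over each orbit the pullback recovers the fiber with the correct ramification multiplicities); combined with $\pi_*\pi^* D' = |g|\cdot D'$ on $Z/\langle g\rangle$, this shows $\pi^*\colon \NS(Z/\langle g\rangle)\otimes\bbQ \to \NS(Z)^g\otimes\bbQ$ is an isomorphism, so the two ranks coincide. For (3), I would take $\langle g\rangle$-invariants of the $g$-equivariant sequence \eqref{trans}: because $|\langle g\rangle|$ is invertible in $\bbQ_l$, the group ring $\bbQ_l[\langle g\rangle]$ is semisimple and the invariants functor is exact, producing a short exact sequence whose dimension count, together with $(\NS(Z)\otimes\bbQ_l)^g = \NS(Z)^g\otimes\bbQ_l$, gives the stated formula.

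By (3), part (4) reduces to showing $T_l^2(Z)^g = 0$ under the hypothesis $T_l^2(Y) = 0$. I would argue by comparing dimensions through the quotient. Since $|\langle g\rangle|$ is invertible in $\bbQ_l$, pullback along $\pi$ induces an isomorphism $H^2_{\text{et}}(Z/\langle g\rangle, \bbQ_l) \cong H^2_{\text{et}}(Z,\bbQ_l)^g$, and the minimal resolution $\sigma\colon Y \to Z/\langle g\rangle$ admits an orthogonal decomposition
\[
H^2_{\text{et}}(Y,\bbQ_l) = \sigma^* H^2_{\text{et}}(Z/\langle g\rangle,\bbQ_l) \;\oplus\; \bigoplus_i \bbQ_l\,[E_i],
\]
with $[E_i]$ the (algebraic) classes of exceptional components, together with the analogous decomposition for $\NS(Y)\otimes\bbQ_l$. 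Subtracting the two identifies $\dim T_l^2(Y)$ with $\dim H^2_{\text{et}}(Z/\langle g\rangle,\bbQ_l) - \rank\NS(Z/\langle g\rangle)$, which equals $\dim T_l^2(Z)^g$ by (2) and (3). Hence $T_l^2(Y) = 0$ forces $T_l^2(Z)^g = 0$, and (3) completes the proof. The final sentence is immediate: $\Br$ is birationally invariant on smooth projective surfaces and $\Br(\bbP^2) = 0$, so a rational $Y$ has $T_l^2(Y) = 0$, while Enriques surfaces satisfy $b_2 = \rho = 10$ and hence $T_l^2 = 0$.

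I expect the main technical hurdle to be the justification of the quotient-cohomology isomorphism $H^2_{\text{et}}(Z/\langle g\rangle,\bbQ_l)\cong H^2_{\text{et}}(Z,\bbQ_l)^g$ and of the resolution decomposition in positive characteristic, particularly in the possibly wild case; both hinge on $|\langle g\rangle|$ being invertible in $\bbQ_l$, which makes the standard trace-map and intersection-theoretic arguments go through.
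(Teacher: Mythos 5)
Your treatments of (1)--(3) are fine and essentially standard; the paper does not even write them out, and its entire proof consists of one line for (4): it quotes Proposition 5 of Shioda \cite{Shioda2}, which gives the isomorphism $T_l^2(Z)^g\cong T_l^2(Y)$ directly. That statement is proved via the function-field description $T_l^2(W)=\mathrm{Im}\bigl(H^2_{\rm et}(W,\bbQ_l)\to H^2(k(W),\bbQ_l)\bigr)$ recalled just before the proposition, together with $k(Y)=k(Z)^{\langle g\rangle}$ and a transfer argument; it never touches the geometry of the resolution.

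Your route to (4) is genuinely different, and this is where there is a gap. The dimension count hinges on the decomposition
$H^2_{\rm et}(Y,\bbQ_l)=\sigma^*H^2_{\rm et}(Z/\langle g\rangle,\bbQ_l)\oplus\bigoplus_i\bbQ_l[E_i]$,
which via the Leray spectral sequence for $\sigma$ requires $H^1_{\rm et}(E_x,\bbQ_l)=0$ for each exceptional fibre $E_x$ (and the injectivity of $\sigma^*$ on $H^2$ is part of what can fail: a nonzero differential $d_2\colon H^0(R^1\sigma_*\bbQ_l)\to H^2_{\rm et}(Z/\langle g\rangle,\bbQ_l)$ kills part of $H^2$ of the quotient). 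This vanishing holds for tame quotient singularities, whose exceptional sets are trees of rational curves, but the proposition assumes only that $g$ has finite order, and the paper applies part (4) precisely in the wild case (Section 6), where $X/\langle g\rangle$ can acquire a Gorenstein elliptic singularity whose exceptional divisor may be a cycle of rational curves or an elliptic curve with $H^1_{\rm et}\neq 0$. In that situation your identity $\dim T_l^2(Y)=\dim H^2_{\rm et}(Z/\langle g\rangle,\bbQ_l)-\rank\,\NS(Z/\langle g\rangle)$ can be off by $\dim H^1_{\rm et}(E)$, and $T_l^2(Y)=0$ would then only bound $\dim T_l^2(Z)^g$ rather than force it to vanish. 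To close the gap you should either restrict to resolutions whose exceptional fibres are trees of rational curves and check this in every application, or replace the resolution computation by the birational/function-field argument of Shioda's Proposition 5, which is what the paper does.
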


\begin{proof} (4) By Proposition 5 \cite{Shioda2}, $T_l^2(Z)^g\cong
T_l^2(Y)=0.$
Hence the result follows from (3).
\end{proof}

We use the following result of Deligne and Lusztig which is an extension to the case of wild automorphisms of the Lefschetz fixed point formula.

\begin{proposition}\label{DL}$($Theorem 3.2 \cite{DL}$)$ Let $X$ be a scheme,
 separated and of finite type over  an algebraically closed field $k$ of
characteristic $p> 0$ and let $g$ be an automorphism of finite
order of $X$. We decompose $g$ as $g=su$ where $s$ and $u$ are
powers of $g$ respectively of order prime to $p$ and
 a power of $p$. Then $$\Tr(g^*|
H_c^*(X)) =\Tr(u^*| H_c^*(X^s))$$
where the cohomology is the $l$-adic rational cohomology with
compact support.
\end{proposition}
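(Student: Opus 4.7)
The plan is to combine a Jordan decomposition of $g$ in $\langle g\rangle$ with the additivity of the $\ell$-adic trace under the open--closed decomposition $X=X^s\sqcup(X\setminus X^s)$, and then verify a vanishing lemma on the complement.

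Step 1 (Jordan decomposition). Writing $n=\mathrm{ord}(g)=p^a m$ with $\gcd(m,p)=1$, choose integers $\alpha,\beta$ with $\alpha m+\beta p^a=1$. Setting $s:=g^{\beta p^a}$ and $u:=g^{\alpha m}$ produces commuting powers of $g$ with $\mathrm{ord}(s)=m$, $\mathrm{ord}(u)=p^a$, and $su=g$. In particular $\langle g\rangle=\langle s\rangle\times\langle u\rangle$, and both $s$ and $u$ preserve every $g$-stable subscheme.

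Step 2 (Additivity). Let $X^s$ denote the scheme-theoretic fixed locus of $s$, a closed $g$-stable subscheme; set $U:=X\setminus X^s$. The standard long exact sequence of $\ell$-adic cohomology with compact supports for the open--closed pair $(U,X^s)$, applied in a $g$-equivariant fashion, yields
\begin{equation*}
\Tr(g^*|H_c^*(X))=\Tr(g^*|H_c^*(X^s))+\Tr(g^*|H_c^*(U)).
\end{equation*}
Since $s$ restricts to the identity on $X^s$, we have $g^*=(su)^*=u^*$ on $H_c^*(X^s)$, so $\Tr(g^*|H_c^*(X^s))=\Tr(u^*|H_c^*(X^s))$. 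It therefore suffices to prove $\Tr(g^*|H_c^*(U))=0$.

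Step 3 (Vanishing on $U$). By construction $s$ acts on $U$ without fixed points, and since $\mathrm{ord}(s)=m$ is prime to $p$, all stabilizers of points of $U$ under $\langle s\rangle$ are trivial. Hence the quotient morphism $\pi\colon U\to U/\langle s\rangle$ is a finite \'etale Galois cover of degree $m$, which is prime to $\ell$. The projection formula gives $\pi_{!}\bbQ_{\ell}\cong\bbQ_{\ell}[\langle s\rangle]$ on $U/\langle s\rangle$, and combined with proper base change this identifies
\begin{equation*}
H_c^*(U,\bbQ_\ell)\;\cong\;\mathrm{Ind}_{\langle u\rangle}^{\langle g\rangle}H_c^*(U/\langle s\rangle,\bbQ_\ell)
\end{equation*}
as a $\bbQ_\ell[\langle g\rangle]$-module, using the decomposition $\langle g\rangle=\langle s\rangle\times\langle u\rangle$. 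The character of an induced representation of an abelian group vanishes at any element lying outside the inducing subgroup; since $g=su$ with $s\neq 1$ we have $g\notin\langle u\rangle$, so $\Tr(g^*|H_c^*(U))=0$. Substituting back into Step~2 gives the desired formula.

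The main obstacle is Step~3: identifying $H_c^*(U,\bbQ_\ell)$ as an induced $\langle g\rangle$-module requires that $\pi$ be a genuinely free \'etale quotient, which in turn demands a careful scheme-theoretic definition of $X^s$ when $X$ is not smooth. In Deligne--Lusztig this is handled by working with the scheme-theoretic fixed locus (which is well-behaved precisely because $\mathrm{ord}(s)$ is prime to $p$, so that $\langle s\rangle$ is a linearly reductive group scheme) and combining the projection formula with the usual machinery of equivariant $\ell$-adic sheaves.
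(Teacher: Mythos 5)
Note first that the paper does not prove this proposition at all: it is quoted verbatim as Theorem~3.2 of Deligne--Lusztig \cite{DL} and used as a black box, so the only question is whether your argument actually establishes it. Steps~1 and~2 are correct, but Step~3 contains two gaps. The minor one: a point of $U=X\setminus X^s$ is not fixed by $s$, but it may be fixed by a proper power of $s$ (e.g.\ by $s^2$ when $m$ is even), so stabilizers in $\langle s\rangle$ of points of $U$ need not be trivial and $\pi\colon U\to U/\langle s\rangle$ need not be \'etale. This is repairable by stratifying $U$ into the $g$-stable locally closed pieces where the stabilizer equals a fixed proper subgroup $H<\langle s\rangle$ and working with $\langle s\rangle/H$ on each piece.

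The fatal gap is the assertion that $\pi_!\bbQ_\ell\cong\bbQ_\ell[\langle s\rangle]$, hence that $H_c^*(U,\bbQ_\ell)\cong\mathrm{Ind}_{\langle u\rangle}^{\langle g\rangle}H_c^*(U/\langle s\rangle,\bbQ_\ell)$. For a nontrivial Galois cover, $\pi_!\bbQ_\ell=\pi_*\bbQ_\ell$ is a \emph{nonconstant} local system (the one attached to the regular representation via the monodromy surjection $\pi_1(U/\langle s\rangle)\twoheadrightarrow\langle s\rangle$); it is constant only when the cover splits. A dimension count already refutes the induced-module claim: for $U=\bbG_m$, $s(x)=\zeta_m x$, $u=1$, one has $\dim H_c^*(U)=2$, whereas the induced module has dimension $2m$. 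What is true is the isotypic decomposition $H_c^*(U,\overline{\bbQ_\ell})\cong\bigoplus_{\chi}H_c^*\bigl(U/\langle s\rangle,\calL_\chi\bigr)$ over characters $\chi$ of $\langle s\rangle$, with $s$ acting by the scalar $\chi(s)$ on the $\chi$-summand; this gives $\Tr(g^*|H_c^*(U))=\sum_\chi\chi(s)\,\Tr\bigl(u^*|H_c^*(U/\langle s\rangle,\calL_\chi)\bigr)$, and to deduce vanishing from $\sum_\chi\chi(s)=0$ one must prove that $\Tr\bigl(u^*|H_c^*(U/\langle s\rangle,\calL_\chi)\bigr)$ is independent of $\chi$. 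That independence is the real content of the theorem, and it is exactly where the hypothesis that $u$ has $p$-power order must be used (Deligne--Lusztig obtain it by descending to a finite field and comparing with Frobenius point counts). Your Step~3 never invokes that hypothesis, which is a structural sign that the argument cannot be complete as written.
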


\begin{proposition}\label{trace}$($Lefschetz fixed point formula$)$   Let $X$ be a smooth projective variety over  an algebraically closed field $k$ of
characteristic $p> 0$ and let $g$ be a tame automorphism of $X$. Then $X^g={\rm Fix}(g)$ is smooth and
 $$e(g):=e({\rm Fix}(g))=\Tr(g^*|
H_{\rm et}^*(X)).$$
\end{proposition}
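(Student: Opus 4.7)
The plan is to prove the proposition in two steps: first establish that $X^g$ is smooth, then obtain the trace identity as a direct application of Proposition \ref{DL}.

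For the smoothness of $X^g$, I would work locally at each closed fixed point $x$. Since $g$ has order coprime to $p$, the induced $\langle g\rangle$-action on the cotangent space $\mathfrak{m}_x/\mathfrak{m}_x^2$ is semisimple. By the standard tame linearization argument (averaging a lift of an eigenbasis, which is legitimate precisely because $|\langle g\rangle|$ is prime to $p$), one can choose a regular system of parameters $t_1,\ldots,t_n$ of the completed local ring $\widehat{\mathcal{O}}_{X,x}$ on which $g^*$ acts diagonally, say $g^*t_i=\chi_i t_i$ with $\chi_i\in k^\times$. The ideal of the fixed subscheme is then generated locally by those parameters $t_i$ with $\chi_i\ne 1$, so $\widehat{\mathcal{O}}_{X^g,x}\cong k[[t_i : \chi_i=1]]$ is regular. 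Hence $X^g$ is smooth.

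For the trace identity, I would apply Proposition \ref{DL} with the decomposition $g=su$ forced by tameness to be $s=g$, $u=1$. This immediately gives $\Tr(g^*|H_c^*(X))=\Tr(1|H_c^*(X^g))$. Since $X$ is smooth projective and $X^g$ is closed in $X$, both $X$ and $X^g$ are proper, so compactly supported and ordinary $l$-adic cohomology agree on each. The trace of the identity on $H^*_{\rm et}(X^g,\mathbb{Q}_l)$ is then the alternating sum of the Betti numbers, i.e.\ the Euler characteristic $e(X^g)=e({\rm Fix}(g))$. The main obstacle is the linearization step used for smoothness; the trace identity itself is a one-line specialization of the Deligne--Lusztig formula.
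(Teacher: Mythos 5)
Your proof is correct and, for the trace identity, is exactly the paper's argument: specialize Proposition \ref{DL} to $s=g$, $u=1_X$, identify compactly supported with ordinary cohomology on the proper schemes $X$ and $X^g$, and read off $\Tr(1|H^*(X^g))=e(X^g)$. The tame-linearization argument for smoothness of $X^g$ is the standard one and is sound; the paper simply asserts smoothness without proof, so your write-up is if anything slightly more complete.
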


\begin{proof} This follows from Proposition \ref{DL}. The tame case is just the case with $s=g$ and $u=1_X$. Note that
$\sum_j (-1)^j\Tr(1^*| H_{\rm et}^j(X^g, \bbQ_l))=e(X^g).$
\end{proof}

\begin{proposition}\label{sym}$($Theorem 3.3  and Proposition 4.1 \cite{DK2}$)$ A tame
symplectic automorphism $h$ of  a K3 surface has finitely many
fixed points, the number of fixed points $f(h)$ depends only on
the order of $h$ and the list of possible pairs $({\rm ord}(h),
f(h))$ is the same as in the complex case:
$$ ({\rm ord}(h), f(h))=(2,8),\,\,(3,6),\,\,(4,4),\,\,(5,4),\,\,(6,2),\,\,(7,3),\,\,(8,2).$$
\end{proposition}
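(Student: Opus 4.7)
The plan is to reduce to (i) finiteness of the fixed locus of a tame symplectic automorphism and (ii) an Euler-characteristic identity coming from the quotient surface, then finish by induction on the order. The hard part will be excluding the remaining ``arithmetic ghost'' orders, such as $n=11$ and $n=14$, which slip past the counting argument.

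First I would show $\text{Fix}(h)$ is finite and consists of isolated points at which $dh_p$ has order exactly $n=\text{ord}(h)$. Since $h$ preserves a nowhere-vanishing $2$-form, $dh_p\in SL(T_pX)=SL_2(k)$; by tameness $dh_p$ is diagonalizable, equal to $\text{diag}(\zeta,\zeta^{-1})$ for some root of unity $\zeta$. If a curve through $p$ were fixed, one eigenvalue would equal $1$, forcing the other to equal $1$ by the determinant condition, so $dh_p=I$. Cartan-style linearization (valid for finite groups of order prime to $p$, by semisimplicity of their representations on the formal completion) then forces $h=\text{id}$ on a formal neighborhood of $p$, hence globally --- a contradiction. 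The same linearization shows that if $dh_p$ had order $m<n$, then $h^m$ would be trivial globally, so $\zeta$ is a primitive $n$-th root of unity.

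Next I would exploit the quotient $Y=X/\langle h\rangle$. The local analysis shows $Y$ has only cyclic quotient singularities of type $A_{d-1}$ at the images of orbits with stabilizer of order $d$, and its minimal resolution $\widetilde{Y}$ is again a K3 surface (the RDP resolution is crepant and $\omega_X$ descends to $\omega_Y=\mathcal{O}_Y$; $H^1(\mathcal{O}_{\widetilde Y})=0$ follows from $H^1(\mathcal{O}_X)^{\langle h\rangle}=0$ together with the rationality of the singularities of $Y$). Letting $\mu_d$ denote the number of $\langle h\rangle$-orbits with stabilizer of order $d$, the Burnside-type identity $n\cdot e(Y)=e(X)+\sum_{k=1}^{n-1} f(h^k)$ together with $e(\widetilde Y)=24=e(Y)+\sum_{d>1}\mu_d(d-1)$ yields
$$24(n-1) \;=\; n \sum_{d\mid n,\,d>1} \mu_d\cdot\frac{d^2-1}{d},$$
which, together with the stratification identity $f(h^k)=\sum_{d\mid n,\,(n/d)\mid k} \mu_d\cdot(n/d)$ (coming from Proposition~\ref{trace} applied to each iterate $h^k$), gives a recursive linear system. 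An induction on $n$ then yields all the claimed values: for prime $n\in\{2,3,5,7\}$ the main identity alone forces $\mu_n=24/(n+1)\in\{8,6,4,3\}$; for $n\in\{4,6,8\}$, plugging the inductively known $f(h^k)$ into the stratification identity produces a small linear system whose unique non-negative integer solution gives $f(h)=\mu_n=4,2,2$ respectively; for $n\in\{9,10,12,15,\dots\}$ direct substitution shows $\mu_d$ fails to be a non-negative integer.

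The hard part is excluding the few remaining orders where the counting system admits non-negative integer solutions, most notably $n=11$ (with $\mu_{11}=2$) and $n=14$ (with $\mu_2=\mu_7=\mu_{14}=1$). These must be ruled out by a lattice-theoretic argument: by Proposition~\ref{diminv}(4) applied to the K3 resolution $\widetilde Y$, the co-invariant subspace $H^2_{\text{et}}(X,\bbQ_l)^{h,\perp}$ lies entirely in $\NS(X)\otimes\bbQ_l$, and a primitive integral embedding of the resulting $\bbZ[\zeta_n]$-module into the Picard lattice of a K3 surface can be excluded for $n\geq 9$ by discriminant-form (genus) analysis, extending Nikulin's original obstruction to positive characteristic via the integrality statement of Proposition~\ref{integral}(1).
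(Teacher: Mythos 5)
You should first note that the paper does not prove this proposition at all: it is imported verbatim from Dolgachev--Keum (Theorem 3.3 and Proposition 4.1 of \cite{DK2}), so there is no in-paper argument to compare against, and your proposal is really an attempt to reprove a cited theorem. The first half of your argument is sound and is essentially the standard Nikulin-style approach that \cite{DK2} adapts: tameness gives linearization at fixed points, the symplectic condition forces $dh_p=\mathrm{diag}(\zeta,\zeta^{-1})$ with $\zeta$ primitive, hence isolated fixed points and $A_{d-1}$ quotient singularities, the resolution of $X/\langle h\rangle$ is again a K3, and the two Euler-characteristic identities you write down are correct and do pin down $f(h)$ for $n=2,3,4,5,6,7,8$ by induction. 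One concrete error in this part: $n=15$ does \emph{not} fail the counting test as you assert. The system $5\mu_3+\mu_{15}=6$, $3\mu_5+\mu_{15}=4$, $40\mu_3+72\mu_5+224\mu_{15}=336$ has the non-negative integer solution $(\mu_3,\mu_5,\mu_{15})=(1,1,1)$, and the corresponding eigenvalue pattern on $H^2$ is also arithmetically consistent; so $15$ belongs with $11$, $14$ and $23$ on the list of orders that survive all the soft tests.

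The genuine gap is the exclusion step, which is where the entire difficulty of the theorem lives. Your proposed mechanism does not work as stated. Proposition \ref{diminv}(4) requires $T_l^2$ of the resolved quotient to vanish, which holds for rational or Enriques quotients; here the quotient is a K3 surface, so $T_l^2(\widetilde Y)\cong T_l^2(X)^h$ need not vanish and the proposition gives you nothing. The assertion that the co-invariant subspace of $H^2_{\mathrm{et}}(X,\bbQ_l)$ lies in $\NS(X)\otimes\bbQ_l$ is the positive-characteristic analogue of ``a symplectic automorphism acts trivially on the transcendental lattice,'' and that is precisely the Hodge-theoretic input the paper's introduction flags as unavailable in characteristic $p$; it requires proof, not citation. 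Finally, even granting algebraicity of the co-invariant part, Nikulin's obstruction is a \emph{rank} bound ($\rank\,\Omega_h\le 19$ because $\rho\le 20$ in characteristic $0$), and it evaporates for supersingular surfaces with $\rho=22$: for a putative tame symplectic automorphism of order $11$ the co-invariant part has rank $20$ and embeds comfortably in a rank-$22$ supersingular Picard lattice, so a ``discriminant-form analysis for $n\ge 9$'' would have to engage seriously with the discriminant $-p^{2\sigma}$ of supersingular lattices and is not a routine extension of the complex argument. The existence of wild symplectic automorphisms of order $11$ and of the exceptional groups of order $>3840$ in \cite{DK2} shows that positive-characteristic K3 surfaces genuinely admit more symplectic symmetry than complex ones, so any correct exclusion must use tameness in an essential structural way; as written, your final step is a placeholder for the actual content of \cite{DK2}.
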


\begin{lemma}\label{Lefschetz} Let $h$ be a tame symplectic automorphism of  a K3 surface $X$. Then $h^*|H_{\rm et}^2(X,\bbQ_l)$ has eigenvalues
$$\begin{array}{lll} {\rm ord}(h)=2&:&[h^*]=[1,\, 1.13,\, -1.8]\\
{\rm ord}(h)=3&:&[h^*]=[1,\, 1.9,\, (\zeta_3:2).6]\\
{\rm ord}(h)=4&:&[h^*]=[1,\, 1.7,\, (\zeta_4:2).4,\, -1.6]\\
{\rm ord}(h)=5&:&[h^*]=[1,\, 1.5,\, (\zeta_5:4).4]\\
{\rm ord}(h)=6&:&[h^*]=[1,\, 1.5,\, (\zeta_3:2).4,\, (\zeta_6:2).2,\, -1.4]\\
{\rm ord}(h)=7&:&[h^*]=[1,\, 1.3,\, (\zeta_7:6).3]\\
{\rm ord}(h)=8&:&[h^*]=[1,\,
1.3,\,(\zeta_8:4).2,\,(\zeta_4:2).3,\, -1.4]\end{array}$$ where
the first eigenvalue corresponds to an invariant ample divisor.
\end{lemma}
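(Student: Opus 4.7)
The plan is to pin down the multiplicities of the roots of unity appearing in $[h^*]$ by combining the Lefschetz fixed point formula (Proposition \ref{trace}) with the table of fixed-point counts (Proposition \ref{sym}), and with the integrality of the characteristic polynomial (Proposition \ref{integral}(1)). Since $h$ has tame order $n$, every eigenvalue of $h^*$ on $H^2_{\rm et}(X,\bbQ_l)$ is an $n$-th root of unity. Integrality of the characteristic polynomial forces the eigenvalues to occur in complete Galois orbits, so for each divisor $a\mid n$ there is a nonnegative integer $m_a$ counting how many times the orbit $\{\zeta_a^j:\gcd(j,a)=1\}$ appears. These multiplicities satisfy the dimension constraint
\[
\sum_{a\mid n} m_a\,\phi(a)=b_2(X)=22,
\]
and, by Proposition \ref{integral}(2) together with invariance of an ample class, $m_1\ge 1$.

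Next I would apply Proposition \ref{trace} to each power $h^k$ for $1\le k\le n-1$. Because a power of a symplectic tame automorphism is again symplectic and tame, Proposition \ref{sym} gives the fixed-point count $f(h^k)$ in terms of the order of $h^k$. Using $H^0_{\rm et}(X,\bbQ_l)=H^4_{\rm et}(X,\bbQ_l)=\bbQ_l$ with trivial action and $H^1=H^3=0$, the Lefschetz formula becomes
\[
\Tr\bigl(h^{k*}\mid H^2_{\rm et}(X,\bbQ_l)\bigr)=f(h^k)-2.
\]
On the other hand, the left-hand side equals $\sum_{a\mid n}m_a\,c_a(k)$, where $c_a(k)=\sum_{\gcd(j,a)=1}\zeta_a^{jk}$ is the Ramanujan sum, computable in closed form from $c_a(k)=\mu(a/d)\phi(a)/\phi(a/d)$ with $d=\gcd(a,k)$. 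This yields a linear system in the $m_a$.

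I would then treat each of the seven cases $n=2,3,4,5,6,7,8$ separately. In each case the divisor lattice of $n$ is small and the system obtained from $k=0,1,\dots,n-1$ (counting the dimension equation as $k=0$) determines the $m_a$ uniquely. For instance, for $n=2$ the two equations $m_1+m_2=22$ and $m_1-m_2=8-2$ immediately give $(m_1,m_2)=(14,8)$; for $n=3$ the equations $m_1+2m_3=22$ and $m_1-m_3=6-2$ give $(m_1,m_3)=(10,6)$; for $n=4$ one uses both $f(h)=4$ and $f(h^2)=8$; for $n=6$ one combines $f(h)=2$, $f(h^2)=6$ and $f(h^3)=8$; and analogously for $n=5,7,8$.

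The main obstacle is purely bookkeeping: correctly computing the Ramanujan sums $c_a(k)$ for all divisor pairs $(a,k)$ of each $n\in\{2,\dots,8\}$ and verifying that the resulting linear systems have the unique nonnegative integer solutions asserted in the lemma. There is no conceptual difficulty, since symplecticity, tameness, smoothness of the fixed locus, and the Galois-stable shape of the spectrum are all already in hand from the cited results, and the ``$1$'' corresponding to the invariant ample class is accounted for by the inequality $m_1\ge 1$.
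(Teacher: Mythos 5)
Your proposal is correct and is essentially the paper's own argument: the paper proves this lemma by exactly the combination of Proposition \ref{trace} (Lefschetz applied to the powers $h^k$) and Proposition \ref{sym} (the fixed-point counts), with the Galois-orbit structure of the spectrum supplied by Lemma \ref{Weyl}. Your explicit linear systems in the orbit multiplicities $m_a$ check out in all seven cases and simply spell out the bookkeeping the paper leaves implicit.
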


\begin{proof} The result follows from  Proposition \ref{trace} and Proposition \ref{sym}.
\end{proof}

\begin{lemma}\label{nsym4}  Let $g$ be a tame automorphism of finite order of a K3 surface $X$ such that $g^n$ is symplectic. Then for any $i$ dividing $n$
${\rm Fix}(g^i)$ is finite and $0\le e(g^i)=\Tr(g^{i*}|H^*(X))\le e(g^n)=\Tr(g^{n*}|H^*(X))$.
\end{lemma}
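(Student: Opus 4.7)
The plan is to exploit the containment of fixed loci together with the Lefschetz fixed point formula for tame automorphisms.

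First I would observe the set-theoretic inclusion $\mathrm{Fix}(g^i) \subseteq \mathrm{Fix}(g^n)$. This is immediate: if $i \mid n$, then $g^n = (g^i)^{n/i}$, so every point fixed by $g^i$ is also fixed by $g^n$. Since $g^n$ is tame and symplectic, Proposition \ref{sym} tells us that $\mathrm{Fix}(g^n)$ is a finite set of points (of cardinality $f(g^n)$ drawn from the short list there). Consequently $\mathrm{Fix}(g^i)$ is also a finite (possibly empty) set of points; in particular $\mathrm{Fix}(g^i)$ is smooth and zero-dimensional.

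Next, since $g^i$ is tame (its order divides the order of $g$, which is prime to $p$), Proposition \ref{trace} applies and gives
\[
e(g^i) \;=\; \mathrm{Tr}(g^{i*}\mid H^*_{\mathrm{et}}(X,\mathbb{Q}_l)),
\]
and similarly $e(g^n) = \mathrm{Tr}(g^{n*}\mid H^*_{\mathrm{et}}(X,\mathbb{Q}_l))$. For the zero-dimensional smooth scheme $\mathrm{Fix}(g^i)$, the Euler characteristic coincides with the number of points, so $e(g^i) = \#\mathrm{Fix}(g^i) \geq 0$.

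Finally, the set inclusion $\mathrm{Fix}(g^i) \subseteq \mathrm{Fix}(g^n)$ gives $\#\mathrm{Fix}(g^i) \leq \#\mathrm{Fix}(g^n)$, i.e.\ $e(g^i) \leq e(g^n)$, completing the chain of inequalities. There is no real obstacle here; the lemma is a direct combination of Proposition \ref{sym} (to force finiteness of the ambient fixed locus) and Proposition \ref{trace} (to convert Euler characteristics of fixed loci into $\ell$-adic traces).
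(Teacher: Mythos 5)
Your proof is correct and follows exactly the paper's (much terser) argument: the paper's entire proof is the observation that $\mathrm{Fix}(g^i)\subset\mathrm{Fix}(g^n)$ and the latter is finite, leaving the reader to supply the appeal to Proposition \ref{sym} for finiteness and to Proposition \ref{trace} to identify the Euler characteristics with traces, which you have spelled out. No issues.
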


\begin{proof}  ${\rm Fix}(g^i)\subset {\rm Fix}(g^n)$ and the latter is a finite set.
\end{proof}

The following lemma will be used in the proof that no automorphism of order 44 appears in characteristic $p=11$ (Lemma \ref{44}).

\begin{lemma}\label{nsym2}  
Let $X$ be a K3 surface in characteristic $p\neq 2$.
Assume that $h$ is an automorphism of order $2$ with $\dim
H^2_{\rm et}(X,{\bbQ}_l)^h =2$. Then $h$ is non-symplectic and has
an $h$-invariant elliptic fibration $\psi:X\to {\bf P}^1$,  the quotient $X/\langle h\rangle\cong {\bf F}_e$ a rational ruled surface, and $X^h$ is either a curve of genus
$9$ which is a $4$-section of $\psi$ or the union of a section and a curve of genus $10$ which
is a $3$-section.
In the first case $e\le 2$, and in the second $e=4$. 
\end{lemma}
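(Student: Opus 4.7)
The plan is to reduce to a non-symplectic involution, apply the Lefschetz fixed point formula, bound the topology of $X^h$ via a Néron--Severi rank argument, identify the quotient as a Hirzebruch surface, and then analyze each case explicitly on $\mathbb{F}_e$.

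First, by Lemma \ref{Lefschetz} a tame symplectic involution would satisfy $\dim H^2_{\rm et}(X,\bbQ_l)^h = 14$, so the hypothesis forces $h$ non-symplectic. As $p \neq 2$ the involution is tame, so $X^h$ is smooth; at any fixed point $p$, the action on $K_X|_p$ must agree with the $-1$ action on $H^0(X,\Omega_X^2)$, forcing the tangent linearization to be $\mathrm{diag}(1,-1)$, so $X^h$ contains no isolated points and is a smooth disjoint union of curves $C \sqcup E_1 \sqcup \cdots \sqcup E_k$ with $C$ of genus $g$ and each $E_i \cong \bbP^1$. The eigenvalues of $h^*$ on $H^2_{\rm et}(X,\bbQ_l)$ are $1$ (multiplicity $2$) and $-1$ (multiplicity $20$), so Proposition \ref{trace} gives $e(X^h) = 2 + (2-20) = -16$; since $e(X^h) = 2 - 2g + 2k$, we get $g = k+9$.

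Next, I would bound $k$. The quotient $Y = X/\la h\ra$ is smooth; since $h$ acts as $-1$ on $H^0(X,\Omega_X^2)$ we have $p_g(Y) = 0$, while $q(Y) = 0$ is inherited from $X$; as $-2K_Y \sim B := \pi(X^h)$ is effective, $\kappa(Y) = -\infty$, and Enriques--Kodaira gives that $Y$ is rational. Proposition \ref{diminv}(4) then yields $\rank \NS(X)^h = \dim H^2_{\rm et}(X,\bbQ_l)^h = 2$, and Proposition \ref{diminv}(2) gives $\rank \NS(Y) = 2$. The classes $[C], [E_1], \ldots, [E_k]$ in $\NS(X)^h$ are nonzero and pairwise orthogonal (disjoint curves) with $[C]^2 > 0$ and $[E_i]^2 = -2$, so in a rank-$2$ lattice they can number at most two: $k \le 1$, i.e., $(g,k) = (9,0)$ or $(10,1)$. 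A smooth rational surface of Picard rank $2$ is a Hirzebruch surface, so $Y \cong \mathbb{F}_e$ with negative section $C_0$ and fiber class $F$; pulling back the ruling $\mathbb{F}_e \to \bbP^1$ gives the $h$-invariant fibration $\psi\colon X \to \bbP^1$, and its general fiber is a double cover of $\bbP^1$ branched at $B \cdot F = 4$ points (since $B \in |-2K_Y| = |4C_0 + (2e+4)F|$), hence of genus $1$.

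Finally, I would analyze the two cases by intersection theory on $\mathbb{F}_e$. In Case 1 ($k = 0$), $B = \pi(C)$ is irreducible, so $B \cdot C_0 = -2e + 4 \ge 0$ forces $e \le 2$, and $C$ is a $4$-section since $B \cdot F = 4$. In Case 2 ($k = 1$), $B$ splits as $B_1 \sqcup B_2$ with $B_2 = \pi(E) \sim C_0 + nF$ a section and $B_1 = \pi(C)$ a $3$-section; $B_1 \cdot B_2 = 0$ computes to $n = (e-4)/2$, and an irreducible smooth section of $\mathbb{F}_e$ must be either $C_0$ (with $n = 0$) or a positive section ($n \ge e$); the latter gives $e \le -4$, impossible, so $n = 0$ and $e = 4$, with $E$ a section and $C$ a $3$-section. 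The main obstacle is the lattice step: establishing rationality of $Y$ in order to apply Proposition \ref{diminv}(4), then exploiting orthogonality of the fixed curves to cap $k$; once this is in place, the rest is a direct intersection-theoretic computation on $\mathbb{F}_e$.
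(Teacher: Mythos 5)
Your argument follows essentially the same route as the paper: non-symplecticity via Lemma \ref{Lefschetz}, smoothness of $X^h$ and of the rational quotient, Picard number $2$ via Proposition \ref{diminv} to get ${\bf F}_e$, and then intersection theory with $B\equiv -2K_Y\equiv 4C_0+(2e+4)F$ to pin down the two cases. Your computations are correct, and in places you supply more detail than the paper (the Castelnuovo argument for rationality of $Y$, the orthogonality argument capping the number of components at two, and the $B_1\cdot B_2=0$ route to $e=4$ in place of the paper's observation that $\pi(E)^2=-4$ forces $\pi(E)=S_0$). These are equivalent in substance.

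There is, however, one genuine gap: the claim that the pulled-back fibration $\psi$ is \emph{elliptic}. You argue that the general fibre is a double cover of ${\bf P}^1$ branched at $B\cdot F=4$ points, ``hence of genus $1$,'' but this presumes the four branch points are distinct for general $t$, i.e.\ that $B\to{\bf P}^1$ is separable. In characteristic $3$, in the second case, the genus-$10$ component is a $3$-section and its projection to the base could a priori be inseparable; then the branch divisor on a general ruling fibre is $3P+Q$, the double cover acquires a cusp at $P$, and $\psi$ is quasi-elliptic (arithmetic genus $1$ but not smooth), which is exactly what the lemma excludes. The paper handles this with a separate argument for $p=3$: the closure of the cusps of a quasi-elliptic fibration is a smooth rational curve fixed pointwise by $h$, which forces the genus-$10$ curve to be a section, a contradiction. (In the first case, and for all $p\ge 5$, separability is automatic since the inseparable degree is a $p$-power dividing $4$ resp.\ $3$, so your argument is complete there.) You need to add such an argument to cover $p=3$.
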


\begin{proof} Since $\dim H^2_{\rm et}(X,{\bbQ}_l)^h=2$, the eigenvalues of
$h^*|H^2_{\rm et}(X,{\bbQ}_l)$ must be $$[h^*]=[1.2,\,-1.20], \,\,{\rm so}\,\,\,\Tr
(h^*|H^*(X))=-16.$$ By Lemma \ref{Lefschetz},
$h$ is non-symplectic, thus $X^h$  is a disjoint union of smooth
curves and $X/\langle h\rangle$ is a nonsingular
rational surface.  By Proposition
\ref{diminv}, $X/\langle h\rangle$ has Picard number 2, hence is
isomorphic to a rational ruled surface ${\bf F}_e$.  Note that $e(X^h)=-16$, hence $X^h$ is non-empty
and has at most 2 components. Thus $X^h$ is either a curve $C_9$ of
genus 9 or the union of two curves $C_0$ and $C_{10}$ of genus 0 and 10, respectively. 
 In the first case, the image $C_9'\subset {\bf F}_e$ of $C_9$ satisfies $C_9'^2=32$ and $C_9'K=-16$, hence $C_9'\equiv 4S_0+(4+2e)F$, where $S_0$ is the
section with $S_0^2=-e$, and $F$ a fibre of ${\bf F}_e$. Since $S_0C_9'\ge 0$, $e\le 2$.
In the second case, the image $C_0'$ of $C_0$ has $C_0'^2=-4$, hence $C_0'=S_0$ and $e=4$, then the image  $C_{10}'\equiv 3(S_0+4F)$. 

In characteristic $p\neq 3$ the pull-back of
the ruling on ${\bf F}_e$ gives an $h$-invariant elliptic fibration
$\psi:X\to {\bf P}^1$. 

In characteristic $p=3$ we have to show that the pull-back is not a quasi-elliptic fibration. Suppose it is. The closure of the cusps of irreducible fibres is a smooth rational curve and must be fixed pointwise by $h$, then the genus 10 curve must be a section of the quasi-elliptic fibration, impossible. 
\end{proof}

We will use frequently the  Weyl theorem of the following form.

\begin{lemma}\label{Weyl} Let $V$ be a finite dimensional vector space over a field  of
characteristic $0$. Let $g\in \GL(V)$ be a linear automorphism of finite order.
Assume that the characteristic polynomial of $g$ has integer coefficients.
If for some positive integer $m$ a primitive $m$-th root of unity appears
with multiplicity $r$ as an eigenvalue of $g$, then so does
each of its conjugates.
\end{lemma}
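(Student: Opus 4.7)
The plan is to reduce the statement to the well-known fact that a monic polynomial with integer coefficients whose roots are all roots of unity is a product of cyclotomic polynomials. Since $g$ has finite order, every eigenvalue of $g$ is a root of unity. Let $P(x)\in \bbZ[x]$ denote the characteristic polynomial of $g$; by hypothesis it is monic of degree $\dim V$ with integer coefficients, and all its roots (in an algebraic closure of the base field) are roots of unity.

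Next I would factor $P(x)$ over $\bbQ$. Because each cyclotomic polynomial $\Phi_m(x)$ is the minimal polynomial over $\bbQ$ of any primitive $m$-th root of unity, and the primitive $m$-th roots of unity are exactly the roots of $\Phi_m$, the rational factorization of $P(x)$ must take the form
\[
P(x) = \prod_{m \ge 1} \Phi_m(x)^{r_m}
\]
for non-negative integers $r_m$ (almost all zero). By Gauss's lemma, since $P$ and each $\Phi_m$ lie in $\bbZ[x]$ and are monic, this identity also holds in $\bbZ[x]$, but that refinement is not even needed here.

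Finally, I would read off the multiplicities. For a fixed $m$, every primitive $m$-th root of unity $\zeta$ is a root of exactly one cyclotomic factor on the right, namely $\Phi_m(x)$, and $\Phi_m$ is separable. Hence the multiplicity of $\zeta$ as a root of $P(x)$ equals $r_m$, and this integer depends only on $m$ and not on the choice of the primitive $m$-th root of unity. In particular, if one primitive $m$-th root of unity occurs as an eigenvalue of $g$ with multiplicity $r$, then every Galois conjugate (i.e., every other primitive $m$-th root of unity) also occurs with multiplicity $r$, as claimed. There is no real obstacle in this argument; the only thing to be careful about is to note that $P(x)$, being monic with roots in $\overline{\bbQ}$ that are roots of unity, genuinely does factor into cyclotomic polynomials over $\bbQ$, which follows from the irreducibility of $\Phi_m$ over $\bbQ$.
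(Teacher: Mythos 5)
Your proof is correct and follows essentially the same route as the paper's: both rest on the fact that every eigenvalue is a root of unity and that the cyclotomic polynomial $\Phi_m$, being irreducible over $\bbQ$, must divide the integer characteristic polynomial. Your version is actually slightly more complete, since writing the full factorization $P(x)=\prod_m\Phi_m(x)^{r_m}$ makes the equality of \emph{multiplicities} among conjugates explicit, whereas the paper's two-line proof only states divisibility by $\Phi_m$ and leaves the iteration on multiplicities implicit.
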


\begin{proof}
Let $n={\rm ord}(g)$. Every eigenvalue of $g$ is an $n$-th root of unity, not necessarily primitive.
If  a primitive $m$-th root of unity appears as an eigenvalue of $g$, then its cyclotomic polynomial must divide
the characteristic polynomial of $g$.
\end{proof}

The following easy lemmas also will be used frequently.

\begin{lemma}\label{fix} Let $S$ be a set and ${\rm Aut}(S)$ be the group of bijections of $S$. For any $g\in {\rm Aut}(S)$ and positive integers $a$ and $b$,
\begin{enumerate}
\item ${\rm Fix}(g)\subset {\rm Fix}(g^a)$;
\item ${\rm Fix}(g^a)\cap {\rm Fix}(g^b)={\rm Fix}(g^d)$ where $d=\gcd (a, b)$;
\item ${\rm Fix}(g)= {\rm Fix}(g^a)$ if ${\rm ord}(g)$ is finite and prime to $a$.
\end{enumerate}
\end{lemma}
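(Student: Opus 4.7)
The plan is to prove the three claims in order, using (1) as a stepping stone for (2) and (3), and invoking Bezout's identity for the nontrivial containments.

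For (1), I would observe directly that $g(x)=x$ implies $g^a(x)=x$ by an obvious induction on $a\ge 0$ (and by applying $g^{-1}$ repeatedly for $a<0$, though only $a\ge 1$ is needed here). So ${\rm Fix}(g)\subset {\rm Fix}(g^a)$ is immediate, with no real content.

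For (2), the inclusion ${\rm Fix}(g^d)\subset {\rm Fix}(g^a)\cap {\rm Fix}(g^b)$ follows from part (1) applied to $g^d$, since $d$ divides both $a$ and $b$, so $g^a=(g^d)^{a/d}$ and $g^b=(g^d)^{b/d}$. For the reverse inclusion, the key step is Bezout: pick integers $u,v$ with $ua+vb=d$. Then $g^d=(g^a)^u(g^b)^v$, and if $x$ is fixed by both $g^a$ and $g^b$ it is fixed by each factor and hence by the product. This is the only place where a small identity is needed, and it is standard.

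For (3), let $n={\rm ord}(g)$. By (1) we have ${\rm Fix}(g)\subset {\rm Fix}(g^a)$ as always. For the other direction, the hypothesis $\gcd(a,n)=1$ lets me write $ua+vn=1$ for integers $u,v$, and then $g=g^{ua+vn}=(g^a)^u\cdot(g^n)^v=(g^a)^u$ since $g^n=\mathrm{id}$. Applying (1) to $g^a$ then yields ${\rm Fix}(g^a)\subset {\rm Fix}((g^a)^u)={\rm Fix}(g)$. There is no serious obstacle anywhere; the whole lemma is a formal consequence of part (1) together with Bezout, and the only thing to be careful about is that in (3) the hypothesis must be read as $\gcd({\rm ord}(g),a)=1$ so that the Bezout identity involves the exponent $n$ of $g$.
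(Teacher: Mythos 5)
Your proof is correct and complete. The paper itself offers no proof of this lemma at all --- it is introduced as one of the ``easy lemmas'' and stated without argument --- so there is nothing to compare against; your write-up simply supplies the standard details (induction for (1), Bezout's identity $ua+vb=d$ for (2), and $ua+v\,{\rm ord}(g)=1$ for (3)), which is exactly the argument the author evidently had in mind. One small point worth making explicit: in (2) the Bezout coefficients $u,v$ may be negative, so you are implicitly using that a point fixed by $g^a$ is also fixed by $(g^a)^{-1}$ --- harmless since $g$ is a bijection, but it is the reason the lemma is stated for ${\rm Aut}(S)$ rather than for arbitrary self-maps.
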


\begin{lemma}\label{P1} Let $g$ be an automorphism of
${\bf P}^1$ over an algebraically closed field $k$ of characteristic
$p>0$. \begin{enumerate}
\item If $g^{pm}=1$, $g^{m}\neq 1$ and $m$ is coprime to
$p$, then $g^{p}=1$. \item If $g^{p^2}=1$, then $g^{p}=1$.
\end{enumerate}
\end{lemma}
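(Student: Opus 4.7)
The plan is to lift $g$ to an element $\tilde g \in \GL_2(k)$ and use Jordan normal form. Since $k$ is algebraically closed, $\tilde g$ is conjugate either to a diagonal matrix $\mathrm{diag}(\alpha,\beta)$ or to a Jordan block $\lambda I + N$ with $N\neq 0$ and $N^2=0$. I will show that in the diagonal case the order of $g$ in $\PGL_2(k)$ is automatically coprime to $p$, while in the Jordan block case $g^p=1$ holds already; both statements of the lemma then fall out by case analysis.

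For the diagonal case, after passing to $\PGL_2(k)$ one may normalize $\tilde g$ to $\mathrm{diag}(1,\beta/\alpha)$, so the order of $g$ equals the multiplicative order of $\beta/\alpha \in k^\times$. In characteristic $p$ the factorization $x^p-1=(x-1)^p$ shows that $k^\times$ contains no nontrivial element of $p$-power order, so $\mathrm{ord}(g)$ is prime to $p$. For the Jordan block case, a direct binomial expansion using $N^2=0$ yields $\tilde g^n = \lambda^n I + n\lambda^{n-1}N$, which is a scalar matrix if and only if $p \mid n$; hence $g^n=1$ in $\PGL_2(k)$ iff $p \mid n$, and in particular $g^p=1$.

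Assembling the two cases: for (2), if $g$ is diagonalizable then $\mathrm{ord}(g) \mid p^2$ combined with $\gcd(\mathrm{ord}(g),p)=1$ forces $g=1$, hence $g^p=1$; and in the Jordan block case $g^p=1$ holds a fortiori. For (1), the diagonal case is excluded because it would make $\mathrm{ord}(g)$ simultaneously coprime to $p$ and a divisor of $pm$, hence a divisor of $m$, contradicting $g^m\neq 1$; therefore $g$ lies in the Jordan block case and $g^p=1$. The only input beyond elementary linear algebra is the vanishing of nontrivial $p$-torsion in $k^\times$, which turns the Jordan block computation into the desired conclusion; there is no real obstacle.
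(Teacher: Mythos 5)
Your proof is correct and is exactly the argument the paper intends: the paper's entire proof is the one-line remark that the lemma ``easily follows from the Jordan canonical form,'' and your case split (diagonalizable lift, whose class in $\PGL_2(k)$ has order prime to $p$ because $k^\times$ has no $p$-torsion, versus a nontrivial Jordan block, whose class has order exactly $p$) is precisely that argument written out in full.
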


\begin{proof} easily follows from the Jordan
canonical form.
\end{proof}

\begin{lemma}\label{sum} Let $R(n)$ be the sum of all primitive $n$-th root of unity in $\overline{\bbQ}$ or in
$\overline{\bbQ_l}$ where $(l,n)=1$. Then
$$R(n)=\left\{\begin{array}{ccl} 0&{\rm if}& n\,{\rm has\,\, a\,\, square\,\, factor},\\
(-1)^t&{\rm if}& n\,{\rm is\,\, a\,\, product\,\, of}\,\,t\,\,{\rm distinct\,\, primes}.\\
\end{array} \right.$$
\end{lemma}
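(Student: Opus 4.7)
The plan is a short Möbius-inversion argument carried out in $\overline{\bbQ}$ (respectively $\overline{\bbQ_l}$, which works identically since both are characteristic-zero fields containing a full set of primitive $n$-th roots of unity under the coprimality hypothesis). The key observation is that every $n$-th root of unity has a unique exact order $d\mid n$, so partitioning the $n$-th roots of unity according to their order yields
$$\sum_{\zeta^n=1}\zeta \;=\; \sum_{d\mid n} R(d).$$
The left-hand side is the negative of the coefficient of $x^{n-1}$ in $x^n-1$, hence equals $1$ if $n=1$ and $0$ if $n>1$. Combined with the trivial base value $R(1)=1$, this determines $R$ uniquely via the recursion $R(n)=-\sum_{d\mid n,\,d<n}R(d)$ for $n>1$.

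Next I would let $\mu(n)$ denote the function defined by the right-hand side of the claim and verify that it satisfies the same two conditions: $\mu(1)=1$ is immediate, and it then suffices to check that $\sum_{d\mid n}\mu(d)=0$ for $n>1$. Writing $n=p_1^{a_1}\cdots p_t^{a_t}$ with $t\ge 1$, the divisors $d\mid n$ with $\mu(d)\ne 0$ are precisely the squarefree ones, in bijection with subsets $S\subseteq\{p_1,\ldots,p_t\}$ contributing $(-1)^{|S|}$ each. The sum collapses by the binomial theorem to $(1-1)^t=0$, and the lemma follows by strong induction on $n$.

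There is essentially no obstacle here; the only step with any content — the binomial cancellation $\sum_{S\subseteq\{p_1,\ldots,p_t\}}(-1)^{|S|}=0$ — is standard. An alternative, equally short route would extract $R(n)$ directly from the second-leading coefficient of the $n$-th cyclotomic polynomial via the factorization $x^n-1=\prod_{d\mid n}\Phi_d(x)$, but the Möbius-inversion approach above is the most self-contained.
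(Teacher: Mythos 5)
Your proof is correct, but it follows a different route from the paper's. You partition all $n$-th roots of unity by their exact order to get $\sum_{d\mid n}R(d)=1$ or $0$ according as $n=1$ or $n>1$, and then identify $R$ with the M\"obius function by checking that the squarefree-supported $(-1)^t$ formula satisfies the same recursion via the binomial cancellation $(1-1)^t=0$. The paper instead proves that $R$ is multiplicative: a primitive $n$-th root of unity factors uniquely as a product of a primitive $q_i^{r_i}$-th root of unity for each prime power $q_i^{r_i}$ exactly dividing $n$, so $R(n)=\prod_i R(q_i^{r_i})$, and one only has to compute $R(q)=-1$ and $R(q^r)=0$ for $r>1$ directly. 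The paper's argument is shorter once the unique factorization of primitive roots (essentially the Chinese remainder theorem on $\bbZ/n\bbZ$) is granted, and it localizes all computation at prime powers; yours avoids that input entirely at the cost of running an induction over all divisors and invoking the binomial identity. Both apply verbatim in $\overline{\bbQ_l}$ under the hypothesis $(l,n)=1$, which guarantees that $x^n-1$ is separable there, so there is no gap in either treatment.
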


\begin{proof} Let $n=q_1^{r_1}q_2^{r_2}\cdots q_t^{r_t}$ be the prime factorization. It is easy to see
that a primitive $n$-th root of unity can be factorized uniquely
into a product of a primitive $q_1^{r_1}$-th root, ... , a
primitive $q_t^{r_t}$-th root of unity, thus
$$R(n)=R(q_1^{r_1})R(q_2^{r_2})\cdots R(q_t^{r_t}).$$ For any prime $q$, $R(q)=-1$ and $R(q^r)=0$ if $r>1$.
\end{proof}

\section{Examples of Automorphisms}

In this section we will prove the if-part of Theorem
\ref{main-tame} and Theorem \ref{main-complex} by providing
examples. See Propositions \ref{C-exist} and \ref{exist}.

Let $X$ be a complex K3 surface. The transcendental lattice $T_X$
of $X$ is by definition the orthogonal complement of the
N\'eron-Severi group in the cohomology lattice $H^2(X,\bbZ)$. If
$X$ is not projective, then the image of the homomorphism $${\rm
Aut}(X)\to\GL(H^0(X, \Omega^2_X))\cong \bbC^*$$ is either trivial or an
infinite cyclic group (cf. Ueno \cite{Ueno}). Thus every
automorphism of finite order of a non-projective K3 surface is
symplectic, hence of order $\le 8$ (Nikulin \cite{Nik}).

Let $X$ be a projective complex K3 surface and let $g$ be an
automorphism of non-symplectic order $n$, i.e., its image in
$\GL(H^0(X, \Omega^2_X))$ is of order $n$. We may regard the
transcendental lattice $T_X$ as a $\bbZ[\langle g\rangle]$-module
via the natural action of $\langle g\rangle$ on $T_X$. Since $g$
has non-symplectic order $n$, $T_X$ becomes a free $\bbZ[\langle
g\rangle]/\langle\Phi_n(g)\rangle$-module where $\Phi_n(x)\in
\bbZ[x]$ is the $n$-th cyclotomic polynomial \cite{Nik}, thus
$T_X$ can be viewed as a free $\bbZ[\zeta_n]$-module via the
isomorphism $$\bbZ[\langle g\rangle]/\langle\Phi_n(g)\rangle\cong
\bbZ[\zeta_n]$$ where $\zeta_n$ is a primitive $n$-th root of
unity. In particular $\phi(n)$ divides $\rank~T_X$ where $\phi$ is
the Euler function. Since $X$ is projective, $\rank~T_X\le 21$ and
hence
$$\phi(n)\le 21.$$ Motivated by this, Kond\=o \cite{Ko}, Xiao
\cite{Xiao}, Machida and Oguiso \cite{MO} studied purely
non-symplectic automorphisms of complex K3 surfaces and proved
that a positive integer $n$ is the order of a purely
non-symplectic automorphism of a complex K3 surface if and only if
$\phi(n)\le 20$ and $n\neq 60$. Their examples are displayed in Example \ref{pnonsym}.
For convenience, we list all integers $n$ with $\phi(n)\le 21$
in Table \ref{21}. There is no integer $n$ with $\phi(n)=21.$

\bigskip
\begin{table}[ht]
\caption{The list of all integers $n$ with $\phi(n)\le
21$}\label{21}
\renewcommand\arraystretch{1.2}
$$
\begin{array}{|c|l|l|l|l|l|c|c|c|l|l|}
\hline
 \phi(n) & 20&18&16&12&10&8&6&4&2&1\\ \hline
&66&54&60&42&22&30&18&12&6&2\\
&50&38&48&36&11&24&14&10&4&1\\
n&44&27&40&28&&20&9&8&3&\\
&33&19&34&26&&16&7&5&&\\
&25&&32&21&&15&&&&\\
&&&17&13&&&&&&
\\ \hline
\end{array}
$$
\end{table}

\bigskip
From Table
\ref{21} we see that a positive integer $n$ satisfying  $n\neq 60$ and
$\phi(n)\le 20$ is a divisor of an element of the set
$$\calM_{pns}:=\{66,\, 50,\, 44,\, 54,\, 38,\, 48,\, 40,\, 34,\, 32,\, 42,\, 36,\, 28,\, 26,\, 30\}.$$

\begin{example}\label{pnonsym}$($Proposition 4 \cite{MO}, also Section 7 \cite{Ko},  Proposition 2 \cite{Og1}$)$
For each $n\in \calM_{pns}$, there is a K3 surface $X_n$ with a
purely non-symplectic automorphism $g_n$ of order $n$. The surface
$X_n$ is defined by the indicated Weierstrass equation for $n\neq
50$, $40$. The surface $X_{50}\subset \bbP(1,1,1,3)$ is defined as
a double plane branched along a smooth sextic curve and the
surface $X_{40}$ as (the minimal resolution
of) a double plane branched along the union of a line and a smooth
quintic curve.

\bigskip
\begin{enumerate}
\item $X_{66}: y^2=x^3+t(t^{11}-1),\qquad g_{66}(t,x,y)=(\zeta_{66}^{54}t, \zeta_{66}^{40}x, \zeta_{66}^{27}y)$;

\medskip
\item $X_{50}: w^2=x^6+xy^5+yz^5,\quad g_{50}(x,y, z, w)=(x, \zeta_{50}^{40}y, \zeta_{50}^2z, \zeta_{50}^{25}w)$;

\medskip
\item $X_{44}: y^2=x^3+x+t^{11},\qquad g_{44}(t,x,y)=(\zeta_{44}^{34}t, \zeta_{44}^{22}x, \zeta_{44}^{11}y)$;

\medskip
\item $X_{54}: y^2=x^3+t(t^{9}-1),\qquad g_{54}(t,x,y)=(\zeta_{54}^{12}t, \zeta_{54}^{4}x, \zeta_{54}^{33}y)$;

\medskip
\item $X_{38}: y^2=x^3+t^7x+t,\qquad g_{38}(t,x,y)=(\zeta_{38}^{4}t, \zeta_{38}^{14}x, \zeta_{38}^{21}y)$;

\medskip
\item $X_{48}: y^2=x^3+t(t^{8}-1),\qquad g_{48}(t,x,y)=(\zeta_{48}^{6}t, \zeta_{48}^{2}x, \zeta_{48}^{3}y)$;

\medskip
\item $X_{40}: w^2=x(x^4z+y^5-z^5),\quad g_{40}(x,y, z, w)=(x, \zeta_{40}^2y, \zeta_{40}^{10}z,
\zeta_{40}^{5}w);$

\medskip
\item $X_{34}: y^2=x^3+t^7x+t^2,\qquad g_{34}(t,x,y)=(\zeta_{34}^{4}t, \zeta_{34}^{14}x, \zeta_{34}^{21}y)$;

\medskip
\item $X_{32}: y^2=x^3+t^2x+t^{11},\qquad g_{32}(t,x,y)=(\zeta_{32}^{2}t, \zeta_{32}^{18}x, \zeta_{32}^{27}y)$;

\medskip
\item $X_{42}: y^2=x^3+t^5(t^{7}-1),\qquad g_{42}(t,x,y)=(\zeta_{42}^{18}t, \zeta_{42}^{2}x, \zeta_{42}^{3}y)$;

\medskip
\item $X_{36}: y^2=x^3+t^5(t^{6}-1),\qquad g_{36}(t,x,y)=(\zeta_{36}^{30}t, \zeta_{36}^{2}x, \zeta_{36}^{3}y)$;

\medskip
\item $X_{28}: y^2=x^3+x+t^{7},\qquad g_{28}(t,x,y)=(\zeta_{28}^{2}t, \zeta_{28}^{14}x, \zeta_{28}^{7}y)$;

\medskip
\item $X_{26}: y^2=x^3+t^5x+t,\qquad g_{26}(t,x,y)=(\zeta_{26}^{4}t, \zeta_{26}^{10}x, \zeta_{26}^{15}y)$;

\medskip
\item $X_{30}: y^2=x^3+(t^{10}-1),\qquad g_{30}(t,x,y)=(\zeta_{30}^{3}t, \zeta_{30}^{10}x,
y)$.
\end{enumerate}
\end{example}

\bigskip
We give an example of a K3 surface with an order 60 automorphism.

\begin{example}\label{5.12}
In char $p\neq 2,3,5$, there is a K3 surface with an automorphism
of order $60=5.12$
$$X_{60}: y^2+x^3+t^{11}-t = 0,\qquad g_{60}(t,x,y)=(\zeta_{60}^6t, \zeta_{60}^2x, \zeta_{60}^3y).$$
The surface $X$ has 12 type
 $II$-fibres at $t=\infty$,  $t^{11}-t=0$.
\end{example}

\begin{remark}
In \cite{K2} it has been shown that in characteristic $p\ge 0$, $p\neq 2, 3, 5$, for a K3 surface $X$ with an
automorphism $g$ of order 60 the pair $(X, \langle g\rangle)$ is isomorphic to the pair $(X_{60}, \langle g_{60}\rangle)$.
\end{remark}

A positive integer $n$ with $\phi(n)\le 20$ is a divisor of an element of the set
$$\calM:=(\calM_{pns}\setminus\{30\})\cup\{60\}.$$

\begin{proposition}\label{C-exist}  For any positive integer $N$ satisfying
$\phi(N)\le 20$, there is a complex K3 surface with an
automorphism of order $N$.
\end{proposition}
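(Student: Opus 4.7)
The plan is to combine the explicit examples already constructed in Examples~\ref{pnonsym} and~\ref{5.12} with a short divisibility observation, and then pass to powers to capture all admissible $N$. Recall that the discussion just before the proposition asserts
\[
\calM=(\calM_{pns}\setminus\{30\})\cup\{60\},
\]
and that every positive integer $N$ with $\phi(N)\le 20$ is a divisor of some element of $\calM$. The first step is to verify this purely combinatorial claim by inspecting Table~\ref{21} column by column: in each column (fixed value of $\phi(n)$) every entry divides one of the maxima listed in $\calM_{pns}\cup\{60\}$. The only entry whose divisibility is not immediate from $\calM_{pns}$ alone is $30$, and it is absorbed via $30\mid 60$.

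With this in hand the proposition becomes a one-line construction. For each $n\in\calM_{pns}$, Example~\ref{pnonsym} supplies a complex K3 surface $X_n$ carrying a purely non-symplectic automorphism $g_n$ of order exactly $n$; and Example~\ref{5.12} supplies $(X_{60},g_{60})$ with $g_{60}$ of order exactly $60$. Given any $N$ with $\phi(N)\le 20$, choose $n\in\calM$ with $N\mid n$ and form
\[
g_n^{\,n/N}\in\Aut(X_n).
\]
Since $g_n$ has order exactly $n$, the power $g_n^{n/N}$ has order exactly $N$, so $(X_n,g_n^{n/N})$ exhibits the desired complex K3 surface with an automorphism of order $N$.

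There is essentially no obstacle here, because all of the substantive work has already been packaged into Examples~\ref{pnonsym} and~\ref{5.12}. The only subtle point worth flagging is that $60$ must genuinely be adjoined to $\calM_{pns}$: no $n\in\calM_{pns}$ is divisible by $60$, so the divisors of $60$ (in particular $N=60$ itself, which is ruled out of the purely non-symplectic list by Machida--Oguiso) are unreachable as powers of any $g_n$ from Example~\ref{pnonsym}, and Example~\ref{5.12} is precisely what is needed to supply them. Conversely $30$ may safely be removed from $\calM_{pns}$ in defining $\calM$ since $30\mid 60$, so its divisors are already handled by $g_{60}^2$ and its further powers.
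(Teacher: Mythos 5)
Your proposal is correct and follows exactly the route the paper intends: the divisibility observation that every $N$ with $\phi(N)\le 20$ divides some element of $\calM=(\calM_{pns}\setminus\{30\})\cup\{60\}$, combined with the explicit complex examples $(X_n,g_n)$ of Examples~\ref{pnonsym} and~\ref{5.12} and passage to the power $g_n^{n/N}$. Your remark that $60$ must be adjoined separately (since no element of $\calM_{pns}$ is divisible by $60$, the order-$60$ case being unreachable from the purely non-symplectic examples) correctly identifies the one point of substance.
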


\begin{proposition}\label{exist}  For any positive integer $N$ satisfying
$\phi(N)\le 20$ and any prime $p$ coprime to $N$, there is a K3
surface in characteristic $p$ with an automorphism of order $N$. Furthermore there is a purely non-symplectic example of order $N$, if $N\neq 60$.
\end{proposition}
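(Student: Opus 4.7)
The plan is to establish Proposition \ref{exist} by reducing the explicit characteristic-zero examples modulo $p$ and then passing to a power to realize arbitrary divisors of the order. Each of the fourteen pairs $(X_M, g_M)$ in Example \ref{pnonsym} and the pair $(X_{60}, g_{60})$ of Example \ref{5.12} is cut out by polynomial equations over $\bbZ$, with the automorphism acting by coordinate scaling by roots of unity defined over $\bbZ[\zeta_M]$. For any prime $p$ coprime to $M$ with $p>2$, I would choose a prime $\mathfrak{p}\subset \bbZ[\zeta_M]$ above $p$ and form the reduction $(\bar X_M, \bar g_M)$ over the residue field $\overline{\bbF_p}$.

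The crux of the argument is to verify that each such reduction is still a K3 surface on which $\bar g_M$ acts faithfully with the same order $M$. For the twelve elliptic examples one computes the Weierstrass discriminant $\Delta(t)$, which in each case factors up to units as a product of the form $t^a(t^b-1)^c$ or $t^a P(t)$ with exponents depending only on $M$; the zeros of $\Delta(t)$ therefore remain distinct modulo any $p\nmid M$, so the Kodaira fibre configuration lifts unchanged and the minimal smooth model of the reduction is again a K3 surface. The scaling automorphism keeps its order because $\zeta_M$ remains a primitive $M$-th root of unity in $\overline{\bbF_p}^{\times}$ whenever $p\nmid M$. For the two double-cover examples $X_{50}$ and $X_{40}$, a direct inspection confirms that the sextic (respectively line-plus-smooth-quintic) branch locus remains of the correct type under reduction. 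The remaining case $p=2$, where naive Weierstrass reduction may create pathologies, is handled separately by the ad hoc constructions of Example \ref{p2}.

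Granting good reduction, the general statement follows formally. For any $N$ with $\phi(N)\le 20$ and $p\nmid N$, the tabulation preceding Proposition \ref{C-exist} yields some $M\in\calM$ with $N\mid M$, and by direct inspection of the primes occurring in the elements of $\calM$ one may arrange $p\nmid M$ as well (the case $p=2$ excepted, which falls under Example \ref{p2}). Then $\bar g_M^{M/N}$ is an automorphism of $\bar X_M$ of order exactly $N$. If moreover $N\ne 60$, then $M$ can be chosen in $\calM_{pns}$: since $g_M$ acts on $H^0(X_M,\Omega^2)$ as $\zeta_M$, its $(M/N)$-th power acts as $\zeta_N$, yielding a purely non-symplectic automorphism of order $N$ on the reduction. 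The main technical obstacle is the somewhat tedious case-by-case discriminant verification in the second step; the characteristic-two case must be delegated to Example \ref{p2} precisely because the direct reduction argument fails there.
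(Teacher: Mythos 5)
Your overall strategy is exactly the paper's: the paper likewise reduces the pairs $(X_n,g_n)$ of Examples \ref{pnonsym} and \ref{5.12} modulo $p$, observes that every admissible $N$ divides some element of a set $\calM_p$ of orders whose corresponding surfaces have good reduction (your ``choose $M\in\calM$ with $N\mid M$ and $p\nmid M$'' selects the same surfaces), passes to the power $g_M^{M/N}$, and delegates $p=2$ to Example \ref{p2}. The purely non-symplectic refinement via $\calM_{pns}$ is also as intended.

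There is, however, one step in your good-reduction verification that fails, namely the claim that for the elliptic examples ``the zeros of $\Delta(t)$ remain distinct modulo any $p\nmid M$, so the Kodaira fibre configuration lifts unchanged.'' This is false in characteristic $3$. For a Weierstrass equation $y^2=x^3+a(t)x+b(t)$ the discriminant $-64a^3-432b^2$ reduces mod $3$ to $-a(t)^3$ (up to a unit), so the divisor of $\Delta$ bears no resemblance to its characteristic-zero counterpart: e.g.\ for $X_{44}:y^2=x^3+x+t^{11}$ one gets $\Delta\equiv -1$, with the entire Euler number $24$ concentrated in a single wildly ramified additive fibre at $t=\infty$ rather than in the $22$ fibres of type $I_1$ seen in characteristic $0$. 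The surfaces in question (those $X_M$ with $3\nmid M$, i.e.\ $M\in\{44,38,34,32,28,26\}$ among the elliptic ones) do still reduce to K3 surfaces in characteristic $3$ — this is what the paper asserts with its ``it is easy to check'' — but establishing that requires running Tate's algorithm (with attention to wild ramification) or otherwise checking that the Weierstrass model acquires at worst rational double points; it does not follow from persistence of the characteristic-zero fibre configuration. For $p\ge 5$ your discriminant argument is sound, since the formula for $\Delta$ and the tame classification of fibres carry over unchanged. So the gap is localized to $p=3$, but as written that case of your verification does not go through.
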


\begin{proof} In each of the above examples the K3 surface $X_n$ is defined over the
integers. Fix a prime $p$. Then a positive integer $n$ with
$\phi(n)\le 20$, not divisible by $p$, is a divisor of an element
of the set ${\calM}_{p}$, where
$${\calM}_{p}:=\left\{\begin{array}{lll} {\calM}&{\rm if}& p>19,\\
 {\calM}\setminus\{2p\}&{\rm if}& p=13,\, 17,\, 19,\\
  {\calM}\setminus\{66,\, 44\}&{\rm if}& p=11,\\
   {\calM}\setminus\{42,\, 28\}&{\rm if}& p=7,\\
    {\calM}\setminus\{50,\,40,\, 60\}&{\rm if}& p=5,\\
    \{50,\,44,\, 38,\, 40,\, 34,\, 32,\, 28,\, 26\}&{\rm if}& p=3,\\
    \{33,\, 25,\, 27,\, 19,\, 17,\, 21,\, 13,\, 15\}&{\rm if}& p=2.
\end{array} \right.$$
When $p>2$, it is easy to check that for each $n\in \calM_{p}$ the pair $(X_n,
g_n)$ has a good reduction mod $p$, i.e., the surface $X_n$ defines
a K3 surface in characteristic $p$ and $g_n$ defines an
automorphism of order $n$. When $p=2$, the surface $X_{2n}$, $n\in \calM_{2}$, may have a non-rational double point, so does not define a K3 surface. In that case we provide a new example as below.
\end{proof}

\begin{example}\label{p2}
Assume that $p=2$. 
For each $n\in \calM_{2}$, there is a K3 surface $Y_n$ with a
purely non-symplectic automorphism $g_n$ of order $n$. The surface $Y_{25}\subset \bbP(1,1,1,3)$ is a weighted hypersurface, and $Y_{19}$ and $Y_{13}$ are given by  quasi-elliptic fibrations.

\bigskip
\begin{enumerate}
\item $Y_{33}: y^2+t^6y=x^3+t(t^{11}-1),\qquad g_{33}(t,x,y)=(\zeta_{33}^{21}t, \zeta_{33}^{7}x, \zeta_{33}^{27}y)$;

\medskip
\item $Y_{25}: w^2+x^3w=x^6+xy^5+yz^5,\quad g_{25}(x,y, z, w)=(x, \zeta_{25}^{15}y, \zeta_{25}^2z, w)$;

\medskip
\item $Y_{27}: y^2+t^5y=x^3+t(t^{9}-1),\qquad g_{27}(t,x,y)=(\zeta_{27}^{12}t, \zeta_{27}^{4}x, \zeta_{27}^{6}y)$;

\medskip
\item $Y_{19}=X_{38}: y^2=x^3+t^7x+t,\qquad g_{19}(t,x,y)=(\zeta_{19}^{4}t, \zeta_{19}^{14}x, \zeta_{19}^{2}y)$;

\medskip
\item $Y_{17}: y^2+ty=x^3+t^7x+t^2,\qquad g_{17}(t,x,y)=(\zeta_{17}^{4}t, \zeta_{17}^{14}x, \zeta_{17}^{4}y)$;

\medskip
\item $Y_{21}: y^2+t^6y=x^3+t^5(t^{7}-1),\qquad g_{21}(t,x,y)=(\zeta_{21}^{18}t, \zeta_{21}^{2}x, \zeta_{21}^{3}y)$;

\medskip
\item $Y_{13}=X_{26}: y^2=x^3+t^5x+t,\qquad g_{13}(t,x,y)=(\zeta_{13}^{4}t, \zeta_{13}^{10}x, \zeta_{13}^{2}y)$;

\medskip
\item $Y_{15}: y^2+t^5y=x^3+(t^{10}-1),\qquad g_{15}(t,x,y)=(\zeta_{15}^{3}t, \zeta_{15}^{10}x, y)$.
\end{enumerate}
\end{example}

\section{The Tame Case}

In this section we will prove the following:

\begin{proposition}\label{tame} Let $k$ be an algebraically closed field of
characteristic $p>0$.  Let $N$ be a positive integer not divisible
by $p$. If $N$ is the order of an automorphism of a K3 surface
$X/k$, then $\phi(N)\le 20$ where $\phi$ is the Euler
function.
\end{proposition}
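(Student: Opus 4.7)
The plan is to study the characteristic polynomial $\chi_{g^*}(t) \in \bbZ[t]$ of the action of $g$ on the $22$-dimensional $\bbQ_l$-vector space $H_{\rm et}^2(X,\bbQ_l)$. Four results from Section~2 apply in combination: (i)~by Theorem~\ref{faith} the representation $g \mapsto g^*$ is faithful, so $g^*$ has order exactly $N$; (ii)~by Proposition~\ref{integral}(1), $\chi_{g^*}$ has integer coefficients, so Lemma~\ref{Weyl} forces a cyclotomic factorization $\chi_{g^*}(t) = \prod_{d \mid N} \Phi_d(t)^{m_d}$ with $\sum_{d \mid N} m_d\,\phi(d) = 22$; (iii)~an invariant ample class (Proposition~\ref{integral}(2)) forces $m_1 \ge 1$; and (iv)~letting $n$ denote the non-symplectic order of $g$, Proposition~\ref{integral}(4) combined with Lemma~\ref{Weyl} gives $m_n \ge 1$.

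From (iii) and (iv), $\phi(n) + 1 \le m_n\,\phi(n) + m_1 \le 22$, so $\phi(n) \le 21$. Since Table~\ref{21} contains no integer with Euler value $21$, this sharpens to $\phi(n) \le 20$, which already settles the purely non-symplectic case $N = n$. In general, set $m = N/n$, the order of the symplectic subgroup $\langle g^n \rangle$; by Proposition~\ref{sym} one has $m \in \{1,2,\ldots,8\}$, and Lemma~\ref{Lefschetz} rigidly prescribes the eigenvalue list (and hence the trace) of $(g^n)^* = (g^*)^n$. It remains to rule out all pairs $(m,n)$ with $2 \le m \le 8$, $\phi(n) \le 20$, and $\phi(mn) > 20$.

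For each such pair I would exhibit the inconsistency of the linear system
\[
\sum_{d \mid mn} m_d\,\phi(d) \;=\; 22, \qquad \sum_{d \mid mn} m_d \cdot c_d(n) \;=\; \mathrm{Tr}\bigl((g^n)^* \mid H_{\rm et}^2\bigr),
\]
in non-negative integer unknowns $(m_d)_{d \mid mn}$, subject to $m_1 \ge 1$, $m_n \ge 1$, and $\mathrm{lcm}\{d : m_d > 0\} = mn$. Here $c_d(n) := \sum_{\zeta\,\text{prim.}\,d\text{-th}} \zeta^n \in \bbZ$ is the Ramanujan sum, and the right-hand side of the trace equation is the explicit integer $e(X^{g^n}) - 2$ computed via Proposition~\ref{trace} and Proposition~\ref{sym}.

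The main obstacle is the breadth of this case analysis, which is kept manageable by a budgeting argument: since $m_n \ge 1$ consumes $\phi(n)$ of the $22$-dimensional budget, only $21 - \phi(n)$ remains for cyclotomic factors $\Phi_d$ with $d \neq 1, n$. In many forbidden pairs this residual cannot accommodate any $\Phi_d$ with $d \mid mn$ but $d \nmid n$, so $\mathrm{lcm}\{d : m_d > 0\}$ must divide $n$ in violation of faithfulness (e.g.\ $(m,n) = (2,66)$ leaves room only for $\Phi_2$, yet $\mathrm{lcm}(2,66) = 66 \ne 132$). For the handful of borderline pairs that escape the pure dimension count, the trace equation pins down a unique candidate configuration whose trace does not match the value dictated by Lemma~\ref{Lefschetz}, completing the exclusion.
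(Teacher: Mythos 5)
Your overall framework --- the cyclotomic factorization of $\chi_{g^*}$, the budget $\sum_{d}m_d\phi(d)=22$ with $m_1\ge 1$ and $m_n\ge 1$, the bound $m\le 8$ from Proposition \ref{sym}, and the rigid eigenvalue list of $(g^n)^*$ from Lemma \ref{Lefschetz} --- is exactly the paper's setup, and it does dispose of every pair with $\phi(n)>13$ just as the paper does (there the multiplicity of the eigenvalue $1$ in $[g^{n*}]$ already overflows the value $14$ allowed by Lemma \ref{Lefschetz}). The gap is in your final step. For the genuinely hard pairs, the dimension count together with the trace of $(g^n)^*$ does \emph{not} produce an inconsistency: the surviving configurations are compatible with the \emph{full} eigenvalue list of $(g^n)^*$, not merely its trace, so no condition involving only the power $g^n$ can kill them. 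Concretely, for ${\rm ord}(g)=3.15$ (order $45$, $\phi(45)=24>20$) the admissible list is $[g^*]=[1.2,\ \zeta_{15}:8,\ (\zeta_9:6).2]$, whose $15$-th power is exactly $[1,\,1.9,\,(\zeta_3:2).6]$, the list of an order-$3$ symplectic automorphism; your system is therefore consistent and order $45$ survives. The same happens for $2.42$, $2.36$, $2.28$, $2.26$ (orders $84,72,56,52$), for $3.30$, $3.24$, for $4.18$, $4.14$, and for $6.12$, all of which have $\phi(N)>20$ and must be excluded.

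What is missing is the paper's Lemma \ref{nsym4}: since $g^n$ is symplectic, ${\rm Fix}(g^i)\subset{\rm Fix}(g^n)$ is a finite set for every $i$ dividing $n$, whence $0\le e(g^i)=\Tr(g^{i*}|H^*(X))\le e(g^n)$ by the Lefschetz formula (Proposition \ref{trace}). It is the traces of these \emph{intermediate} powers, not of $g^n$ itself, that yield the contradictions: in the order-$45$ example $\Tr(g^{3*}|H^*(X))=-4<0$; for $2.42$ one computes $\Tr(g^{21*}|H^*(X))\le -8<0$; for $3.24$ one gets $\Tr(g^{4*}|H^*(X))=8>e(g^{24})=6$, violating the upper bound. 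In a few configurations (e.g.\ the analogue of $2.14$) even this is not enough and one must argue combinatorially about the action of $g$ on the finite set ${\rm Fix}(g^n)$. Note also that several exclusions are most naturally obtained by passing to a power of $g$ whose order already satisfies $\phi\le 20$ (e.g.\ $4.18$ reduces to $4.9$ with $\phi(36)=12$, and $6.12$ to $6.6$), so restricting the case analysis to pairs with $\phi(mn)>20$ does not in fact shrink the work. Until the positivity and monotonicity of $e(g^i)$ over divisors $i$ of $n$ is added to your toolkit, the case analysis cannot close.
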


Let $g$ be a tame automorphism of order $N$ of a K3 surface, i.e., the order $N$ is prime to the characteristic $p$.
Assume that
$${\rm ord}(g)=N=m.n$$ i.e., $g$ is of order $N=mn$ and its action
 on the space $H^0(X, \Omega^2_X)$ has order $n$. Then $g^n$ is symplectic of order $m$, hence by Proposition
\ref{sym},
$$ m\le 8.$$
By Proposition \ref{integral}(4),  $g^*|H_{\rm et}^2(X,\bbQ_l)$ has $\zeta_n\in \overline{\bbQ_l}$
as an eigenvalue. The second
cohomology space has dimension 22 and  $g^*$ has 1 as an
eigenvalue corresponding to an invariant ample divisor, so by Proposition \ref{integral} and Lemma \ref{Weyl},
$$\phi(n)\le 21.$$  See Table \ref{21} for all integers $n$ with $\phi(n)\le 21$.

We will prove $\phi(N)\le 20$ in the following lemmas
\ref{pn>13}---\ref{pn=2}.

\begin{lemma}\label{pn>13} If $\phi(n)> 13$, then $m=1$.
\end{lemma}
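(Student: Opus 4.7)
The plan is to confront an eigenvalue count forced by $\phi(n)>13$ with the rigid spectrum of a symplectic automorphism listed in Lemma \ref{Lefschetz}. Since $g^*|H^0(X,\Omega_X^2)$ has order $n$, Proposition \ref{integral}(4) shows that $\zeta_n$ is an eigenvalue of $g^*|H_{\rm et}^2(X,\bbQ_l)$. By Proposition \ref{integral}(1) the characteristic polynomial of $g^*$ has integer coefficients, so Lemma \ref{Weyl} upgrades this to: every one of the $\phi(n)$ primitive $n$-th roots of unity appears as an eigenvalue (with multiplicity at least one). Combining this with the invariant-ample eigenvalue $1$ from Proposition \ref{integral}(2), and using that $n>1$ makes the eigenvalue $1$ distinct from every primitive $n$-th root, the spectrum of $g^*$ on $H_{\rm et}^2$ contains at least $\phi(n)+1$ eigenvalues whose $n$-th powers equal $1$.

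Consequently the eigenvalue $1$ of $(g^n)^*|H_{\rm et}^2$ has multiplicity at least $\phi(n)+1$. Inspecting Table~\ref{21} shows that $\phi$ never takes the values $14$ or $15$, so the hypothesis $\phi(n)>13$ actually gives $\phi(n)\in\{16,18,20\}$, and the multiplicity of $1$ in $(g^n)^*$ is at least $17$. Now $g^n$ is symplectic of order $m\le 8$, and Lemma \ref{Lefschetz} lists its entire spectrum in each case: the multiplicity of $1$ is $14, 10, 8, 6, 6, 4, 4$ for $m=2,3,4,5,6,7,8$ respectively, hence never exceeds $14$ once $m\ge 2$. The bound $17\le M_1((g^n)^*)\le 14$ is impossible, forcing $m=1$.

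The argument is essentially a tight counting estimate, so no step is a serious obstacle; the only thing to check carefully is that one is genuinely adding distinct contributions to the multiplicity of $1$ in $(g^n)^*$, which is clear because the primitive $n$-th roots of unity are distinct from $1$ and from one another, and each acquires at least one eigenspace by Weyl's lemma.
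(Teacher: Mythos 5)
Your argument is correct and is essentially identical to the paper's proof: both deduce $[1,\,\zeta_n:\phi(n)]\subset[g^*]$ from Proposition \ref{integral} and Lemma \ref{Weyl}, conclude that $1$ occurs in $[g^{n*}]$ with multiplicity at least $\phi(n)+1>14$, and compare with the spectra in Lemma \ref{Lefschetz}, where the multiplicity of $1$ is at most $14$ for every symplectic order $m\ge 2$. Your extra observation that $\phi(n)\ge 16$ is harmless but unnecessary, since the hypothesis $\phi(n)>13$ already suffices.
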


\begin{proof}
The primitive $n$-th root $\zeta_n$ is an eigenvalue of $g^*|H_{\rm et}^2(X,\bbQ_l)$, hence
$$[1,\,\zeta_n:\phi(n)]\subset[g^*]$$
where the first eigenvalue  $1\in [g^*]$ corresponds to an
invariant ample divisor (Proposition \ref{integral}),
$\zeta_n:\phi(n)$ means all primitive $n$-th roots of unity and
$\phi(n)$ indicates the number of them. Thus we infer that $g^{n}$
is symplectic of order $m$ such that
$$[1,\,1.\phi(n)]\subset[g^{n*}].$$
If $\phi(n)> 13$, then  $m=1$ by Lemma \ref{Lefschetz}.
\end{proof}

\begin{lemma}\label{pn=12} Assume that $\phi(n)=12$. Then $m=1$ if $n=42,\,\,36,\,\,28,\,\,26$, and $m\le 2$ if $n=21,\,\,13$.
\end{lemma}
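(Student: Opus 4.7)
The plan is twofold: first, to bound $m \le 2$ by a multiplicity-of-$1$ count on $g^{n*}$, and then, for the even values $n \in \{42, 36, 28, 26\}$, to rule out $m = 2$ by a negative Euler-characteristic contradiction applied to $g^{n/2}$.

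For the bound $m \le 2$: using Proposition \ref{integral}(4) and Lemma \ref{Weyl}, together with the hypothesis that $g^*|H^0(X, \Omega_X^2)$ has order $n$, I conclude that all $\phi(n) = 12$ primitive $n$-th roots of unity together with the ample eigenvalue $1$ appear in $[g^*]$, so the eigenvalue $1$ of $g^{n*}$ has multiplicity at least $13$. Comparing with Lemma \ref{Lefschetz} (where the multiplicity of $1$ in $g^{n*}$ for symplectic $g^n$ of order $m = 1, 2, \ldots, 8$ is $22, 14, 10, 8, 6, 6, 4, 4$, respectively), only $m \in \{1, 2\}$ is compatible.

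Now suppose $m = 2$ and $n \in \{42, 36, 28, 26\}$. Then $g$ has order $2n$ and $g^n$ is a symplectic involution with $8$ fixed points by Proposition \ref{sym}. The tame order-$4$ automorphism $g^{n/2}$ satisfies $(g^{n/2})^2 = g^n$, hence ${\rm Fix}(g^{n/2}) \subset {\rm Fix}(g^n)$ is finite of cardinality at most $8$, and Proposition \ref{trace} gives $e(g^{n/2}) \ge 0$. Refining the first step: since $g^{n*}$ has $1$ of multiplicity exactly $14$ (Lemma \ref{Lefschetz} for symplectic order $2$), the list $[g^*]$ consists of the ample $1$, the $12$ primitive $n$-th roots (each of multiplicity $1$), one ``extra'' eigenvalue of multiplicity $1$ whose order divides $n$ (by Lemma \ref{Weyl} this extra is $\pm 1$), and $8$ further eigenvalues $\lambda$ with $\lambda^n = -1$.

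Raising to the $(n/2)$-th power: each primitive $n$-th root $\zeta_n^k$ with $\gcd(k, n) = 1$ and $n$ even has $k$ odd, so $\zeta_n^{(n/2)k} = -1$ ($12$ copies); the ample and the extra produce values in $\{\pm 1\}$; and the remaining $8$ eigenvalues yield square roots of $-1$, i.e., $\pm i$. By Proposition \ref{integral}(1) the characteristic polynomial of $g^{n/2*}$ has integer coefficients, so the multiplicities of $i$ and $-i$ agree in $g^{n/2*}$; their sum being $8$ forces each to equal $4$, and their trace contribution cancels. Writing $u \in \{1, 2\}$ for the multiplicity of $1$ in $g^{n/2*}$ (contributed only by the ample and the extra, since all $12$ primitive $n$-th roots contribute to the multiplicity of $-1$), I obtain $\Tr(g^{n/2*}|H^2) = u - (14 - u) = 2u - 14 \le -10$, hence $e(g^{n/2}) \le -8 < 0$, contradicting $e(g^{n/2}) \ge 0$. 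For $n \in \{21, 13\}$, $n/2$ is not an integer and this argument does not apply; the bound $m \le 2$ follows solely from the first step. The main subtlety is that the $8$ eigenvalues with $\lambda^n = -1$ can a priori lie in several Galois clusters (primitive $4$th, $8$th, $12$th, or $24$th roots, depending on $n$), but the integer-coefficient constraint uniformly sidesteps an enumeration.
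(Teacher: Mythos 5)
Your proposal is correct and follows essentially the same route as the paper: the bound $m\le 2$ comes from the fact that $[1,\,\zeta_n:\phi(n)]\subset[g^*]$ forces $1$ to have multiplicity $\ge 13$ in $[g^{n*}]$, and for even $n$ the case $m=2$ is excluded by computing $[g^{(n/2)*}]=[1,\,-1.12,\,\pm 1,\,(\zeta_4:2).4]$ and finding $e(g^{n/2})=\Tr(g^{(n/2)*}|H^*(X))<0$, contradicting Lemma \ref{nsym4}. The only cosmetic difference is that you obtain the vanishing of the $\pm i$ contribution directly from integrality of the characteristic polynomial rather than listing the eigenvalues, which is the same mechanism the paper uses implicitly.
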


\begin{proof} Since $[1,\,\zeta_n:\phi(n)]\subset[g^{*}]$, $[1,\,1.12]\subset[g^{n*}].$ Since $g^{n}$ is symplectic of
order $m$, $m\le 2$ by Lemma \ref{Lefschetz}.

 Assume
$m=2$. Then $g^{n}$ is symplectic of order 2 and by Lemma
\ref{Lefschetz}
$$[g^{n*}]=[1,\,1.12,\,1,\,-1.8].$$ Assume that $n=2n'$.
Then $g^{n'}$ is a non-symplectic automorphism of order $4$ such
that its square is symplectic. Since $\zeta_n^{n'}=-1$, we infer
that
$$[g^{n'*}]=[1,\, -1.12,\,\pm 1,\, (\zeta_4:2).4].$$
In any case, the Lefschetz fixed point formula (Proposition
\ref{trace}) yields $$e(g^{n'}):=e({\rm Fix}(g^{n'}))=\Tr(g^{n'*}|H^*(X)) < 0,$$ contradicting
Lemma \ref{nsym4}.
\end{proof}

\begin{lemma}\label{pn=10} Assume that $\phi(n)=10$. Then $m=1$ if $n=22$ and $m\le 2$ if $n=11$.
\end{lemma}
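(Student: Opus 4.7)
The strategy mirrors the proof of Lemma \ref{pn=12}. First I would use Proposition \ref{integral}(4) together with Weyl's Lemma \ref{Weyl} to conclude that all ten primitive $n$-th roots of unity appear as eigenvalues of $g^{*}|H^2_{\rm et}(X,\bbQ_l)$; combined with the invariant ample class from Proposition \ref{integral}(2), this yields $[1,\,\zeta_n:10]\subset [g^{*}]$. Raising to the $n$-th power, $g^{n*}$ then has at least $11$ eigenvalues equal to $1$. Comparing with the eigenvalue lists in Lemma \ref{Lefschetz}, each symplectic order $m\in\{3,4,5,6,7,8\}$ produces at most $10$ eigenvalues equal to $1$, so all of these are ruled out. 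Hence $m\le 2$ for both $n=11$ and $n=22$, which already settles the claim when $n=11$.

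It remains to rule out $m=2$ when $n=22$. Assume for contradiction that $g$ has order $44$ with $g^{22}$ symplectic of order $2$, so by Lemma \ref{Lefschetz} $[g^{22*}]=[1.14,\,-1.8]$. I would focus on the intermediate power $g^{11}$, which has order $4$. Since ${\rm ord}(g^*|H^0(X,\Omega^2_X))=22$ and $\gcd(11,22)=11$, the action of $g^{11}$ on regular $2$-forms is by $\zeta_{22}^{11}=-1$, so $g^{11}$ is non-symplectic of order $2$ with symplectic square $g^{22}$. The eigenvalues of $g^{11*}$ are thus fourth roots of unity; by Lemma \ref{Weyl} the pair $\pm i$ occurs with a common multiplicity $r_i$.

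Squaring the eigenvalue list and matching with $[g^{22*}]$ gives $r_1+r_{-1}=14$ and $2r_i=8$, so $r_i=4$. The key step is to track where the ten primitive $22$nd roots of unity in $[g^*]$ land under the $11$-th power: each goes to $\zeta_{22}^{11}=-1$, so $r_{-1}\ge 10$ and hence $r_1\le 4$. The Lefschetz fixed-point formula (Proposition \ref{trace}) then yields
$$e(g^{11})=\Tr(g^{11*}|H^*(X))=2+r_1-r_{-1}\le 2+4-10=-4<0,$$
contradicting Lemma \ref{nsym4}, which forces $e(g^{11})\ge 0$ because ${\rm Fix}(g^{11})\subset {\rm Fix}(g^{22})$ is finite and a finite set has non-negative Euler characteristic.

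The only step requiring real care is the bookkeeping of how the primitive $22$nd roots in $[g^*]$ propagate under the $11$-th and $22$-nd power maps; once that is in place, the negativity of $e(g^{11})$ is immediate and the contradiction with Lemma \ref{nsym4} is the same conceptual mechanism used in the proof of Lemma \ref{pn=12}.
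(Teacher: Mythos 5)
Your proposal is correct and follows essentially the same route as the paper: the bound $m\le 2$ comes from counting the eigenvalue $1$ in $[g^{n*}]$ against Lemma \ref{Lefschetz}, and the case $m=2$, $n=22$ is excluded by noting that the ten primitive $22$nd roots of unity in $[g^*]$ all map to $-1$ under the $11$th power, forcing $\Tr(g^{11*}|H^*(X))<0$ and contradicting Lemma \ref{nsym4}. The only cosmetic difference is that you make the multiplicities $r_{\pm 1}, r_{\pm i}$ explicit where the paper writes the list $[g^{11*}]=[1,\,-1.10,\,(\zeta_4:2).4,\,\pm 1,\,\pm 1,\,\pm 1]$ directly; also note that $g^{11}$ has order $4$ as an automorphism (it is its action on $H^0(X,\Omega_X^2)$ that has order $2$), though this does not affect your argument.
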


\begin{proof}
Since $[1,\,\zeta_n:\phi(n)]\subset[g^{*}]$,
$[1,\,1.10]\subset[g^{n*}].$ Since $g^{n}$ is symplectic of order
$m$, $m\le 2$ by Lemma \ref{Lefschetz}.

Assume that $m=2$ and $n=22$. Then $g^{11}$ is non-symplectic of order 4 with a symplectic square. Since
$$[g^{22*}]=[1,\, 1.10,\,-1.8,\,1.3],$$
we infer that $$[g^{11*}]=[1,\, -1.10,\,(\zeta_4:2).4,\,\pm 1,\,\pm 1,\,\pm 1].$$
In any case, $\Tr (g^{11*}|H^*(X))< 0$,  contradicting Lemma \ref{nsym4}.
\end{proof}

\begin{lemma}\label{pn=8} Assume $\phi(n)=8$.
\begin{enumerate}
\item  If $n=30,\, 24,\, 20$ or $16$, then $m=1$.
\item If $n=15$, then $m\le 2$.
\end{enumerate}
\end{lemma}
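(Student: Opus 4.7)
The plan is to follow the blueprint of Lemmas \ref{pn=12} and \ref{pn=10}, refining the symplectic order $m$ by eigenvalue counting on $[g^{n*}]$ and then forcing a negative Lefschetz trace on a well-chosen intermediate power of $g$.

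First I would bound $m \le 3$: since $[1,\zeta_n:8]\subset[g^*]$ by Weyl (Lemma \ref{Weyl}), the list $[g^{n*}]$ contains at least $9$ copies of $1$, and comparison with Lemma \ref{Lefschetz} rules out $m\in\{4,5,6,7,8\}$, which give only $8,6,6,4,4$ ones, respectively.

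For each remaining pair $(n,m)$ with $m\in\{2,3\}$ the strategy is uniform. Parametrize $[g^*]$ by non-negative integer multiplicities $n_d$ of the Galois orbit $[\zeta_d:\phi(d)]$ for each $d\mid N$, subject to $\sum_d n_d\phi(d)=22$, $n_1\ge 1$ (invariant ample), and $n_n\ge 1$ (from $\zeta_n\in[g^*]$). Matching $[g^*]$ to the prescribed $[g^{n*}]$ of Lemma \ref{Lefschetz} via $\mu\mapsto\mu^n$ yields linear equations in the $n_d$, reducing candidates to a short list. Then for a carefully chosen power $g^k$ I would compute $e(g^k)=\Tr(g^{k*}\mid H^*(X))$ via Proposition \ref{trace} using the trace formula $\sum_d R(d)n_d$ (Lemma \ref{sum}). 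The sought contradiction is either $e(g^k)<0$ directly, impossible by non-negativity of the Euler characteristic of the smooth fixed locus of a tame automorphism, or a violation of the bound $0\le e(g^k)\le e(g^n)$ from Lemma \ref{nsym4} when $g^k$ has a symplectic power. The borderline case $(n,m)=(16,2)$, i.e.\ $N=32$, is handled separately by a direct count: faithfulness (Theorem \ref{faith}) combined with Weyl forces all $\phi(32)=16$ primitive $32$nd roots into $[g^*]$, giving at least $16+8+1=25>22$ eigenvalues, a contradiction.

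The main obstacle is the case-by-case choice of the power $k$ and the possible need to iterate: typically Lemma \ref{nsym4} is first applied at $k=n/2$ or $n/3$ to pin down further multiplicities, and then one exhibits a power making $g^k$ purely non-symplectic with strictly negative Lefschetz trace. For example, for $(n,m)=(30,2)$ the step $k=15$ fixes the multiplicities of the primitive $4$-, $12$- and $20$th-root orbits via the Lemma \ref{nsym4} bound $e(g^{15})\le e(g^{30})=8$, and then $k=20$ gives $g^{20}$ purely non-symplectic of order $3$ with $e(g^{20})=-6<0$; analogous choices $k=n/3$ or $k=n/5$ dispatch the other cases.
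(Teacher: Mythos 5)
Your overall framework coincides with the paper's: you bound $m\le 3$ by counting occurrences of the eigenvalue $1$ in $[g^{n*}]$ against Lemma \ref{Lefschetz}, then eliminate each pair $(n,m)$ with $m\in\{2,3\}$ by matching Galois orbits of eigenvalues of $g^*$ to the prescribed $[g^{n*}]$ and computing Lefschetz traces of intermediate powers; your dimension count $16+8+1=25>22$ for $N=32$ is a correct variant of the paper's argument for that case. The genuine gap is in your stated dichotomy for producing contradictions. The claim that $e(g^k)<0$ is ``impossible by non-negativity of the Euler characteristic of the smooth fixed locus of a tame automorphism'' is false: a tame non-symplectic automorphism can fix a smooth curve of genus $\ge 2$, so $e(g^k)$ can be negative (Lemma \ref{nsym2} exhibits an involution with $e(X^h)=-16$). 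Negativity is only contradictory when ${\rm Fix}(g^k)$ is known to be finite, i.e.\ when $k$ divides an exponent $n$ with $g^n$ symplectic, which is exactly the hypothesis of Lemma \ref{nsym4}; every trace contradiction in the paper's proof is of this restricted form (and in several cases, e.g.\ ${\rm ord}(g)=3.24$ where $e(g^4)=8>6=e(g^{24})$, it is the upper bound $e(g^k)\le e(g^n)$ rather than negativity that does the work).

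Your worked example $(n,m)=(30,2)$ fails on precisely this point. First, the step $k=15$ cannot separate the multiplicities of the $\zeta_4$-, $\zeta_{12}$- and $\zeta_{20}$-orbits, since all three have fifteenth power equal to a primitive fourth root of unity and contribute identically to $\Tr(g^{15*})$; what $e(g^{15})=0$ actually pins down is the block of five eigenvalues of odd order, forcing it to be a combination of $1$, $\zeta_3:2$, $\zeta_5:4$. Second, $g^{20}$ is purely non-symplectic of order $3$ and $20\nmid 30$, so neither Lemma \ref{nsym4} nor any non-negativity constraint applies to $e(g^{20})$: even in the sub-cases where $e(g^{20})$ comes out negative, this is consistent with $g^{20}$ fixing a curve of genus $\ge 2$ and yields no contradiction. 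The paper instead takes $k=3$ or $k=5$ (both dividing $30$) and derives $e(g^k)\ge 10>8=e(g^{30})$, a violation of the upper bound in Lemma \ref{nsym4}. Your method is repairable --- restrict the exponents $k$ to divisors of the symplectic exponent and exploit both inequalities $0\le e(g^k)\le e(g^n)$ --- but as written the key principle and the illustrative computation are both wrong.
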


\begin{proof} Since $[1,\,\zeta_n:\phi(n)]\subset[g^{*}]$, $[1,\,1.8]\subset[g^{n*}].$ Since $g^{n}$
is symplectic of order $m$, $m\le 3$ by Lemma \ref{Lefschetz}.

\medskip Claim:  ${\rm ord}(g)\neq 2.16$.\\ On the contrary,
suppose that ${\rm ord}(g)= 2.16$. Then $-1\in [g^{16*}]$. Thus
$\zeta_{32}\in [g^*]$, then $[-1.\phi(32)]=[-1.16]\subset
[g^{16*}]=[1,\,1.13,\,-1.8]$.

\medskip Claim:  ${\rm ord}(g)\neq 3.16$.\\ Suppose that ${\rm
ord}(g)=3.16$. Then $[g^{16*}]=[1,\, 1.9,\, (\zeta_3:2).6]$. We
infer that
$$[g^{*}]=[1,\, \zeta_{16}:8,\, \pm 1,\, \eta_1,\ldots, \eta_{12}]$$
where $\eta_1,\ldots, \eta_{12}$ is a combination of $\zeta_3:2$,
$\zeta_6:2$, $\zeta_{12}:4$ and $\zeta_{24}:8$. In any case,
$$[g^{8*}]=[1,\, -1.8,\, 1,\, (\zeta_3:2).6],\,\,{\rm so}\,\,\Tr
(g^{8*}|H^*(X))<0,$$ contradicting Lemma \ref{nsym4}, as $g^{16}$ is symplectic.

\medskip Claim:  ${\rm ord}(g)\neq 3.15$.\\ Suppose that ${\rm
ord}(g)=3.15$. Then $[g^{15*}]=[1,\, 1.9,\, (\zeta_3:2).6].$ Since
$\phi(45)>12$, $\zeta_{45}\notin [g^*]$ and we infer that
$$[g^{*}]=[1,\, \zeta_{15}:8,\,1,\,(\zeta_9:6).2].$$
Then we compute $$[g^{3*}]=[1,\, (\zeta_{5}:4).2,\,1,\,(\zeta_3:2).6],\quad \Tr (g^{3*}|H^*(X))<0$$  contradicting Lemma \ref{nsym4}, as $g^{15}$ is symplectic.

\medskip Claim:  ${\rm ord}(g)\neq 3.30$.\\ Suppose that ${\rm
ord}(g)=3.30$. Then ${\rm ord}(g^2)=3.15$.

\medskip
Claim:  ${\rm ord}(g)\neq 3.24$.\\
Suppose that ${\rm ord}(g)=3.24$. Then $[g^{24*}]=[1,\, 1.8,
\,1,\, (\zeta_3:2).6]$, thus
$$[g^*]=[1,\, \zeta_{24}:8,\,\pm 1,\, \eta_1,\ldots, \eta_{12}]$$ where $[\eta_1,\ldots, \eta_{12}]$ is a combination of
 $\zeta_9:6$, $\zeta_{18}:6$ and
$\zeta_{36}:12$. The 4th power of $\zeta_9$, $\zeta_{18}$,
$\zeta_{36}$ is a 9th root of unity, so we infer that
$$[g^{4*}]=[1,\, (\zeta_6:2).4,\,1,\, (\zeta_9:6).2].$$ Thus
$$\Tr (g^{4*}|H^*(X))=8> \Tr (g^{24*}|H^*(X))=6,$$  contradicting Lemma \ref{nsym4}.

\medskip
Claim:  ${\rm ord}(g)\neq 2.24$.\\
Suppose that ${\rm ord}(g)=2.24$. Then $[g^{24*}]=[1,\,
1.8,\,1.5,\, -1.8]$. We infer that
$$[g^{*}]=[1,\, \zeta_{24}:8,\, \eta_1,\ldots, \eta_{5}, \,\zeta_{16}:8]$$
where $\eta_1,\ldots, \eta_{5}$ is a combination of 1, $-1$,
$\zeta_3:2$, $\zeta_4:2$, $\zeta_6:2$, $\zeta_8:4$ and
$\zeta_{12}:4$. In any case, $\sum_j\eta_j^8\le 5$ and $$e(g^8)=\Tr
(g^{8*}|H^*(X))=2+1-4+\sum\eta_j^8-8\le -4<0,$$  contradicting Lemma \ref{nsym4}, as $g^{24}$ is symplectic.

\medskip
Claim:  ${\rm ord}(g)\neq 3.20$.\\
Suppose that ${\rm ord}(g)=3.20$. Then $[g^{20*}]=[1,\, 1.8,\,1,\,
(\zeta_3;2).6]$, thus
$$[g^*]=[1,\, \zeta_{20}:8,\,\pm 1,\, \eta_1,\ldots, \eta_{12}]$$
where $\eta_1,\ldots, \eta_{12}$  is a combination of
$\zeta_{3}:2$, $\zeta_{6}:2$, $\zeta_{12}:4$, $\zeta_{15}:8$,
$\zeta_{30}:8$. We claim that $[\eta_1,\ldots,
\eta_{12}]=[(\zeta_{12}:4).3].$ Otherwise, $\sum_j\eta_j^{10}\le
2$ and
$$e(g^{10})=\Tr (g^{10*}|H^*(X))=2+1-8+1+\sum\eta_j^{10}\le -2<0,$$
 contradicting Lemma \ref{nsym4}, as $g^{20}$ is symplectic.
 This proves the claim. Thus
$[g^*]=[1,\, \zeta_{20}:8,\,\pm 1,\, (\zeta_{12}:4).3].$ Then
$$[g^{4*}]=[1,\, (\zeta_5:4).2, \, 1,\,(\zeta_3;2).6],\quad \Tr (g^{4*}|H^*(X))<0,$$  contradicting Lemma \ref{nsym4}.

\medskip
Claim:  ${\rm ord}(g)\neq 2.20$.\\
Suppose that ${\rm ord}(g)=2.20$. Then $[g^{20*}]=[1,\,
1.8,\,1.5,\, -1.8]$, thus
$$[g^{*}]=[1,\, \zeta_{20}:8,\, \eta_1,\ldots, \eta_{5}, \,(\zeta_{8}:4).2]$$
where $\eta_1,\ldots, \eta_{5}$ is a combination of 1, $-1$,
$\zeta_4:2$, $\zeta_5:4$ and $\zeta_{10}:4$. In any case,
$\sum_j\eta_j^{4}\le 5$ and $e(g^4)=\Tr (g^{4*}|H^*(X))=
1+\sum\eta_j^{4}-8<0$,  contradicting Lemma \ref{nsym4}, as $g^{20}$ is symplectic.

\medskip
Claim:  ${\rm ord}(g)\neq 2.30$.\\
Suppose that ${\rm ord}(g)=2.30$. Then $[g^{30*}]=[1,\,
1.8,\,1.5,\, -1.8]$. We infer that
$$[g^{*}]=[1,\, \zeta_{30}:8,\,\eta_1,\ldots, \eta_5,\, \tau_1,\ldots,\tau_8]$$
 where $[\eta_1,\ldots, \eta_5]$ is a combination of
$\pm 1$, $\pm\zeta_{3}:2$, $\pm\zeta_{5}:4$, and $[\tau_1,\ldots,\tau_8]$ is a combination of
$\zeta_{4}:2$, $\zeta_{12}:4$ and $\zeta_{20}:8$. In any case, we
see that
$$[g^{15*}]=[1,\, -1.8,\,\eta_1^{15},\ldots, \eta_5^{15},\, (\zeta_4:2).4].$$
Since $\eta_j^{15}=\pm 1$, we see that $\sum\eta_j^{15}\le 5$ and
$$0\le e(g^{15})=\Tr (g^{15*}|H^*(X))=-5+\sum\eta_j^{15}\le
0.$$ Thus $\eta_1^{15}=\ldots=\eta_5^{15}=1$. This occurs iff
$[\eta_1,\ldots, \eta_5]$ is a
combination of $1$, $\zeta_{3}:2$ and $\zeta_{5}:4$.
If $[\eta_1,\ldots, \eta_5]$ is a combination of $1$ and
$\zeta_{3}:2$, then using Lemma \ref{sum} we compute
$e(g^3)=\Tr (g^{3*}|H^*(X))=10> e(g^{30})=8.$ 
If $[\eta_1,\ldots, \eta_5]=[\zeta_{5}:4,\,1]$, then
$e(g^5)=\Tr (g^{5*}|H^*(X))=12> e(g^{30})=8.$ Both  contradict Lemma \ref{nsym4}, as $g^{30}$ is symplectic.
\end{proof}

\begin{lemma}\label{pn=6} Assume that $\phi(n)=6$.
\begin{enumerate}
\item If $n=18$, then $m=1$.
\item If $n=9$, then $m\le 2$.
\item If $n=14$, then $m=1$ or $3$.
\item If $n=7$, then $m\le 3$.
\end{enumerate}
\end{lemma}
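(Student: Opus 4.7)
The plan is to continue the uniform eigenvalue/trace strategy used in the earlier lemmas. Because $[1,\zeta_n:6]\subset [g^*]$ by Proposition \ref{integral}(4), taking $n$-th powers yields $[1,1.6]\subset [g^{n*}]$; comparing with Lemma \ref{Lefschetz} immediately forces $m\le 4$, so only finitely many possibilities remain to rule out in each of the four subcases.

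Two rapid Weyl-multiplicity arguments dispose of $(n,m)=(9,3)$ and $(18,3)$. In both, $[g^{n*}]=[1,1.9,(\zeta_3:2).6]$ forces $12$ eigenvalues of $[g^*]$ to have $n$-th power a primitive cube root; hence those eigenvalues have order in $\{27\}$ (respectively $\{27,54\}$). But $\phi(27)=\phi(54)=18>12$, so by Lemma \ref{Weyl} no such eigenvalue can appear, a contradiction.

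For the other disallowed $m$-values I would compute $\Tr(g^{i*}|H^2)$ for a carefully chosen power $i$ and invoke Lemma \ref{nsym4} applied to $g$ or to $g^2$ to derive $e(g^i)<0$. For $(18,2)$, parametrizing the seven eigenvalues of order dividing $18$ beyond the forced $1$ and $\zeta_{18}:6$ yields $\Tr(g^{6*}|H^2)=-3-9a_9\le -3$. For $(18,4)$, the $9$-th powers of the contributions of order $\{4,12,36\}$ and $\{8,24,72\}$ pair into conjugate sums equal to zero, leaving $\Tr(g^{9*}|H^2)\in\{-6,-4\}$. For $(9,4)$, the constraint $e(g^3)\ge 0$ first forces the six eigenvalues with $9$-th power $-1$ to be exactly $\zeta_{18}:6$, after which $\Tr(g^{6*}|H^2)=-12$ contradicts $e(g^6)\ge 0$ (via Lemma \ref{nsym4} applied to $g^2$, whose ninth power $g^{18}$ is symplectic of order $2$). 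For $(7,4)$, a direct computation gives $\Tr(g^{2*}|H^2)=-8$, again in contradiction.

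The delicate case, which I expect to be the main obstacle, is $(n,m)=(14,2)$. Chaining the Weyl constraint (which forbids $\zeta_{28}$ since $\phi(28)=12>8$ and thereby forces $(\zeta_4:2).4$) with the inequalities $e(g^7)\ge 0$, $e(g^2)\ge 0$, and $e(g)\le e(g^{14})=8$ pins the eigenvalue list down to the unique candidate $[g^*|H^2]=[1.6,(-1).2,\zeta_{14}:6,(\zeta_4:2).4]$, which gives $e(g)=7$. Trace arithmetic alone does not close this case; here I would argue geometrically. Since ${\rm Fix}(g)\subset{\rm Fix}(g^{14})$, a set of exactly $8$ points by Proposition \ref{sym}, ${\rm Fix}(g)$ is finite and $|{\rm Fix}(g)|=e(g)=7$. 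But the restriction of $g$ to the $8$-point set is a bijection, and no permutation of a finite set fixes exactly all-but-one of its elements — the lone mobile point would have no consistent image. Hence $|{\rm Fix}(g)|\neq 7$, the desired contradiction that rules out the last case.
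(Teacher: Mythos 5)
Your proposal follows the paper's strategy almost step for step: the bound $m\le 4$ from $[1,\,1.6]\subset[g^{n*}]$, the cyclotomic-degree argument via $\zeta_{27}$ and $\zeta_{54}$ to kill $m=3$ for $n=9,18$, and trace computations combined with Lemma \ref{nsym4} for the remaining cases. Your numbers check out where I verified them: $\Tr(g^{6*}|H^2)\in\{-3,-12\}$ for $2.18$, the forcing of $[\zeta_{18}:6]$ and then $\Tr(g^{6*}|H^2)=-12$ for $4.9$, and the vanishing of the order-$\{4,12,36\}$ and $\{8,24,72\}$ contributions under $9$-th powers for $4.18$ (a case the paper handles implicitly by passing to $g^2$ of order $4.9$). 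The one place you genuinely diverge is the endgame of $2.14$: the paper derives $e(g^2)=1$, reads off the orbit structure of $g$ on the $8$ points of ${\rm Fix}(g^{14})$ (one fixed point plus a $7$-cycle), concludes $e(g^7)=8$, and contradicts the trace bound $\Tr(g^{7*}|H^*(X))\le 4$; you instead use $e(g^7)\ge 0$ together with $e(g)\le e(g^{14})=8$ to pin the list (writing $a$ for the number of $+1$'s among the seven eigenvalues of order dividing $14$, one gets $e(g^7)=2a-10$ and $e(g)=2a-3$, forcing $a=5$ and $e(g)=7$), and then observe that no permutation of an $8$-point set fixes exactly $7$ points. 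That finish is correct and arguably cleaner than the paper's.

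Two small gaps. First, in the case $4.7$ the six eigenvalues of $[g^*]$ whose $7$-th power is $-1$ form either $[\zeta_{14}:6]$ or $[-1.6]$; your asserted value $\Tr(g^{2*}|H^2)=-8$ holds only in the first subcase, while in the second $\Tr(g^{2*}|H^2)=-1$ and you must instead compute $\Tr(g^{*}|H^2)=-5$, i.e.\ $e(g)=-3<0$ (this is exactly the paper's split). Second, your case list omits $4.14$, which is needed to prove that $n=14$ forces $m\in\{1,3\}$; it is disposed of in one line, since ${\rm ord}(g)=4.14$ gives ${\rm ord}(g^2)=4.7$, but as written it is unaddressed --- and since you chose to treat $4.18$ directly rather than reduce it to $4.9$, the reduction trick is not implicitly in place in your write-up. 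Both repairs are routine.
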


\begin{proof}
We see that $g^{n}$ is symplectic of order $m$ with
$[1,\,1.6]\subset[g^{n*}]$, thus $m\le 4$ by Lemma
\ref{Lefschetz}.

Assume that $m=3$. Then $[g^{n*}]=[1,\,1.9,\,(\zeta_3:2).6]$. If
$n=9$ or 18, then $\zeta_{27}$ or $\zeta_{54}\in [g^*]$, but
$\phi(27)=\phi(54)=18>12$.

It remains to show that $2.18$, $4.9$, $4.7$ and $2.14$ do not
occur.

\medskip Claim: ${\rm ord}(g)\neq 2.18$.\\ Suppose
that ${\rm ord}(g)=2.18$.  Then $[g^{18*}]=[1,\, 1.6,\, 1.7,\,
-1.8]$, thus
$$[g^{*}]=[1,\, \zeta_{18}:6,\,\eta_1,\ldots,\eta_7,\,\tau_1,\,\ldots,\tau_8]$$
where $[\eta_1,\ldots,\eta_7]$ is a combination of $\pm 1$,
$\pm\zeta_3:2$, $\pm\zeta_9:6$ and
$[\tau_1,\ldots,\tau_8]=[(\zeta_{4}:2).4]$,
$[(\zeta_{4}:2).2,\,\zeta_{12}:4]$ or $[(\zeta_{12}:4).2]$. Then
$$[g^{6*}]=[1,\, (\zeta_{3}:2).3,\, \eta_1^6,\ldots,\eta_7^6,\, -1.8]$$
where $[\eta_1^6,\ldots,\eta_7^6]=[1.7]$ or $[1,\,
(\zeta_{3}:2).3]$. Thus, $\Tr (g^{6*}|H^*(X))<0$,  contradicting Lemma \ref{nsym4}, as $g^{18}$ is symplectic.

\medskip Claim: ${\rm ord}(g)\neq 4.9$.\\ Suppose
that ${\rm ord}(g)=4.9$.  Then $[g^{9*}]=[1,\, 1.6,\, 1,\,
(\zeta_4:2).4,\, -1.6]$, thus
$$[g^{*}]=[1,\, \zeta_{9}:6,\,1,\,\eta_1,\ldots,\eta_8,\,\tau_1,\,\ldots,\tau_6]$$
where $[\eta_1,\ldots,\eta_8]=[(\zeta_{4}:2).4]$,
$[(\zeta_{4}:2).2,\,\zeta_{12}:4]$ or $[(\zeta_{12}:4).2]$, and
$[\tau_1,\ldots,\tau_6]$ consists of $-1$, $\zeta_6:2$,
$\zeta_{18}:6$. Since $g^9$ is symplectic, by Lemma \ref{nsym4}
$\Tr (g^{3*}|H^*(X))\ge 0$. This determines
$[\tau_1,\ldots,\tau_6]$ uniquely,
$[\tau_1,\ldots,\tau_6]=[\zeta_{18}:6]$. Then
$$[g^{6*}]=[1,\, (\zeta_3:2).3,\,1,\, -1.8,\, (\zeta_3:2).3],\quad \Tr (g^{6*}|H^*(X))<0,$$  contradicting Lemma \ref{nsym4}, as $g^{18}$ is symplectic.

\medskip Claim:  ${\rm ord}(g)\neq 4.7$.\\ Suppose that ${\rm
ord}(g)=4.7$. Then $[g^{7*}]=[1,\, 1.6,\, 1,\, (\zeta_4:2).4,\,
-1.6]$, thus
$$[g^{*}]=[1,\, \zeta_{7}:6,\,1,\,(\zeta_4:2).4,\,\tau_1,\,\ldots,\tau_6]$$
where $[\tau_1,\ldots,\tau_6]=[\zeta_{14}:6]$ or $[-1.6]$. In
the second case, $\Tr (g^{*}|H^*(X))<0$, and in the first, $[g^{2*}]=[1,\, \zeta_{7}:6,\, 1,\,
-1.8,\,\zeta_7:6]$ and $\Tr (g^{2*}|H^*(X))<0$,  contradicting Lemma \ref{nsym4}, as $g^{14}$ is symplectic.

\medskip Claim:  ${\rm ord}(g)\neq 2.14$.\\ Suppose that ${\rm
ord}(g)=2.14$. Then $[g^{14*}]=[1,\, 1.6,\, 1.7,\, -1.8]$, thus
$$[g^{*}]=[1,\, \zeta_{14}:6,\,\eta_1,\ldots,\eta_7,\,(\zeta_4:2).4]$$
where $[\eta_1,\ldots,\eta_7]=[\zeta_{14}:6, \,\pm 1]$ or
$[\zeta_{7}:6, \,\pm 1]$ or $[\pm 1,\,\ldots, \pm 1]$. Since $g^{14}$ is symplectic, $\Tr (g^{2*}|H^*(X))\ge 0$ by Lemma \ref{nsym4}.
This is possible only if
$[\eta_1,\ldots,\eta_7]=[\pm 1,\,\ldots, \pm 1]$. Then
$\Tr (g^{2*}|H^*(X))=1$ and
${\rm Fix}(g^2)$ consists of a point. Thus the action of $g$ on
the 8-point set ${\rm Fix}(g^{14})$  fixes one  point and rotates
 7 points, hence $g^7$ fixes all the 8 points of ${\rm
Fix}(g^{14})$ and $$e({\rm
Fix}(g^{7}))=8.$$ On the other hand, $[g^{7*}]=[1,\, -1.6,\,
\eta_1^7,\ldots,\eta_7^7,\,(\zeta_4:2).4],$ thus $$\Tr (g^{7*}|H^*(X))= -3+\sum\eta_j^7\le 4.$$
\end{proof}

\begin{lemma}\label{pn=4} Assume that $\phi(n)=4$.
\begin{enumerate}
\item If $n=12$, then $m\le 3$ or $m=5$.
\item If $n=10$, then $m\le 3$.
\item If $n=8$, then $m\le 3$ or $m=5$.
\item If $n=5$, then $m\le 4$.
\end{enumerate}
\end{lemma}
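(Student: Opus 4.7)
The plan is to follow the same template as Lemmas \ref{pn=6} and \ref{pn=8}. Since $[1,\zeta_n:4]\subset[g^*]$, applying the $n$th power gives $[1,1.4]\subset[g^{n*}]$, which by Lemma \ref{Lefschetz} is \emph{a priori} compatible with every symplectic order $1\le m\le 8$. Each part of the lemma therefore amounts to excluding a finite list of large-$m$ pairs: $m=4,6,7,8$ for $n=12$; $m=4,5,6,7,8$ for $n=10$; $m=4,6,7,8$ for $n=8$; and $m=5,6,7,8$ for $n=5$.

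I would first dispose of the cyclotomic ``pigeonhole'' cases, where some prime-power divisor of $N:=mn$ has totient too large to coexist with the already-forced eigenvalues $[1,\zeta_n:4]\subset[g^*]$. For instance, if $(n,m)=(5,5)$, then ${\rm ord}(g)=25$; by Lemma \ref{Weyl} the eigenvalue $\zeta_{25}$ would have to appear with multiplicity $\phi(25)=20$, but together with the $5$ slots already occupied by $1$ and $\zeta_5:4$ this exceeds $b_2=22$. Hence every eigenvalue of $g^*|H^2_{\et}(X,\bbQ_l)$ is a $5$th root of unity, so $g^{5*}|H^2_{\et}(X,\bbQ_l)$ is the identity, and Theorem \ref{faith} forces $g^5=1$, contradicting ${\rm ord}(g)=25$. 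The analogous pigeonhole, after a short check on the lower-order root contributions, handles $(n,m)=(8,4)$ via $\phi(32)=16$, and a handful of further subcases.

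For the remaining pairs I would reconstruct, from $[g^{n*}]$ as listed in Lemma \ref{Lefschetz}, all candidate lists $[g^*]$ by analysing the $n$th-power map on $N$th roots of unity, subject to Lemma \ref{Weyl} and $[1,\zeta_n:4]\subset[g^*]$. For each such candidate I then compute $\Tr(g^{i*}|H^*(X))$ via Lemma \ref{sum} for a well-chosen divisor $i$ of $N$, and derive a contradiction either from $\Tr(g^{i*}|H^*(X))<0$, which by ${\rm Fix}(g^i)\subset{\rm Fix}(g^{n'})$ (with $n'={\rm lcm}(n,i)$, so that $g^{n'}$ is symplectic) contradicts Lemma \ref{nsym4}, or from $\Tr(g^{i*}|H^*(X))>\Tr(g^{n*}|H^*(X))$ for $i\mid n$, again violating Lemma \ref{nsym4}. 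A natural first choice is $i=N/p$ for a prime $p\mid m$, making $g^i$ symplectic of prime order $p$ with a completely explicit eigenvalue list. For $(n,m)=(5,6)$, for instance, the candidate lists $[g^*]$ reduce to four Weyl-compatible shapes, and in each of them one of $g^{2*}$ or $g^{3*}$ is seen to have negative trace.

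The principal obstacle is the sheer bookkeeping: about sixteen $(n,m)$ subcases, each possibly producing several Weyl-compatible eigenvalue distributions, particularly when $N=mn$ has many divisors (e.g.\ $(12,8)$ with $N=96$, or $(10,6)$ with $N=60$). In those cases one may have to test several divisors $i$ before hitting a negative or excessive trace. Where $\gcd(m,n)=1$, it will also be useful to exploit the coprime factorisation $g=g^ag^b$ with $g^a$ purely non-symplectic of order $n$ and $g^b$ symplectic of order $m$, which often reduces the analysis to pairs already ruled out in Lemmas \ref{pn=8} and \ref{pn=6}.
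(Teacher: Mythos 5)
Your overall strategy --- reconstructing candidate eigenvalue lists $[g^*]$ from $[g^{n*}]$ via Lemma \ref{Weyl}, then deriving contradictions from $\Tr(g^{i*}|H^*(X))<0$ or $\Tr(g^{i*}|H^*(X))>\Tr(g^{n*}|H^*(X))$ via Lemma \ref{nsym4} --- is exactly the paper's, and your two worked subcases ($(n,m)=(5,5)$ by the $\phi(25)=20$ pigeonhole, and $(n,m)=(5,6)$ by negative traces of $g^{2*}$ or $g^{3*}$ in each of the four Weyl-compatible lists) agree with the paper's computations. Two differences are worth recording. First, your opening claim that $[1,\,1.4]\subset[g^{n*}]$ is a priori compatible with every symplectic order $m\le 8$ is a miscount: for $m=7$ and $m=8$ the lists in Lemma \ref{Lefschetz} contain the eigenvalue $1$ with multiplicity only $4<5$, so Lemma \ref{Lefschetz} already forces $m\le 6$; this is the paper's first step and it deletes all the $(n,7)$ and $(n,8)$ pairs from your case list at no cost. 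Your plan would still catch these during the reconstruction phase (no admissible $[g^*]$ exists), so the slip is self-correcting, but it inflates exactly the bookkeeping you identify as the principal obstacle. Second, and more substantively, the paper disposes of the pairs $(n,m)=(12,6),(12,4),(8,6),(8,4)$ not by analysing orders $72$, $48$, $48$, $32$ directly but by passing to $g^2$ or $g^3$, whose order is $6.6$, $4.4$ or $6.4$, and invoking Lemma \ref{pn=2}; this is the labour-saving reduction you are missing, and your proposed substitute (coprime factorisation and appeal to Lemmas \ref{pn=8} and \ref{pn=6}) points at the wrong target --- the relevant previously established result is Lemma \ref{pn=2}. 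That said, your brute-force route does terminate for these four pairs, since the Weyl packet count alone shows no admissible $[g^*]$ exists (for instance, for $(12,4)$ the six eigenvalues $-1$ of $[g^{12*}]$ would have to arise from packets of size $\phi(8)=4$ or $\phi(24)=8$, and $6$ is not expressible as $4a+8b$); it is simply much heavier than the paper's two-line reductions.
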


\begin{proof} We see that $g^{n}$
is symplectic of order $m$ with $[1,\,1.4]\subset[g^{n*}]$, thus
$m\le 6$ by Lemma \ref{Lefschetz}.

Assume that $m=5$. Then $[g^{n*}]=[1,\,1.5,\,(\zeta_5:4).4]$. If
$n=5$ or 10, then $\zeta_{25}$ or $\zeta_{50}\in [g^*]$, but
$\phi(25)=\phi(50)=20>16$.

\medskip Assume that ${\rm ord}(g)=6.12$. Then ${\rm
ord}(g^2)=6.6$, but such an order does not occur by Lemma
\ref{pn=2}.

\medskip Assume that ${\rm ord}(g)=4.12$. Then ${\rm
ord}(g^3)=4.4$, but such an order does not occur by Lemma
\ref{pn=2}.

\medskip Assume that ${\rm ord}(g)=6.5$. Then\\ $[g^{5*}]=[1,\,
1.4,\, 1,\, (\zeta_3:2).4,\,(\zeta_6:2).2,\,-1.4]$, thus
$$[g^{*}]=[1,\,\zeta_{5}:4,\, 1,\, \eta_1,\,\ldots,\,\eta_8,\, (\zeta_6:2).2,\,\tau_1,\ldots,\tau_4]$$
where  $[\eta_1,\,\ldots,\,\eta_8]=[(\zeta_3:2).4]$ or
$[\zeta_{15}:8]$, and $[\tau_1,\ldots,\tau_4]=[\zeta_{10}:4]$ or
$[-1.4]$. By Lemma \ref{nsym4}, $0\le e(g)= \Tr (g^{*}|H^*(X))\le e(g^{5})=2$. Thus
$$[g^{*}]=[1,\,\zeta_{5}:4,\, 1,\,(\zeta_3:2).4,\,
(\zeta_6:2).2,\,\zeta_{10}:4]\,\,{\rm or}$$ $$[1,\,\zeta_{5}:4,\,
1,\,\zeta_{15}:8,\, (\zeta_6:2).2,\,-1.4].$$ In the first case,
$\Tr (g^{2*}|H^*(X))<0$, and in the second, $\Tr (g^{3*}|H^*(X))<0$,   contradicting Lemma \ref{nsym4}, as both $g^{10}$ and $g^{15}$ are symplectic.

\medskip Assume that ${\rm ord}(g)=4.10$. Then $[g^{10*}]=[1,\,
1.7,\, (\zeta_4:2).4,\,-1.6]$, so
$$[g^{*}]=[1,\,\zeta_{10}:4,\, \pm 1,\, \pm 1,\, \pm 1,\, (\zeta_8:4).2,\,(\zeta_4:2).3].$$
Thus $\Tr (g^{2*}|H^*(X))<0$, contradicting Lemma \ref{nsym4}.

\medskip Assume that ${\rm ord}(g)=6.8$ or $4.8$. Then ${\rm
ord}(g^2)=6.4$ or $4.4$, but such orders do not occur by Lemma
\ref{pn=2}.
\end{proof}

\begin{lemma}\label{pn=2} Assume that $\phi(n)\le 2$.
\begin{enumerate}
\item If $n=6$, then $m\neq 6, 8$.
\item If $n=4$, then $m\neq 4, 6, 8$.
\item If $n=3$, then $m\neq 6$.
\item If $n=2$, then $m\neq 8$.
\end{enumerate}
\end{lemma}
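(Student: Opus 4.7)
The plan is to rule out each of the seven pairs $(m,n)$ in the statement by a single divisibility obstruction on the eigenvalue multiplicities of $g^*$ on $H^2_{\rm et}(X,\bbQ_l)$. Set $N=mn$; since $g$ has order $N$, every eigenvalue of $g^*$ is an $N$-th root of unity, and by Lemma \ref{Weyl} there is for each divisor $b\mid N$ a nonnegative integer $m_b$ equal to the common multiplicity of every primitive $b$-th root of unity in $[g^*]$. The eigenvalues of $g^{n*}$ are furnished by Lemma \ref{Lefschetz} applied to the symplectic automorphism $g^n$ of order $m$.

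The key identity I will use is the following. Raising a primitive $b$-th root of unity to the $n$-th power yields a primitive $(b/\gcd(b,n))$-th root, and this map is Galois-equivariant and surjective onto the primitive $(b/\gcd(b,n))$-th roots with each fiber of size $\phi(b)/\phi(b/\gcd(b,n))$. Hence for any primitive $a$-th root $\alpha$ appearing in $[g^{n*}]$ with multiplicity $M_a$,
\[
M_a \;=\; \sum_{\substack{b\mid N\\ b/\gcd(b,n)=a}} \frac{\phi(b)}{\phi(a)}\,m_b.
\]
For each excluded pair, I will exhibit one specific $a$ for which this linear equation has no nonnegative integer solution, so that the pair cannot occur.

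Concretely, for $(m,n)=(6,6)$ with $N=36$, a primitive $3$rd root has multiplicity $4$ in $[g^{6*}]$; the divisors $b\mid 36$ with $b/\gcd(b,6)=3$ are $b=9,18$, both contributing the ratio $3$, giving $3(m_9+m_{18})=4$, impossible. The same one-line divisibility argument handles every remaining case: for $(4,4)$ with $N=16$, the multiplicity of $-1$ in $[g^{4*}]$ is $6$ and must equal $4m_8$; for $(6,4)$ with $N=24$, the multiplicity of $\zeta_6$ in $[g^{4*}]$ is $2$ and must equal $4m_{24}$; for $(8,4)$ with $N=32$, the multiplicity of $\zeta_8$ in $[g^{4*}]$ is $2$ and must equal $4m_{32}$; for $(6,3)$ with $N=18$, the multiplicity of $\zeta_6$ in $[g^{3*}]$ is $2$ and must equal $3m_{18}$; for $(8,2)$ with $N=16$, the multiplicity of $\zeta_4$ in $[g^{2*}]$ is $3$ and must equal $2m_8$; finally, for $(8,6)$ with $N=48$, the multiplicity of $\zeta_4$ in $[g^{6*}]$ is $3$ and must equal $2m_8+4m_{24}$, whose left side is even. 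Each equation is inconsistent over $\bbZ_{\ge 0}$.

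The main obstacle is simply the bookkeeping: enumerating the divisors $b\mid N$ with $b/\gcd(b,n)=a$ and computing the ratios $\phi(b)/\phi(a)$ in each case. This is mechanical once the key identity is in place, and, in contrast with the earlier lemmas \ref{pn=10}--\ref{pn=4}, neither the trace inequalities of Lemma \ref{nsym4} nor the Lefschetz fixed point formula come into play.
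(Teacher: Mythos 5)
Your proof is correct, and it is essentially the paper's argument: both rest on Lemma \ref{Weyl} (integrality of the characteristic polynomial forcing equal multiplicities $m_b$ on Galois orbits) plus a count of which primitive roots of unity in $[g^*]$ can power down to a prescribed primitive root in $[g^{n*}]$, yielding an unsatisfiable divisibility constraint in each case. The only cosmetic differences are that you package the count as a uniform identity and treat $(m,n)=(8,4)$ and $(8,6)$ directly, whereas the paper disposes of these two by passing to $g^2$ and $g^3$ and reducing to the case $(8,2)$; your verifications of the individual equations all check out.
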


\begin{proof}

\medskip Assume that $\phi(n)= 2$ and $m=6$. Then $$[g^{n*}]=[1,\,
1.4,\, 1,\, (\zeta_3:2).4,\,(\zeta_6:2).2,\,-1.4].$$ We infer that
$\zeta_6\in[g^{n*}]$ must come from $\zeta_{6n}$ in $[g^*]$. But
$\phi(6n)=12, 8, 6$ respectively if $n=6, 4,3$. In any case,
$\phi(6n)>4$.

\medskip Assume that ${\rm ord}(g)=8.2$. Then $[g^{2*}]=[1,\,
1.3,\, (\zeta_8:4).2,\, (\zeta_4:2).3,\,-1.4].$ We infer that
$(\zeta_4:2).3$ must come from $\zeta_{8}:4$ in $[g^*]$, a
contradiction. This also shows that $8.4$ and $8.6$ do not occur.

\medskip Assume that ${\rm ord}(g)=4.4$. Then $[g^{4*}]=[1,\,
1.4,\, 1.3,\, (\zeta_4:2).4,\,-1.6].$ We infer that $-1.6$ must
come from $\zeta_{8}:4$ in $[g^*]$, a contradiction.
\end{proof}


\section{The Complex Case}

Throughout this section, $X$ is a complex K3 surface.

Assume that $X$ is not projective. Then  all automorphisms of
finite order are symplectic, hence of order $\le 8$, (see Section
3.)

Thus, we may assume that $X$ is projective. Then  the image of
${\rm Aut}(X)$ on $\GL(H^0(X, \Omega^2_X))\cong\bbC^*$ is a finite
cyclic group \cite{Nik}, and every automorphism of finite order
has an invariant ample divisor class, hence its induced action on
the 2nd integral cohomology $H^2(X,\bbZ)$ has 1 as an eigenvalue.
The proof of the only-if-part of Theorem \ref{main-complex} is
just a copy of the tame case, once we replace $H^2_{\rm
et}(X,\bbQ_l)$ by $H^2(X,\bbZ)$. Here, we also replace Proposition
\ref{trace} by the usual Lefschetz fixed point formula from
topology. Proposition \ref{C-exist} gives the if-part of Theorem
\ref{main-complex}.

We remark that in the complex case the holomorphic Lefschetz
formula improves Lemma \ref{nsym4} to a finer form: $X^h$ is
empty. But we do not need this.

\section{Faithfulness of the representation on the cohomology}

In this section we prove Theorem \ref{faith}.

\begin{proof}
Let $g$ be an automorphism of $X$ such that $g^*$ acts on
$H^2_{\rm et}(X,{\bbQ}_l)$ trivially. By Proposition
\ref{integral}, $g^*$ acts trivially on  $H^0(X,\Omega_X^2)$,
hence $g$ is symplectic. From the exact sequence of
$\bbQ_l$-vector spaces
\begin{equation}
0\to \NS(X)\otimes \bbQ_l \to  H_{\rm et}^2(X,\bbQ_l) \to
T_l^2(X)\to 0,
\end{equation}
we see that $g^*$ acts trivially on the N\'eron-Severi group
$\NS(X)$. As was pointed out by Rudakov and Shafarevich
(\cite{RS}, Sec. 8, Prop. 3), we may assume that $g$ is of finite
order. Indeed, automorphisms acting trivially on ${\rm Pic}(X)$
form an algebraic group and there is no non-zero regular vector
field on a K3 surface (\cite{RS}, Sec. 6, Theorem).

\medskip Case 1: ${\rm ord}(g)$ is coprime to the characteristic
$p$, i.e. $g$ is tame. By Proposition \ref{sym}, the
representation of a tame symplectic automorphism on  $H^2_{\rm
et}(X,{\bbQ}_l)$ is faithful. 

\medskip Case 2: ${\rm ord}(g)=p^rm$ for some $m$ coprime to $p$.
Since $g$  acts trivially on $H^2_{\rm et}(X,{\bbQ}_l)$, so does
$g^{p^r}$, which has order $m$. Then by Case 1, $g^{p^r}=1$.

\medskip Case 3: ${\rm ord}(g)=p^r$. Since $g$  acts trivially on
$H^2_{\rm et}(X,{\bbQ}_l)$, so is $g^{p^{r-1}}$. Thus we may
assume that $${\rm ord}(g)=p.$$ By Theorem 2.1 \cite{DK2}, an
automorphism of order the characteristic  $p$ exists only if $p\le
11.$ By the result of \cite{DK1} we know that the quotient surface
$X/\langle g\rangle$ is a rational surface or an Enriques surface
(non-classical of $\mu_2$-type) or a K3 surface with rational
double points.

\medskip
Assume that $X/\langle g\rangle$ is a rational surface. This case
occurs when the fixed locus $X^g$ is either a point giving a
Gorenstein elliptic singularity on the quotient surface or of
1-dimensional. In the latter case the quotient surface has
rational singularities. Since  $X/\langle g\rangle$ is rational,
by Proposition \ref{diminv}
$$\dim H^2_{\rm et}(X,{\bbQ}_l)^g=\rank~\NS(X)^g.$$
Since $g^*$ acts trivially on  $H^2_{\rm et}(X,{\bbQ}_l)$, it acts
trivially on
 $\NS(X)$. Thus
$$\rank~\NS(X)=\rank~\NS(X)^g=\dim H^2_{\rm et}(X,{\bbQ}_l)^g=22,$$
i.e. $X$ is supersingular. For supersingular K3 surfaces Rudakov and Shafarevich
(\cite{RS}, Sec. 8, Prop. 3) proved the faithfulness of the automorphism group on the N\'eron-Severi group.

\medskip
Assume that $X/\langle g\rangle$ is an Enriques surface. This case
occurs when $p=2$ and $X^g$ is empty. By Proposition \ref{diminv},
we have
$$\dim H^2_{\rm et}(X,{\bbQ}_l)^g=\rank~\NS(X)^g=10,$$
thus $g$ cannot act trivially on $\dim H^2_{\rm et}(X,{\bbQ}_l)$. 

\medskip
Assume that $X/\langle g\rangle$ is a K3 surface with rational
double points. This case occurs when the fixed locus $X^g$
consists of either two points or a point which gives a rational
double point on the quotient.
  Let $Y\to X/\langle g\rangle$
be a minimal resolution. Then $Y$ is a K3 surface and
$$\rank~\NS(Y)>\rank~\NS(X/\langle g\rangle)=\rank~\NS(X)^g.$$
On the other hand, by Proposition 5 of \cite{Shioda2}
$$T_l^2(Y)\cong T_l^2(X)^g.$$ Combining these two, we get
$$\dim H^2_{\rm et}(Y,{\bbQ}_l)=\rank~\NS(Y)+ \dim T_l^2(Y)>\rank~\NS(X)^g + \dim T_l^2(X)^g.$$
The right hand side is 22, if $g^*|H^2_{\rm
et}(X,{\bbQ}_l)$ is trivial.
\end{proof}

\section{The case: $p=11$}

In this section we assume that $p=11$ and determine the orders of
wild automorphisms. We first recall some previous results from
\cite{DK1}, \cite{DK2}, \cite{DK3}.

\begin{proposition}\label{11} \cite{DK1}, \cite{DK2}, \cite{DK3}  Let $u$ be an automorphism of order $11$ of a K3 surface $X$ over an algebraically closed
field of characteristic $p=11$. Then
\begin{enumerate}
\item the fixed locus $X^u$ is a cuspidal curve or a point;
\item $X$ admits an $u$-invariant fibration $X\to {\bf P}^1$ of curves of genus $1$, with or without a section, and $u$ acts on the base ${\bf P}^1$ with one
fixed point;
\item the corresponding fibre $F_0$ is of type $II$ and the fixed locus $X^u$ is either equal to $F_0$ or the cusp of
$F_0$;
\item the action of $u^*$ on $H^2_{\rm
  et}(X,\bbQ_l)$, $l\ne 11$, has eigenvalues $$[u^*]=[1,\,1,\, (\zeta_{11}:10).2]$$
  where the first eigenvalue corresponds to a $u$-invariant ample class.
  \end{enumerate}
\end{proposition}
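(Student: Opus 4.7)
The plan is to combine the classification of wild $p$-cyclic actions on K3 surfaces in characteristic $p$ from \cite{DK1}, \cite{DK2}, \cite{DK3} with the cohomological tools assembled in Sections 2 and 6 of the present paper, namely integrality, the Weyl-type lemma, and the faithfulness theorem.

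For parts (1)--(3), the strategy is geometric. The classification of $p$-cyclic quotients of K3 surfaces shows that $X/\langle u\rangle$ is rational, Enriques of $\mu_2$-type, or K3 with rational double points; the Enriques case is excluded since it only arises for $p=2$, and an analysis of the quotient singularities (together with the structure of $X^u$) restricts the fixed locus to either a single cuspidal point or a cuspidal rational curve, giving (1). A $u$-invariant genus-one fibration $\pi:X\to\bbP^1$ is produced from a $u$-invariant class of self-intersection zero in $\NS(X)^u$; the induced action on the base $\bbP^1$ has order $11$, so by Lemma~\ref{P1} it fixes exactly one point $t_0$, giving (2). The fibre $F_0=\pi^{-1}(t_0)$ is $u$-invariant, and a Kodaira-type analysis shows that the only possibility compatible with the fixed-locus structure from (1) is that $F_0$ is of type $II$, with $X^u$ equal either to $F_0$ or to its cusp, giving (3).

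For part (4), the argument is purely linear-algebraic. Since $u$ has prime order $11$, every eigenvalue of $u^*|H^2_{\rm et}(X,\bbQ_l)$ is an $11$-th root of unity. Proposition~\ref{integral} provides integrality of the characteristic polynomial, and Lemma~\ref{Weyl} forces primitive $11$-th roots to appear in complete Galois orbits of size $\phi(11)=10$. Adding the invariant ample class, we get
$$[u^*]=[1.a,\,(\zeta_{11}:10).b],\qquad a+10b=22,\qquad a\ge 1,$$
so $(a,b)\in\{(22,0),\,(12,1),\,(2,2)\}$. Theorem~\ref{faith} eliminates $(22,0)$ since $u\neq 1$. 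The remaining case $(12,1)$ is ruled out by the étale Lefschetz identity
$$e(X^u)=\Tr(u^*|H^*(X))=2+a-b,$$
combined with $e(X^u)\in\{1,2\}$ from (3): this forces $a-b\in\{-1,0\}$, and only $(a,b)=(2,2)$ is compatible with $a+10b=22$ (the alternative gives $11b=23$, a non-integer).

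The main obstacle is justifying the Lefschetz identity for a pure wild order-$p$ automorphism, since Proposition~\ref{DL} (Deligne--Lusztig) degenerates to a tautology when $s=1$ and $u=g$. One way around is to appeal to the general étale Lefschetz-Verdier formula for finite-order automorphisms of smooth projective varieties in positive characteristic. An alternative, purely within the tools of the present paper, is to apply Proposition~\ref{diminv} to bound $\dim H^2_{\rm et}(X,\bbQ_l)^u=\rank\NS(X)^u$ directly, by enumerating the $u$-invariant classes furnished by the elliptic fibration of (2)--(3) (the fibre class $F$, any $u$-invariant section, and invariant combinations of components of reducible fibres lying in orbits of size $11$); the resulting bound collapses the admissible invariant dimension to $2$, forcing $(a,b)=(2,2)$.
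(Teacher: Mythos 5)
For parts (1)--(3) you do essentially what the paper does, namely quote the structure results of \cite{DK1}, \cite{DK2}, \cite{DK3}; that part is fine. The setup for (4) -- eigenvalues are $11$-th roots of unity, integrality plus Lemma~\ref{Weyl} give $[u^*]=[1.a,(\zeta_{11}:10).b]$ with $a+10b=22$, and Theorem~\ref{faith} kills $(a,b)=(22,0)$ -- also matches the paper's framework. The divergence, and the gap, is in how you eliminate $(a,b)=(12,1)$. Your main argument uses the identity $e(X^u)=\Tr(u^*|H^*(X))$, but for a wild automorphism of order $p$ in characteristic $p$ this identity is simply false, and it is false in the very situation at hand: when $X^u$ is a single point one has $e(X^u)=1$ while the correct answer $(a,b)=(2,2)$ gives $\Tr(u^*|H^*(X))=2$. (Compare the translation $t\mapsto t+1$ on ${\bf P}^1$ in characteristic $p$: one fixed point, trace $2$.) This is exactly why the paper only states the Lefschetz formula for tame automorphisms (Proposition~\ref{trace}) and otherwise uses Deligne--Lusztig, which, as you correctly observe, degenerates to a tautology when $s=1$. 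Your first proposed repair -- the general Lefschetz--Verdier formula -- does not rescue the identity, because at wild fixed points the local terms are not $1$ (they involve wild ramification contributions), so the global trace is not $e(X^u)$.

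Your second repair (use Proposition~\ref{diminv} to get $\dim H^2_{\rm et}(X,\bbQ_l)^u=\rank\,\NS(X)^u=\rank\,\NS(X/\langle u\rangle)$ and then show this rank is $2$) is a legitimate direction, but as written it is only a sketch: to conclude $\rank\,\NS(X/\langle u\rangle)=2$ you must control the reducible fibres of the induced genus-one fibration on the quotient and the Mordell--Weil contribution, which is precisely the nontrivial content of the analysis in \cite{DK1} and \cite{DK3}. The paper sidesteps all of this by invoking Proposition 4.2 of \cite{DK3}, which asserts $\Tr(u^*|H^2_{\rm et}(X,\bbQ_l))=0$; combined with Lemma~\ref{sum} this gives $a-b=0$ at once, hence $(a,b)=(2,2)$. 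So the clean fix for your write-up is to replace the false Lefschetz identity by that citation (or to actually carry out the Picard-rank computation on the quotient), rather than to treat $e(X^u)=\Tr(u^*|H^*(X))$ as available.
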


\begin{proof} (1)--(3) are contained in \cite{DK1} and \cite{DK2}.

(4) By Proposition 4.2 \cite{DK3}, $\Tr (u^*|H^2_{\rm
et}(X,{\bbQ}_l))=0.$ Since an eigenvalue of $u^*$ is  an $11$-th
root of unity, the result follows.
\end{proof}

We now state the main result of this section.

\begin{theorem} \label{11main} Let $g$ be an automorphism of finite order of a K3 surface $X$ over an algebraically closed
field of characteristic $p=11$. Assume that the order of $g$ is
divisible by $11$. Then $${\rm ord}(g) = 11n, \,\,\,{\rm
where}\,\,\,n=1,\,2,\,3,\,6.$$ All these orders are realized by an
example (see Example \ref{exam11}).
\end{theorem}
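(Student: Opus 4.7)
The plan is to pin down the possible shape of ${\rm ord}(g)=11m$ via an eigenvalue analysis on $H^2_{\rm et}(X,\bbQ_l)$ and then rule out $m=4$ by a geometric argument on the $u$-invariant elliptic fibration of Proposition~\ref{11}.

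I first write ${\rm ord}(g)=11^am$ with $\gcd(m,11)=1$ and show $a=1$. Since the characteristic polynomial of $g^*|H^2_{\rm et}(X,\bbQ_l)$ has integer coefficients (Proposition~\ref{integral}(1)), Lemma~\ref{Weyl} forces eigenvalues into full Galois orbits; a primitive $11^jd$-th root with $j\ge 2$ would give an orbit of size $\phi(11^jd)\ge 110>22$, impossible. Hence $(g^*)^{11m}=1$ on $H^2$, and Theorem~\ref{faith} gives $g^{11m}=1$, so $a=1$. Now decompose $g=su$ into commuting powers via CRT, with $s=g^{11k}$ tame of order $m$ and $u=g^{m\ell}$ wild of order $11$. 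Proposition~\ref{11}(4) says $[u^*]=[1,\,1,\,(\zeta_{11}:10).2]$, so $[g^*]$ splits into $2$ ``trivial'' eigenvalues ($m$-th roots of unity, one being $1$ by Proposition~\ref{integral}(2), and the other $\pm 1$ by Weyl since only one slot remains) together with $20$ ``non-trivial'' eigenvalues (primitive $11d$-th roots for various $d\mid m$) in Galois orbits of size $\phi(11d)=10\phi(d)$ summing to $20$. Thus $\phi(d)\le 2$, $d\in\{1,2,3,4,6\}$; combined with ${\rm ord}(g^*)={\rm ord}(g)=11m$ (Theorem~\ref{faith}), an LCM analysis gives $m\in\{1,2,3,4,6\}$.

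The hard step is excluding $m=4$. Suppose ${\rm ord}(g)=44$, so $[g^*]=[1,\,\epsilon,\,\zeta_{44}:20]$ with $\epsilon=\pm 1$. Direct computation gives $[s^*]=[1,\,\epsilon,\,(\zeta_4:2).10]$, so $e(X^s)=\Tr(s^*|H^*)=3+\epsilon$, while $[s^{2*}]=[1,\,1,\,-1:20]$ places $s^2=g^{22}$ in the setting of Lemma~\ref{nsym2}, so $X^{s^2}$ is a smooth curve of Euler characteristic $-16$. Since $e(X^s)\neq -16$, $X^s\subsetneq X^{s^2}$ and the induced involution $s|_{X^{s^2}}$ is nontrivial with finite fixed locus, hence $X^s$ consists of $3+\epsilon$ isolated points. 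Deligne--Lusztig (Proposition~\ref{DL}) then gives $3+\epsilon=\Tr(g^*|H^*)=|X^s\cap X^u|$, forcing $X^s\subset X^u$. By Proposition~\ref{11}(1),(3), $X^u$ is either the cusp of $F_0$ (impossible, since $3+\epsilon\ge 2>1$) or all of $F_0$; hence $X^u=F_0$, and $g(F_0)=g(X^u)=X^u=F_0$, so $g$ preserves the fibration $\pi:X\to\bbP^1$. Consequently $\NS(X)^g$ contains the linearly independent classes $H$ (invariant ample) and $[F_0]$ (fibre), giving $\rank\NS(X)^g\ge 2$. For $\epsilon=-1$, $\dim H^2_{\rm et}(X,\bbQ_l)^g=1$, contradicting $\rank\NS(X)^g\ge 2$ via Proposition~\ref{diminv}(3). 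For $\epsilon=1$, the $4$ points of $X^s$ lie on $F_0$, but $s|_{F_0}$ must have order $4$ in $\Aut(F_0)=\bbG_m\ltimes\bbG_a$ (orders $1$ and $2$ being excluded: the former would make $X^s$ at least $1$-dimensional, the latter would place the non-smooth $F_0$ inside the smooth $X^{s^2}$), and such an element has only $2$ fixed points on $F_0$ (the cusp and one affine point), a contradiction.

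Examples realizing $n=1,2,3,6$ are given in Example~\ref{exam11}.
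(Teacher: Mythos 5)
Your reduction of the order to $11n$ with $n\in\{1,2,3,4,6\}$ is correct and is essentially the paper's Lemma \ref{11q} (plus the easy exclusion of $11^2$): counting Galois orbits of eigenvalues against the two ``trivial'' slots and the twenty $\zeta_{11}$-slots of $[u^*]$, together with faithfulness, is exactly the mechanism used there. The problem is in your exclusion of $n=4$, which is precisely the hard step (the paper's Lemma \ref{44}).

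The gap is the sentence ``the induced involution $s|_{X^{s^2}}$ is nontrivial with finite fixed locus, hence $X^s$ consists of $3+\epsilon$ isolated points.'' Lemma \ref{nsym2} allows $X^{s^2}$ to be \emph{disconnected}, namely $X^{s^2}=C_0\cup C_{10}$ with $C_0$ a section and $C_{10}$ a $3$-section of the induced elliptic fibration. A nontrivial involution of a disconnected smooth curve need not have finite fixed locus: $s=g^{11}$ could fix $C_0$ pointwise while acting as a genuine involution on $C_{10}$. For $\epsilon=+1$ this scenario is fully consistent with everything you have established up to that point: $X^s=C_0\cup\{2\ \text{points}\}$ has Euler number $2+2=4=\Tr(s^*|H^*(X))$, the rank bound $\rank\,\NS(X)^s\le\dim H^2_{\rm et}(X,\bbQ_l)^s=2$ is met by $\langle H,[C_0]\rangle$, and even Deligne--Lusztig is satisfied, since $\Tr(u^*|H^*(\bbP^1))=2$ plus the two $u$-fixed isolated points again gives $4=\Tr(g^*|H^*(X))$. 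So your subsequent argument (finiteness of $X^s$, hence $X^s\subset X^u=F_0$, hence the fixed-point count in $\Aut(F_0)$) never gets off the ground in this case. Ruling it out requires the geometry of the fibration: $s|_{C_0}=\mathrm{id}$ forces $s$ to act trivially on the base $\bbP^1$, hence on each $3$-point fibre of $C_{10}\to\bbP^1$, hence (an involution with infinitely many fixed points being trivial) $s|_{C_{10}}=\mathrm{id}$ as well --- and this is exactly the content of the paper's argument, which uses the absence of $11$-torsion, Lemma \ref{P1}, and the section/$3$-section structure to show $X^s=C_0\cup C_{10}$ and then contradicts $e(X^s)=\Tr(s^*|H^*(X))$. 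In other words, the paper proves that in the disconnected case $X^s$ is one-dimensional, the exact opposite of the finiteness you assert; your proof omits the entire fibration argument that the exclusion of order $44$ actually hinges on. (Your handling of the connected case $X^{s^2}=C_9$, and of $\epsilon=-1$ via the rank of $\NS(X)^g$, is fine, and the $\Aut(F_0)\cong\bbG_m\ltimes\bbG_a$ fixed-point count is a pleasant alternative to the paper's integrality-of-trace argument there.)
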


The proof follows from the following two lemmas \ref{11q} and
\ref{44}.

\begin{lemma}\label{11q} $n\neq 9$ and $n$ is not a prime $>3$.
\end{lemma}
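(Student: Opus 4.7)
The plan is to decompose $g$ into commuting tame and wild factors and constrain $[g^*]$ via the known $[u^*]$ together with the integrality of the characteristic polynomial. Since any cyclic wild subgroup of ${\rm Aut}(X)$ has order $11$ (Theorem 2.1 of \cite{DK2}), we have $\gcd(n,11)=1$; choosing $a,b\in\bbZ$ with $11a\equiv 1\pmod n$ and $nb\equiv 1\pmod{11}$, the powers $s:=g^{11a}$ and $u:=g^{nb}$ are commuting automorphisms of respective orders $n$ (tame) and $11$, with $g=su$. By Proposition \ref{11}(4),
\[
[u^*]\;=\;[\,1\!\cdot\!2,\ (\zeta_{11}{:}10)\!\cdot\! 2\,].
\]

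The key computational tool is the Galois orbit structure on $11n$-th roots of unity. Each eigenvalue of $g^*$ can be written uniquely as $\zeta_{11}^{\alpha}\zeta_n^{\beta}$ with $\alpha\in\bbZ/11$, $\beta\in\bbZ/n$, and the projection $\lambda\mapsto\lambda^{nb}$ sends it to the corresponding eigenvalue of $u^*$ (trivializing the $\beta$-component), while $\lambda\mapsto\lambda^{11a}$ sends it to the corresponding eigenvalue of $s^*$. By Proposition \ref{integral}(1) the characteristic polynomial of $g^*$ has integer coefficients, so eigenvalue multiplicities are constant on the Galois orbits of $(\bbZ/11)^{\ast}\times(\bbZ/n)^{\ast}$ acting diagonally on pairs $(\alpha,\beta)$.

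For $n$ a prime with $n\ge 5$ and $n\ne 11$, the Galois orbits $\{(0,0)\}$, $\{(0,\beta):\beta\ne 0\}$, $\{(\alpha,0):\alpha\ne 0\}$, $\{(\alpha,\beta):\alpha\ne 0,\beta\ne 0\}$ have sizes $1, n{-}1, 10, 10(n{-}1)$. Writing their per-element multiplicities as $n_0,n_n,n_{11},n_{11n}$, the conditions imposed fibrewise by $[u^*]$ read
\[
n_0+(n{-}1)n_n=2,\qquad n_{11}+(n{-}1)n_{11n}=2.
\]
Since $n{-}1\ge 4$, both equations force $n_n=n_{11n}=0$ and $n_0=n_{11}=2$, i.e.\ $[g^*]=[u^*]$. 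Consequently $s^*=g^{11a*}$ acts trivially on $H^2_{\rm et}(X,\bbQ_l)$, so by Theorem \ref{faith}, $s=1$, contradicting ${\rm ord}(s)=n\ge 5$.

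For $n=9$ the six Galois orbits of pairs $(\alpha,\beta)\in\bbZ/11\times\bbZ/9$ have sizes $1,2,6,10,20,60$, and the analogous fibrewise constraints
\[
n_0+2n_3+6n_9=2,\qquad n_{11}+2n_{33}+6n_{99}=2
\]
force $n_9=n_{99}=0$, leaving four candidates for $[g^*]$. Using $55\equiv 1\pmod 9$ and $55\equiv 0\pmod{11}$, a direct calculation shows that in each of the four cases every eigenvalue of $s^*=g^{55*}$ is a $3$rd root of unity (the primitive $33$rd roots become $\zeta_3$'s since $\zeta_{33}^{22}$ has order $3$); hence $s^{3*}$ acts trivially on $H^2_{\rm et}(X,\bbQ_l)$, and Theorem \ref{faith} yields $s^3=1$, contradicting ${\rm ord}(s)=9$. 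The main obstacle is the Galois-orbit bookkeeping, and in particular verifying that all four $n=9$ sub-cases collapse uniformly so that $[s^*]$ contains only $3$rd roots of unity.
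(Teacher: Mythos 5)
Your Galois-orbit bookkeeping is correct and, for the cases it actually treats, it proves the statement by essentially the same route as the paper: Proposition \ref{11}(4) pins down $[u^*]$, integrality of the characteristic polynomial (Lemma \ref{Weyl}) forces eigenvalue multiplicities to be constant on Galois orbits, and Theorem \ref{faith} converts ``all eigenvalues of $s^*$ are trivial (resp.\ cube roots of unity)'' into $s=1$ (resp.\ $s^3=1$). The paper reaches the same contradiction a little more directly: since $\phi(11n)>22$ no primitive $11n$-th root of unity can occur in $[g^*]$, so faithfulness forces $\zeta_n:\phi(n)\subset[g^*]$, and these $\phi(n)\ge 2$ eigenvalues, together with the invariant ample class, all collapse to $1$ in $[g^{n*}]$, exceeding the two $1$'s allowed by Proposition \ref{11}(4). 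Your fibrewise count is a systematic version of that computation, and your $n=9$ analysis (reducing $[s^*]=[g^{55*}]$ to cube roots of unity) is sound.

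The genuine gap is the case $n=11$, i.e.\ ${\rm ord}(g)=121$. This case is part of the statement ($11$ is a prime $>3$) and is needed to deduce Theorem \ref{11main}. You dispose of it by asserting $\gcd(n,11)=1$ on the strength of Theorem 2.1 of \cite{DK2}, but that theorem only bounds the characteristics in which an order-$p$ automorphism can exist ($p\le 11$); it does not assert that a cyclic wild subgroup has order exactly $p$ rather than $p^2$. Indeed the present paper proves separately (Lemma \ref{25}) that no automorphism of order $5^2$ exists in characteristic $5$, which would be vacuous under your reading of \cite{DK2}. Moreover your decomposition $g=su$ with $s$ tame of order $n$ breaks down when $11\mid n$, so the case lies genuinely outside your framework as written. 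The repair is short and uses only your existing tools: if ${\rm ord}(g)=121$ then, since $\phi(121)=110>22$, no primitive $121$-st root of unity can occur in $[g^*]$ by Lemma \ref{Weyl}, so every eigenvalue of $g^*$ is an $11$-th root of unity, hence $g^{11*}$ is trivial and Theorem \ref{faith} gives $g^{11}=1$, a contradiction. You should add this step; as it stands the proof omits one of the values of $n$ that the lemma excludes.
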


\begin{proof} Suppose that $n=9$ or a prime $\ge 5$. Then $g^{n*}$ on $H^2_{\rm
  et}(X,\bbQ_l)$, $l\ne 11$, has eigenvalues $[g^{n*}]=[1,\,1,\, (\zeta_{11}:10).2]$ where the first eigenvalue corresponds to an invariant ample divisor.
Since $\phi(11n)>20$, the eigenvalues $\zeta_{11n}:\phi(11n)$
cannot appear in $[g^{*}]$. Then by the faithfulness of Ogus (Theorem \ref{faith}), the
eigenvalues $\zeta_{n}:\phi(n)$ must appear in $[g^{*}]$. But
$\phi(n)>1$.
\end{proof}

\begin{lemma}\label{44} $n\neq 4$.
\end{lemma}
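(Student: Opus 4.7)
Suppose for contradiction that $X$ admits an automorphism $g$ of order $44 = 4\cdot 11$. The plan is to pin down the eigenvalues of $g^*$ on $H^2_{\rm et}(X,\bbQ_l)$, apply Lemma \ref{nsym2} to the involution $g^{22}$, and derive a geometric contradiction via the induced action on the quotient ruled surface.

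First I would use $[g^{4*}] = [1,1,(\zeta_{11}:10).2]$ from Proposition \ref{11}(4): each eigenvalue $\zeta$ of $g^*$ satisfies $\zeta^4 = 1$ or $\zeta^4$ is a primitive $11$-th root, so $\mathrm{ord}(\zeta) \in \{1,2,4,11,22,44\}$. Since only complete Galois orbits appear (Lemma \ref{Weyl}) and $1$ must occur (Proposition \ref{integral}(2)), the tame part of $[g^*]$ is $[1,1]$ or $[1,-1]$ and the wild part is one of $(\zeta_{11}:10).2$, $(\zeta_{22}:10).2$, $\zeta_{11}:10+\zeta_{22}:10$, or $\zeta_{44}:20$. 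The first three wild options make every eigenvalue of $g^{11*}$ equal to $\pm 1$ (since $\zeta_{11}^{11}=1$ and $\zeta_{22}^{11}=-1$), whence $g^{22*}$ would be trivial and so $g^{22}=1$ by the faithfulness Theorem \ref{faith}, contradicting $\mathrm{ord}(g)=44$. Hence $[g^*] = [1, *, \zeta_{44}:20]$ with $* \in \{1, -1\}$, and a direct computation gives $[g^{22*}] = [1, 1, -1.20]$.

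Thus $g^{22}$ is a non-symplectic involution with $\dim H^2_{\rm et}(X, \bbQ_l)^{g^{22}} = 2$, and Lemma \ref{nsym2} yields a $g^{22}$-invariant elliptic fibration $\psi: X \to \bbP^1$ together with an isomorphism $X/\langle g^{22}\rangle \cong \bbF_e$, where $X^{g^{22}}$ is either a genus-$9$ curve $C_9$ that is a $4$-section of $\psi$ (with $e \le 2$) or the disjoint union $C_0 \sqcup C_{10}$ of a section and a genus-$10$ $3$-section (with $e=4$). Since $g$ commutes with $g^{22}$, it descends to an order-$22$ automorphism $\tilde g$ of $\bbF_e$ preserving $\psi$ (a separate short argument handles $e = 0$). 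The induced base action $\bar g$ on $\bbP^1$ must have order $11$: by Lemma \ref{P1}(1) the only divisors of $44$ realised by automorphisms of $\bbP^1$ in characteristic $11$ are $\{1, 2, 4, 11\}$, and the alternatives $\mathrm{ord}(\bar g) \in \{1, 2, 4\}$ would force a suitable power of $g$ to act fibrewise by a translation, making $g^{22}$ a symplectic involution and contradicting our eigenvalue computation. Hence $\tilde g^{11}$ acts trivially on the base and is a nontrivial fibrewise involution of $\bbF_e$, whose fixed locus in characteristic $\neq 2$ is purely $1$-dimensional and equal to two sections of the ruling.

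For the final contradiction I would observe that $\pi^{-1}(\mathrm{Fix}(\tilde g^{11})) = \mathrm{Fix}(g^{11})$ under $\pi: X \to \bbF_e$: both conditions $g^{11}(x) = x$ and $g^{11}(x) = g^{22}(x)$ reduce to $x \in \mathrm{Fix}(g^{11})$ after applying $g^{-11}$. Since $\pi$ is finite, $\mathrm{Fix}(g^{11})$ must be purely $1$-dimensional, and the containment $\mathrm{Fix}(g^{11}) \subseteq X^{g^{22}}$ forces in Case I the equality $\mathrm{Fix}(g^{11}) = C_9$; then $\pi(C_9)$, a $4$-section of the ruling, would lie inside the two $1$-sections of $\mathrm{Fix}(\tilde g^{11})$, which is impossible since a $4$-section meets each fibre in $4$ points while two $1$-sections meet it in only $2$. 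In Case II the options $C_{10} \subseteq \mathrm{Fix}(g^{11})$ are excluded analogously ($\pi(C_{10})$ is a $3$-section), leaving only $\mathrm{Fix}(g^{11}) = C_0$; but then $g^{11}|_{C_{10}}$ would be an involution of the genus-$10$ curve $C_{10}$ with no fixed points (its fixed points would have to lie in $\mathrm{Fix}(g^{11}) = C_0$, disjoint from $C_{10}$), contradicting the nonexistence of fixed-point-free involutions on a smooth curve of even genus. The main obstacle will be the eigenvalue case analysis in the first paragraph, particularly verifying that the four wild options exhaust the Galois orbits compatible with $\mathrm{ord}(g^{11}) = 4$.
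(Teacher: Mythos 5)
Your proposal tracks the paper through the first two steps --- pinning down $[g^*]=[1,\pm 1,\zeta_{44}:20]$ (your Galois-orbit analysis is a correct expansion of the paper's one-line appeal to faithfulness) and applying Lemma \ref{nsym2} to $g^{22}$ --- but the endgame is genuinely different. The paper kills the $C_9$ case by Deligne--Lusztig applied to $g^{26}=g^{22}g^4$ (the possible integer traces of $u^*$ on $H^1_{\rm et}(C_9,\bbQ_l)$ are incompatible with $\Tr(g^{26*}|H^*(X))=6$), and the $C_0\cup C_{10}$ case by showing $X^{g^{11}}=C_0\cup C_{10}$, so that $e(X^{g^{11}})=-16$ contradicts the Lefschetz number $\Tr(g^{11*}|H^*(X))=4$. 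You instead exploit the identity $\mathrm{Fix}(g^{11})=\pi^{-1}(\mathrm{Fix}(\tilde g^{11}))$ for the degree-two quotient $\pi:X\to {\bf F}_e$ together with the fact that a fibrewise involution of ${\bf F}_e$ fixes exactly two sections, and dispose of both cases at once by comparing $\pi(\mathrm{Fix}(g^{11}))$ with those two sections. This is an attractive unification that avoids Deligne--Lusztig entirely, at the price of needing the fibration structure (and the triviality of $g^{11}$ on the base) in both cases rather than only in the second.

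Two points need repair. First, your justification for $\mathrm{ord}(\bar g)=11$ is wrong as stated: if $u=g^4$ acted trivially on the base it would be induced by translation by an $11$-torsion section of the Jacobian fibration, and the contradiction is that no elliptic K3 surface in characteristic $11$ carries $11$-torsion (Theorem 2.13 of \cite{DK2}) --- not that $g^{22}$ would become symplectic, which does not follow from $u$ being a fibrewise translation. Second, in Case I Lemma \ref{nsym2} only gives $e\le 2$, and for $e=0$ the induced automorphism of ${\bf P}^1\times{\bf P}^1$ could a priori interchange the two rulings; then $\tilde g^{11}$ also swaps them (since $11$ is odd) and its fixed locus is an irreducible curve of class $S_0+F$ rather than two sections, so your ``a $4$-section cannot lie in two $1$-sections'' count does not apply. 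The contradiction survives ($\pi(\mathrm{Fix}(g^{11}))$ would then be an irreducible $(1,1)$-curve that cannot equal, or be contained in, the irreducible $4$-section $C_9'$), but this sub-case has to be argued rather than flagged. The paper sidesteps both issues by invoking the fibration only in Case II, where $e=4$ forces a unique, $g$-invariant ruling.
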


\begin{proof} Suppose that $n=4$.
By by the faithfulness (Theorem \ref{faith}),
$g^*$ acting on $H^2_{\rm et}(X,{\bbQ}_l)$ has $\zeta_{44}$ as an eigenvalue and hence
$$[g^*]=[1,\,\pm 1,\, \zeta_{44}:20]$$ where the first eigenvalue corresponds to a $g$-invariant ample divisor.
Let
$$s:=g^{11},\qquad u:=g^4.$$ Then $s^2$ is an involution of $X$ with $[s^{2*}]=[1,\,1,\,-1.20].$ Thus by Lemma \ref{nsym2}, $s^2$ is non-symplectic 
and  $X/\langle
s^2 \rangle\cong {\bf F}_e$ a rational ruled surface.
Applying Deligne-Lusztig (Proposition \ref{DL}) to $g^{26}=s^2u$, we see that
$$\Tr (u^*|H^*(X^{s^2}))=\Tr (g^{26*}|H^*(X))=6.$$
Assume that ${\rm Fix}(s^2)$ is a curve $C_9$ of genus 9.
 The characteristic polynomial of $u^*|H^1_{\rm et}(C_9,{\bbQ}_l)$
has integer coefficients, hence $u^*|H^1_{\rm et}(C_9,{\bbQ}_l)$ has trace 18 or 7, so
$$\Tr (u^*|H^*(X^{s^2}))=-16\,\, {\rm or}\, -5,$$  neither
compatible with the above computation. Thus $${\rm
Fix}(s^2)=C_0\cup C_{10}$$ where $C_i$ is a curve of genus $i$. In
this case Deligne-Lusztig does not work, as  $u^*|H^1_{\rm
et}(C_{10},{\bbQ}_l)$ may have trace $-2$. We need a new argument.
Note that the induced automorphism $\bar{g}$ leaves invariant the
unique ruling of ${\bf F}_4$. Let
$$\psi:X\to {\bf P}^1$$
be the fibration of curves of genus 1 induced from the ruling on
the quotient. It follows that $g$ acts on the base ${\bf P}^1$, and
$C_0$ is a section of $\psi$ and $C_{10}$ a 3-section. Clearly
$g^{44}=1$ on ${\bf P}^1$. The order 11 automorphism $u=g^4$ of $X$
acts nontrivially on the base ${\bf P}^1$; otherwise it would be
induced by the translation by an 11-torsion, but there is no
$p$-torsion on an elliptic K3 surface in characteristic $p>7$
(Theorem 2.13 \cite{DK2}). By Lemma \ref{P1}, $s=g^{11}$ acts
trivially on the base ${\bf P}^1$. Thus we infer that $s=g^{11}$
acts trivially on both $C_0$ and $C_{10}$, i.e.,
$$X^{s}=C_0\cup C_{10}.$$ Then $e(X^{s})=-16$. On the other hand, from the list $[g^*]$ we compute that
$[s^*]=[g^{11*}]=[1,\,1,\,(\zeta_4:2).10],$ hence $\Tr (s^{*}|H^*(X))=4\neq e(X^{s})$.
\end{proof}

\begin{example}\label{exam11}
In char $p = 11$, there are K3 surfaces with an automorphism of
order 22
$$X_{\varepsilon}: y^2+x^3+\varepsilon x^2+t^{11}-t = 0, \qquad g_{\varepsilon, 22}(t,x,y)=(t+1, x, -y).$$
When $\varepsilon=0$, $X_0$ also admits an automorphism of order
66
$$g_{66}:(t,x,y) \mapsto (t+1,\zeta_3 x,-y).$$
The surface $X_{\varepsilon}$ has a
 $II$-fibre at $t=\infty$ and 22 $I_1$-fibres at $(t^{11}-t)(t^{11}-t+3\varepsilon^3)=0$ if $\varepsilon\neq 0$,  and  11 $II$-fibres at
 $t^{11}-t=0$ if $\varepsilon=0$ (\cite{DK3}, \cite{DK1}, 5.8).
\end{example}

In characteristic 11, it is known (Lemma 2.3 \cite{DK3}) that a K3
surface admitting an automorphism of order $11$ has Picard number
$2$, $12$ or $22$. For K3 surfaces with an automorphism of order
33, the second cannot occur.

\begin{proposition}\label{33} In characteristic $p=11$, if $X$ admits an automorphism of order
$33$, then the Picard number $\rho(X)=2$ or $22$.
\end{proposition}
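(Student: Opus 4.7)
The plan is to reduce everything to an eigenvalue computation for $g^{*}$ on $H^2_{\rm et}(X,\bbQ_l)$ and then intersect the resulting list of possible Picard numbers with the known constraint coming from the order-$11$ subautomorphism.

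First, I would decompose $g$ into its tame and wild parts. Since ${\rm ord}(g)=33$, the element $u:=g^{3}$ has order $11$ and $g^{11}$ has order $3$. Proposition~\ref{11} gives $[u^{*}]=[1.2,\,(\zeta_{11}:10).2]$. By the faithfulness result of Ogus (Theorem~\ref{faith}), $g^{*}$ acts on $H^2_{\rm et}(X,\bbQ_l)$ with order exactly $33$, so its eigenvalues are $33$rd roots of unity and at least one must be primitive. Since the characteristic polynomial has integer coefficients (Proposition~\ref{integral}(1)), the Weyl Lemma~\ref{Weyl} forces the characteristic polynomial to have the form
\[
(T-1)^{c}\,\Phi_{3}(T)^{a}\,\Phi_{11}(T)^{b}\,\Phi_{33}(T)^{d},
\qquad c+2a+10b+20d=22.
\]

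Next I would cube. Each primitive $3$rd root cubes to $1$, each primitive $11$th root cubes to a primitive $11$th root, and each primitive $33$rd root cubes to a primitive $11$th root. Matching multiplicities with $[u^{*}]$ gives
\[
c+2a=2,\qquad b+2d=2.
\]
The existence of a $g$-invariant ample class (Proposition~\ref{integral}(2)) forces $c\ge 1$, and faithfulness forces $d\ge 1$ (otherwise every eigenvalue of $g^{*}$ has order dividing $11$, so $g^{*11}=1$ while $g^{11}$ is a nontrivial order-$3$ automorphism). The only solution is $d=1$, $b=0$, $c=2$, $a=0$, so
\[
[g^{*}]=[1.2,\,\zeta_{33}:20]
\qquad\text{and}\qquad
\text{char.\,poly.}=(T-1)^{2}\Phi_{33}(T).
\]

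The final step is to use this to pin down $\rho(X)$. The subspace $\NS(X)\otimes\bbQ_l\subset H^2_{\rm et}(X,\bbQ_l)$ is $g^{*}$-invariant, and the restriction of $g^{*}$ to it has finite order, so its characteristic polynomial is a product of cyclotomic polynomials dividing $(T-1)^{2}\Phi_{33}(T)$. Hence it is $(T-1)^{i}\Phi_{33}(T)^{j}$ with $i\in\{0,1,2\}$ and $j\in\{0,1\}$. The presence of the invariant ample class gives $i\ge 1$, so
\[
\rho(X)=i+20j\in\{1,2,21,22\}.
\]
Intersecting with the list $\{2,12,22\}$ furnished by Lemma 2.3 of \cite{DK3} (applied to $u=g^{3}$, an automorphism of order $11$) leaves only $\rho(X)\in\{2,22\}$, which is the claim. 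The main obstacle here is really just the careful bookkeeping in step two; everything else is assembling ingredients already available in the paper.
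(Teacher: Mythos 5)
Your proof is correct, and the first part of it---deriving $[g^*]=[1,\,1,\,\zeta_{33}:20]$ from Proposition \ref{11}, the integrality of the characteristic polynomial, Lemma \ref{Weyl}, the invariant ample class, and the faithfulness theorem---is exactly the computation the paper compresses into the phrase ``our method shows.'' Your bookkeeping $c+2a=2$, $b+2d=2$ with $c\ge 1$ and $d\ge 1$ is the right way to make that explicit. Where you genuinely diverge is the last step. The paper does not invoke the trichotomy $\rho\in\{2,12,22\}$ of Lemma 2.3 of \cite{DK3} at all; instead it observes that $X/\langle g\rangle$ is rational, so by Proposition \ref{diminv} one has $\dim T_l^2(X)^g=0$, which forces \emph{both} eigenvalue-$1$ vectors to lie in $\NS(X)\otimes\bbQ_l$ (i.e.\ $i=2$ in your notation), and then the integrality of the trace on $\NS(X)$ and on $T_l^2(X)$ puts the $\Phi_{33}$-block entirely on one side or the other, giving $\rho=2$ or $22$ directly. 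Your route instead gets $\rho\in\{1,2,21,22\}$ from the restriction of the characteristic polynomial and then intersects with the externally known list $\{2,12,22\}$. Both are valid: your argument is slightly more economical in that it avoids having to know that the quotient by the order-$33$ action is rational (a fact the paper asserts without proof and which requires a little care, since a priori the quotient by the order-$11$ subgroup could be a K3 surface with rational double points), at the cost of leaning on the DK3 classification of Picard numbers as a black box. The paper's version is self-contained given the rationality of the quotient and in fact pins down $i=2$ rather than merely $i\ge 1$.
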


\begin{proof} Let $g$ be an automorphism of order
$33$. Our method shows that the action of $g^*$ on $H^2_{\rm
  et}(X,\bbQ_l)$, $l\ne 11$, has eigenvalues
  $[g^*]=[1,\, 1,\, \zeta_{33}:20].$
On the other hand, by Proposition \ref{diminv} both traces of
$g^*$ on ${\rm NS}(X)$ and on $T_l^2(X)$ are integers. Since
$X/\langle g\rangle$ is a rational surface, $\dim T_l^2(X)^g=0$.
\end{proof}

\section{The case: $p=7$}

In this section we determine the orders of wild automorphisms in
characteristic $p=7$. We first improve the previous results from
\cite{DK1} and \cite{DK2}.

\begin{proposition}\label{7} Let $u$ be an automorphism of order $7$ of a K3 surface $X$ over an algebraically closed
field of characteristic $p=7$. Then
\begin{enumerate}
\item the fixed locus $X^u$ is a connected curve or a point and $X/\langle u\rangle$ is a rational surface;
\item $X$ admits an $u$-invariant fibration $\psi: X\to {\bf P}^1$ of curves of arithmetic genus $1$, with or without a section, and there is a specific fibre $F_0$ such that $X^u$ is either the
support of $F_0$ or a point of $F_0$;
\item if  $X^u={\rm supp}(F_0)$, then $u$ acts on ${\bf P}^1$ non-trivially and $F_0$ is of type $II^*$ or $III$;
\item if  $X^u$  is a point, then $F_0$ is of type $III$,  $X^u$ is the singular point of $F_0$ and
\begin{enumerate}
\item $u$ acts on ${\bf P}^1$ non-trivially; or
\item $u$ acts on ${\bf P}^1$ trivially, $\psi$ has $3$ singular fibres of type $I_7$ away from $F_0$ and $u$ is induced by the translation by a $7$-torsion of the Jacobian fibration of
$\psi$;
\end{enumerate}
\item  $u^*|H^2_{\rm
  et}(X,\bbQ_l)$, $l\ne 7$, has eigenvalues
  $$[u^*]=[1,\, 1.9,\, (\zeta_7:6).2]\,\,\,or\,\,\,[1,\, 1.3,\, (\zeta_7:6).3]$$
  respectively if $F_0$  is of type $II^*$ or $III$, where the first eigenvalue corresponds to an u-invariant ample divisor.
\end{enumerate}
\end{proposition}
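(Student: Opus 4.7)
The plan is to build on the results of \cite{DK1}, \cite{DK2}, which already establish that $X^u$ is a connected curve or a single point, that $X/\langle u\rangle$ is rational, and that there is a $u$-invariant fibration $\psi\colon X\to\bbP^1$ of curves of arithmetic genus $1$ with $X^u$ contained in a single fibre $F_0$. This yields (1) and most of (2); the remaining content is to pin down the Kodaira type of $F_0$, describe the action of $u$ on the base, and compute the eigenvalue list on $H^2_{\rm et}(X,\bbQ_l)$.

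First I would analyse the action of $u$ on $\bbP^1$. By Lemma \ref{P1} it is either trivial or of order $7$ with a unique base fixed point, which must be $\psi(F_0)$. In the trivial case, $u$ acts on each smooth fibre as a translation of the elliptic curve, and since $u$ has order $7=p$, it must be induced by translation by a non-trivial $7$-torsion section of the Jacobian of $\psi$; such a translation is fixed-point-free on all smooth fibres and on multiplicative components, forcing $X^u$ to be a single singular point of an additive fibre, which is exactly subcase (4b). In the non-trivial case, $u$ permutes the smooth part of every fibre away from $F_0$, so $X^u$ lies entirely in $F_0$.

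Next I would classify $F_0$. Assume first $X^u$ is the full support of $F_0$. A wild order-$7$ automorphism cannot pointwise fix a smooth elliptic curve (in characteristic $7$ there is no automorphism of order $7$ of an elliptic curve) nor a multiplicative $I_n$-configuration, so $F_0$ must be an additive reducible fibre with all components rational and $u$-fixed pointwise. A local analysis at each node, together with the constraint that the induced action on a transversal direction be non-trivial of order $7$, restricts the dual graph to $E_8$ or to a single tangential intersection, i.e.\ $F_0$ is of type $II^*$ or $III$; this gives (3). For (4), where $X^u$ is a single point $P$, $P$ must be a singular point of $F_0$, and the same local analysis (now requiring no smooth fixed points) forces $F_0$ to be of type $III$ with $P$ its tacnode. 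In subcase (4b), Shioda--Tate together with $e(X)=24$, $e(F_0)=3$, and the fact that every singular fibre must have component group containing $\bbZ/7\bbZ$ to carry a $7$-torsion section, forces the other singular fibres to be exactly three copies of $I_7$.

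For (5), Proposition \ref{integral}(1) gives an integer characteristic polynomial of $u^*$, so non-fixed eigenvalues come in Galois-conjugate blocks $\zeta_7\!:\!6$. Proposition \ref{diminv}(4) together with the rationality of $X/\langle u\rangle$ yields $\dim H^2_{\rm et}(X,\bbQ_l)^u=\operatorname{rank}\NS(X)^u$. When $F_0$ is of type $II^*$, the trivial sublattice generated by an invariant ample class, the class of the general fibre, and the nine components of $F_0$ has rank $10$; when $F_0$ is of type $III$, the analogous trivial sublattice has rank $4$. Writing $22=\dim H^2_{\rm et}(X,\bbQ_l)^u+6k$ with $k\in\{2,3\}$ then forces the two claimed eigenvalue lists. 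The main obstacle is the fibre-type classification: rigorously ruling out all Kodaira and quasi-elliptic types other than $II^*$ and $III$ in the curve-fixed case, and other than $III$ in the point-fixed case. The local analysis of a wild order-$7$ action near a singular fibre component must be done carefully in characteristic $7$, where quasi-elliptic phenomena appear and where the action on formal neighbourhoods of nodes is subtle; separating subcases (4a) and (4b) also needs a global Shioda--Tate input rather than a purely local argument.
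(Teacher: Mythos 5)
Your overall architecture matches the paper's: parts (1)--(3) are quoted from \cite{DK1}, \cite{DK2}, the base action on ${\bf P}^1$ is split into trivial/non-trivial via Lemma \ref{P1}, and part (5) is reduced via Proposition \ref{diminv} and the rationality of $X/\langle u\rangle$ to computing $\dim H^2_{\rm et}(X,\bbQ_l)^u=\rank\,{\rm NS}(X)^u$. However, there are three genuine gaps. First, in case (4a) you claim a ``local analysis'' at the fixed point forces $F_0$ to be of type $III$, but no local analysis can do this: in characteristic $7$ a fibre of type $II$ (translations of $\bbG_a$ on the smooth locus fix only the cusp) and a fibre of type $IV$ (each of the three lines is preserved and an order-$7$ action on ${\bf P}^1$ has a unique fixed point, which can be taken to be the common point) both admit order-$7$ automorphisms with exactly one fixed point. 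The paper's argument is global, not local: the singular fibres away from $F_0$ form $u$-orbits of length $7$, so $e(F_0)=24-7k$, and one rules out $e(F_0)\ge 10$ by showing no such fibre carries an order-$7$ automorphism with a single fixed point; this forces $e(F_0)=3$, and then $I_3$ is excluded. You flag the fibre-type classification as ``the main obstacle'' but do not supply this counting argument, which is the actual content.

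Second, in case (4b) your appeal to Shioda--Tate plus the component-group constraint does not yield three fibres of type $I_7$: the configurations $I_{14}+I_7$ and $I_{21}$ also satisfy $\sum m_i=3$, have $7$ dividing each component group, and are consistent with $\rho(X)\le 22$. The paper pins down $m_1=m_2=m_3=1$ by applying the explicit height-pairing/discriminant formula for the Mordell--Weil lattice (\cite{CoxZ}, \cite{Shioda3}) on the Jacobian fibration; some such quantitative input is unavoidable here. Third, your derivation of (5) only produces a \emph{lower} bound $\rank\,{\rm NS}(X)^u\ge 10$ (resp.\ $\ge 4$) from the trivial sublattice, and the constraint $22=\dim H^2_{\rm et}(X,\bbQ_l)^u+6k$ with unjustified $k\in\{2,3\}$ does not close the argument: in the type $III$ case the values $\dim H^2_{\rm et}(X,\bbQ_l)^u=4$, $10$ and $16$ all remain possible (a $u$-invariant section class or orbit sums of components of permuted reducible fibres could enlarge ${\rm NS}(X)^u$ beyond the trivial sublattice). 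The paper instead computes $\rank\,{\rm NS}(X/\langle u\rangle)$ \emph{exactly}, by comparing the number of components of the fibre $\bar F_0$ of the (singular) quotient rational elliptic surface with the number of components of the corresponding fibre of the relatively minimal model of its resolution ($9$ versus $9$ for $II^*$, giving rank $10$; $2$ versus $8$ for $III$, giving rank $10-6=4$). Without this exact computation the eigenvalue lists in (5) are not forced.
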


\begin{proof} The assertions (1)--(3) are explicitly stated in  \cite{DK1} and \cite{DK2}. We will prove the last two assertions.

(4) Assume that $X^u$ is a point. Let $F_0$ be the fibre of $\psi$
containing $X^u$.

\medskip
Assume that $u$ acts on ${\bf P}^1$ non-trivially. The singular
fibres other than $F_0$ form orbits under the action of $u$, hence
the sum of their Euler numbers is divisible by the characteristic.
If the sum $\le 14$, then $F_0$ has Euler number $e(F_0)\ge 10$.
But no singular fibre with Euler number $\ge 10$ admits an order 7
automorphism with just one fixed point. Thus the sum is $21$. It
follows that $e(F_0)=3$ and hence $F_0$ is of type $III$ or $I_3$.
Since a fibre of type $I_3$ does not admit an order 7 automorphism
with one fixed point,  $F_0$ is of type $III$ and $X^u$ is the
singular point of $F_0$.

\medskip
Assume that $u$ acts on ${\bf P}^1$ trivially. Then $u$ is induced by
the translation by a $7$-torsion of the Jacobian fibration of
$\psi$. Each singular fibre other than $F_0$ is acted on by $u$
without fixed points, so must be of type $I_{7m}$. Let
$$I_{7m_1}, \ldots, I_{7m_r}$$ be the types of all singular fibres
other than $F_0$. Then $$e(F_0)=24-7\sum m_i.$$ Since $F_0$ admits
an order 7  automorphism with one fixed point, this is possible
only if $\sum m_i=3$ and $F_0$ is of type $III$. To determine
$m_1\ldots, m_r$, we look at the Jacobian fibration. It is known
that the Jacobian fibration is again a K3 surface with singular
fibres of the same type as $\psi$ (see \cite{CD}). So, to
determine the types of singular fibres, we may assume that $\psi$
has a section and $u$ is the translation by a $7$-torsion of the
fibration $\psi$. Applying the explicit formula for the height
pairing (\cite{CoxZ} or Theorem 8.6 \cite{Shioda3}) on the
Mordell-Weil group of an elliptic surface, or in our K3 case the
formula given in \cite{DK1}, p.121,   we deduce that $r=3$ and
$m_1=m_2=m_3=1.$

\bigskip
(5) Assume that $F_0$ is of type $II^*$. Then the 9 components of
$F_0$ are fixed by $u$. The rational elliptic surface $X/\langle
u\rangle$ is singular along the fibre  $\bar{F_0}$ coming from
$F_0$, but the sum of Euler numbers of other singular fibres is 2.
We infer that  the fibre $Y_0$ of the relative minimal model $Y$
of the minimal resolution of $X/\langle u\rangle$ corresponding to
$\bar{F_0}$ must be of type $II^*$ or $I_4^*$. Since the number of
components of $Y_0$ is the number of components of  $\bar{F_0}$,
we see that $$\rank~{\rm NS}(X/\langle u\rangle)=10.$$ Then by
Proposition \ref{diminv}, $\dim H^2_{\rm
et}(X,{\bbQ}_l)^u=\rank~{\rm NS}(X)^u=10.$

Assume that $F_0$ is of type $III$. Then the order 7 automorphism
$u$ preserves each of the two components of $F_0$. The rational
elliptic surface $X/\langle u\rangle$ is singular along the fibre
$\bar{F_0}$ coming from $F_0$, but the sum of Euler numbers of
other singular fibres is 3.  We infer that the fibre $Y_0$ of the
relative minimal model $Y$ of the minimal resolution of $X/\langle
u\rangle$ corresponding to $\bar{F_0}$ must be of type $III^*$ or
$I_3^*$. Since the number of components of $Y_0$ is 8 while the
number of components of  $\bar{F_0}$ is 2, we see that
$$\rank~{\rm NS}(X/\langle u\rangle)=10-6=4.$$ Then  by
Proposition \ref{diminv}, $\dim H^2_{\rm
et}(X,{\bbQ}_l)^u=\rank~{\rm NS}(X)^u=4.$
\end{proof}

The following is the main result of this section.

\begin{theorem} \label{7main} Let $g$ be an automorphism of finite order of a K3 surface $X$ over an algebraically closed
field of characteristic $p=7$. Assume that the order of $g$ is
divisible by $7$. Then $${\rm ord}(g) = 7n, \,\,\,{\rm
where}\,\,\,n=1,\,2,\,3,\,4,\,6.$$ All these orders are realized
by an example (see Example \ref{exam7}).
\end{theorem}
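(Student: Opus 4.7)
The plan is to first show $49\nmid {\rm ord}(g)$, reducing to ${\rm ord}(g)=7n$ with $\gcd(n,7)=1$; then to use the faithfulness of the $l$-adic representation (Theorem \ref{faith}) together with the structure theorem Proposition \ref{7}(5) to turn the assertion into a finite eigenvalue-combinatorics problem; and finally to rule out the remaining forbidden values of $n$ by combining Weyl's Lemma (Lemma \ref{Weyl}), the Deligne--Lusztig trace formula (Proposition \ref{DL}), and the geometry of the $u$-invariant fibration of Proposition \ref{7}(2).

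First I would show that $49\nmid {\rm ord}(g)$: otherwise some power of $g$ has order $49$ and, by Theorem \ref{faith}, acts with order $49$ on $H^2_{\rm et}(X,\bbQ_l)$, forcing a primitive $49$th root of unity as an eigenvalue. Lemma \ref{Weyl} then demands all $\phi(49)=42$ primitive $49$th roots to appear, contradicting $\dim H^2_{\rm et}(X,\bbQ_l)=22$. Hence ${\rm ord}(g)=7n$ with $\gcd(n,7)=1$, and since $g^7$ is tame of order $n$, Theorem \ref{main-tame} gives $\phi(n)\le 20$. Setting $u:=g^n$ of order $7$, Proposition \ref{7}(5) gives $[u^*]$ equal to either $[1,1.9,(\zeta_7:6).2]$ (the $II^*$-case) or $[1,1.3,(\zeta_7:6).3]$ (the $III$-case).

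Next I would partition the $22$ eigenvalues of $g^*$ into \emph{tame} ones whose order divides $n$ and \emph{wild} ones whose $n$th power is a primitive $7$th root of unity, so whose own order is $7n''$ for some $n''\mid n$. Lemma \ref{Weyl} forces tame primitive $d$th roots to occur in blocks of $\phi(d)$ and wild ones of order $7n''$ in blocks of $6\phi(n'')$; Proposition \ref{7}(5) dictates the tame/wild split $10+12$ or $4+18$; and faithfulness forces the least common multiple of eigenvalue orders to equal $7n$. In particular, whenever $\zeta_{7n}$ appears as an eigenvalue, Weyl's Lemma gives $6\phi(n)\le 18$, i.e.\ $\phi(n)\le 3$, which (since $\phi(n)$ is even for $n>2$) forces $n\in\{1,2,3,4,6\}$. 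The wild-sum identity $\sum w_{n''}\phi(n'')\in\{2,3\}$, combined with the tame identity $\sum m_d\phi(d)\in\{10,4\}$ and the ${\rm lcm}$ constraint, leaves only a short list of residual small $n$ (of the type $5,8,9,10,12,15,18$) for which $\zeta_{7n}$ cannot occur but the remaining freedom admits a combinatorial solution.

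These residual candidates are ruled out by combining three ingredients: (i) since $g$ commutes with $u$, it preserves both the $u$-invariant elliptic fibration $\psi\colon X\to\bbP^1$ and the distinguished fibre $F_0$ of Proposition \ref{7}(2)(3), and its induced action on the base $\bbP^1$ is constrained by Lemma \ref{P1}; (ii) the Deligne--Lusztig identity $\Tr(g^*|H^*(X))=\Tr(u^*|H^*(X^s))$ with $s$ the prime-to-$7$ part of $g$, where the left side is computed from the tentative eigenvalue distribution of the previous paragraph and the right side is constrained by the shape of the tame fixed locus $X^s=X^{g^7}$ together with the restricted wild action of $u$ on it; and (iii) compatibility of $[g^{7*}]$ with the Lemma \ref{Lefschetz}/Nikulin-type eigenvalue lists for tame automorphisms of order $n$ on a K3 surface. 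For each forbidden $n$ one checks that no distribution satisfies (i)--(iii) simultaneously. The orders $7,14,21,28,42$ are realised by Example \ref{exam7}. The main obstacle will be step (iii) for the smallest residual cases (typically $n=5,8,9,10,12$), where the crude Weyl count leaves several a priori viable distributions, so the elimination must juggle the Deligne--Lusztig trace identity against the explicit geometry of the fixed locus of $g^7$ and the $p$-power structure on the base $\bbP^1$; this is analogous in spirit to, but technically more delicate than, the elimination of order $44$ in Lemma \ref{44} of the $p=11$ section.
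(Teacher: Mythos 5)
Your outline follows the same general strategy as the paper (faithfulness plus eigenvalue bookkeeping via Lemma \ref{Weyl}, Deligne--Lusztig, and the geometry of the $u$-invariant fibration), but as written it has a genuine gap: the combinatorial reduction in your second paragraph is weaker than you claim, and the steps you defer to ``juggling'' are precisely where all the work lies. Concretely, in the $II^*$-case of Proposition \ref{7}(5) your tame/wild split $10+12$ does not by itself exclude, say, $n=5$ or $n=8$: the list $[1,\,1.5,\,\zeta_5:4,\,(\zeta_7:6).2]$ passes every test you impose (integrality, Weyl blocks, an eigenvalue $1$, lcm of orders equal to $35$). What kills these cases in the paper is a geometric input you do not use: since $g$ acts on $X^u=\mathrm{supp}(F_0)$ and the $E_8$-shaped dual graph of a $II^*$ fibre is rigid, $g$ preserves all $9$ components, so together with an invariant ample class $\rank\,\NS(X)^g=10$; hence \emph{all ten} tame eigenvalues of $g^*$ equal $1$, faithfulness then forces $\zeta_{7n}$ into the $12$ wild slots, and $\phi(7n)=6\phi(n)>12$ gives the contradiction (Lemma \ref{7q}). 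Your framework leaves the tame slots free and so does not close these cases.

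The second, larger gap concerns $n=4$ and $n=12$. You treat $n=4$ as merely ``realised by an example,'' but the paper must prove the nontrivial Lemma \ref{28}: an order-$28$ automorphism cannot have $[g^{4*}]=[1,\,1.9,\,(\zeta_7:6).2]$, i.e.\ the candidate list $[1,\,1.9,\,\zeta_{28}:12]$ must be excluded. This list survives every eigenvalue and trace test; its elimination requires determining $\mathrm{Fix}(g^{14})$ component by component along the $II^*$ chain, showing the residual fixed curve splits as a section plus a $3$-section, proving $g^7$ acts trivially on the base via Lemma \ref{P1}, and deriving a contradiction from an induced involution of a ruled surface fixing $4$ points on a general fibre --- exactly in a situation where the paper notes Deligne--Lusztig fails because $\Tr(u^*|H^1_{\rm et}(C_6,\bbQ_l))$ may equal $-2$. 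Moreover Lemma \ref{28} is not optional: the exclusion of $n=12$ (Lemma \ref{84}) uses it, applied to $g^3$, to pin down $[g^{12*}]=[1,\,1.3,\,(\zeta_7:6).3]$ before the case analysis even starts, and then still needs orbit-counting on the rational components of $\mathrm{Fix}(g^{42})$ against the constraint $\dim H^2_{\rm et}(X,\bbQ_l)^{g^6}=2$. Your ingredients (i)--(iii) do not supply any of these arguments, so the proposal is a correct roadmap of the paper's method but not a proof.
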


The proof follows from the following lemmas \ref{7q}---\ref{84}.

\begin{lemma}\label{7q}   $n\neq 9$, $n\neq 8$ and $n\neq$  a prime $>3$.
\end{lemma}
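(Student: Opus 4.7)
Suppose $g$ has order $7n$ with $n$ either a prime $\ge 5$, or $n=8$, or $n=9$; the plan is to derive a contradiction from the list $[g^*]$ of eigenvalues of $g^*$ on $H^2_{\rm et}(X,\bbQ_l)$. The three inputs are Proposition \ref{7}(5) (which pins down $[u^*]$ for $u=g^n$ of order $7$), Ogus's faithfulness (Theorem \ref{faith}), and the Deligne--Lusztig trace formula (Proposition \ref{DL}).

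First I set up the eigenvalue bookkeeping. Every eigenvalue of $g^*$ is a $7n$-th root of unity and occurs with its full Galois orbit (Lemma \ref{Weyl}). In each case $\phi(7n)>22$: indeed $\phi(35)=24$, $\phi(49)=42$, $\phi(56)=24$, $\phi(63)=36$, and for any prime $q\ge 11$ one has $\phi(7q)=6(q-1)\ge 60$. Hence no primitive $7n$-th root appears in $[g^*]$. Raising to the $n$-th power sends eigenvalues of order dividing $n$ to $1$ and all others to primitive $7$-th roots; the resulting multiset must equal $[u^*]$, which by Proposition \ref{7}(5) has either ten $1$'s and twelve primitive $7$-th roots, or four $1$'s and eighteen primitive $7$-th roots.

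The easy cases then fall immediately. For $n=7$ every eigenvalue of $g^*$ has order dividing $7$, so $g^{7*}$ is the identity on $H^2_{\rm et}(X,\bbQ_l)$, contradicting Proposition \ref{7}(5). For a prime $n\ge 11$, faithfulness of the tame part $s=g^{7}$ forces $\zeta_n$ (with its $\phi(n)\ge 10$ Galois conjugates) to appear in $[g^*]$; together with the invariant ample class these yield at least $11$ ones in $[g^{n*}]$, incompatible with both forms of $[u^*]$. For $n=9$ and $n=8$, faithfulness of $g^7$ (of orders $9$ and $8$ respectively) forces at least $\phi(9)=6$ primitive $9$-th roots, respectively $\phi(8)=4$ primitive $8$-th roots, in $[g^*]$: this kills the four-ones form of $[u^*]$ and reduces the ten-ones form to a short list of combinatorial possibilities. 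For $n=5$ the bookkeeping leaves exactly the two shapes $[g^*]=[1.6,\,\zeta_5:4,\,(\zeta_7:6).2]$ and $[g^*]=[1.2,\,(\zeta_5:4).2,\,(\zeta_7:6).2]$.

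For each surviving sub-case I would apply Proposition \ref{DL} to the commuting decomposition $g=s'u'$, where $s'=g^{7k}$ is the tame part of order $n$ and $u'=g^{nl}$ is the wild part of order $7$. This gives
\[
\Tr\bigl(g^{*}\mid H^{*}(X)\bigr)=\Tr\bigl((u')^{*}\mid H^{*}(X^{s'})\bigr).
\]
The left side is read directly off $[g^*]$; the right side is controlled by the structure of the smooth tame fixed locus $X^{s'}$, whose Euler characteristic equals $\Tr((s')^{*}\mid H^{*}(X))$ by Proposition \ref{trace} and is therefore determined by $[g^*]$. Using the classification of non-symplectic order-$n$ fixed loci on K3 surfaces, together with the fact that a wild order-$7$ automorphism of $X^{s'}$ either fixes a component pointwise or cyclically permutes seven components, one bounds the right-hand trace and rules out each remaining sub-case. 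The hard part will be this Deligne--Lusztig step: in the $p=11$ analogue (Lemma \ref{11q}) the relevant $[u^*]$ has only two ones and pure counting closes the argument, whereas for $p=7$ the larger $u$-invariant subspace in Proposition \ref{7}(5) leaves genuine combinatorial slack, so eliminating the surviving sub-cases for $n=5,8,9$ requires the geometric input above on tame fixed loci and on wild order-$7$ actions on curves.
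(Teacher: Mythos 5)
Your eigenvalue bookkeeping is sound and, for part of the lemma, it reproduces the paper's argument: in the case $[u^*]=[1,\,1.3,\,(\zeta_7:6).3]$ the observation that $\zeta_{7n}\notin[g^*]$ (since $\phi(7n)>22$, or $>18$ as the paper puts it) together with ``$\zeta_n\in[g^*]$ would force $\phi(n)+1\ge 5$ ones in $[u^*]$'' closes every $n$, and your count for primes $n\ge 11$ works in both cases. The gap is in the other case, $[u^*]=[1,\,1.9,\,(\zeta_7:6).2]$, for $n=5,\,8,\,9$: you correctly observe that pure counting leaves surviving shapes (e.g.\ $[1.6,\,\zeta_5:4,\,(\zeta_7:6).2]$ and $[1.2,\,(\zeta_5:4).2,\,(\zeta_7:6).2]$ for $n=5$), but the step that is supposed to eliminate them --- Deligne--Lusztig applied to $g=s'u'$ plus ``the classification of non-symplectic order-$n$ fixed loci'' --- is only a plan, not an argument. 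Nothing in your sketch computes $\Tr((u')^*|H^*(X^{s'}))$ for the actual fixed loci that occur, and the paper itself repeatedly notes that this trace is \emph{not} determined by $[g^*]$ when $X^{s'}$ contains a curve of large genus (the order-$7$ action on $H^1$ of such a curve can have several integer traces), so the bound you assert does not come for free. Your dichotomy ``a wild order-$7$ automorphism either fixes a component pointwise or cyclically permutes seven components'' is also false as stated: it can preserve a component and act on it with a single fixed point.

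What you are missing is the geometric input attached to this eigenvalue pattern by Proposition \ref{7}: when $[u^*]=[1,\,1.9,\,(\zeta_7:6).2]$ the fixed locus $X^u$ is the support of a fibre $F_0$ of type $II^*$. Since $g$ commutes with $u$ it acts on $X^u$, and the $II^*$ configuration has no nontrivial symmetries, so $g$ preserves each of the $9$ components. These components together with a $g$-invariant ample class give $\rank\,{\rm NS}(X)^g\ge 10=\dim H^2_{\rm et}(X,\bbQ_l)^u$, hence $H^2_{\rm et}(X,\bbQ_l)^g=H^2_{\rm et}(X,\bbQ_l)^u$ and the eigenvalue $1$ occurs $10$ times in $[g^*]$; in particular no eigenvalue of order $n$ (or any nontrivial divisor of $n$) can occur, so faithfulness forces $\zeta_{7n}\in[g^*]$, which is impossible because $\phi(7n)>12$. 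This one observation kills every surviving sub-case at once --- for instance both of your $n=5$ shapes have $\dim H^2_{\rm et}(X,\bbQ_l)^g$ equal to $6$ or $2$, not $10$. Without it (or a genuinely worked-out substitute), your proof of the lemma is incomplete for $n=5,\,8,\,9$.
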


\begin{proof} Consider the order 7 automorphism $$u:=g^{n}.$$ Suppose that $n=9$ or 8 or
 a prime $\ge 5$.

\bigskip
Case: $[u^*]=[1,\, 1.3,\, (\zeta_7:6).3]$. By the faithfulness of Ogus (Theorem \ref{faith}),
$g^*$ on $H^2_{\rm et}(X,\bbQ_l)$ should have $\zeta_n$ or
$\zeta_{7n}$ as an eigenvalue. If  $\zeta_n\in[g^*]$, then the
eigenvalues $1.\phi(n)$ appear in $[u^*]$, but $\phi(n)>3$. Since
$\phi(7n)>18$, $\zeta_{7n}\notin[g^*]$.

\bigskip
Case: $[u^*]=[1,\, 1.9,\, (\zeta_7:6).2]$. In this case  $X^u$ is
the support of a $II^*$ fibre $F_0$. Since $g$ acts on $X^u$, it
preserves every component of $F_0$. The 9 components and a
$g$-invariant ample class are
  linearly independent in the N\'eron-Severi group. Thus
  $\rank~\NS(X)^g= 10$. Then  by the faithfulness, $\zeta_{7n}\in[g^*]$, but $\phi(7n)>12$.
\end{proof}

\begin{lemma}\label{28} If $n=4$, then
$$[g^{4*}]=[1,\, 1.3,\, (\zeta_7:6).3].$$
\end{lemma}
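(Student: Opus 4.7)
I will argue by contradiction: assume $[g^{4*}]=[1,\,1.9,\,(\zeta_7\!:\!6).2]$, so by Proposition~\ref{7}(5) the fixed fibre $F_0$ of $u=g^4$ is of type $II^*$ on the $u$-invariant elliptic fibration $\psi\colon X\to\bbP^1$.

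\emph{Step 1 (pin down $[g^*]$).} The extended Dynkin diagram $\tilde E_8$ admits no nontrivial graph-automorphism (its three arms have pairwise distinct lengths $1,2,5$), so $g$ preserves every component of $F_0$. Those nine $g$-invariant classes together with a $g$-invariant ample class yield $\rank\NS(X)^g\ge 10$, and since $X/\langle u\rangle$ is rational (Prop.~\ref{7}(1)), $T_l^2(X)^g\subseteq T_l^2(X)^u=0$; Proposition~\ref{diminv} then gives $\dim H^2_{\rm et}(X,\bbQ_l)^g\ge 10$. On the other hand, the number of fourth-root-of-unity eigenvalues of $g^*$ equals the dimension of the $1$-eigenspace of $g^{4*}=u^*$, which is $1+9=10$. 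Combining forces all ten to equal $1$. Theorem~\ref{faith} then gives $g^*$ of order $28$, so a primitive $28$th root of unity is an eigenvalue, and Lemma~\ref{Weyl} spreads this to all twelve primitive $28$th roots, yielding $[g^*]=[1.10,\,(\zeta_{28}\!:\!12)]$.

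\emph{Step 2 ($g^7$ is translation by a Mordell--Weil section of order $4$).} Proposition~\ref{7}(3) gives $\bar u=\bar g^4$ of order $7$ on $\bbP^1$, so Lemma~\ref{P1}(1) (with $m=4$, $p=7$) yields $\bar g^7=1$ and hence $g^7$ preserves every fibre of $\psi$. Computing $[g^{7*}]=[1.10,\,(\zeta_4\!:\!2).6]$ shows $g^{14}\ne 1$, so $g^7|_{E_\eta}$ on the generic fibre cannot be trivial (otherwise $g^7=1$ by density) and cannot have order $2$ (otherwise $g^{14}|_{E_\eta}=1$, hence $g^{14}=1$). Writing $g^7|_{E_\eta}=\tau_T\circ\alpha$ with $\alpha\in\Aut(E_\eta,0)$ and $T$ a section of $\psi$, an $\alpha$ of order $4$ would force $j(E_\eta)=1728$ identically, making $\psi$ isotrivial with monodromy contained in $\bbZ/4$ --- incompatible with the $II^*$-fibre's local monodromy of order $6$. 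So $\alpha=\text{id}$ and $T$ has Mordell--Weil order $4$.

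\emph{Step 3 (Euler-characteristic contradiction).} Since $II^*$ has trivial component group, $T$ meets $F_0$ inside the identity component $\Theta_0^{\rm reg}\cong\bbG_a$; but in characteristic $7$ the group $\bbG_a(k)$ is $7$-torsion and so contains no element of order $4$, forcing $T|_{F_0}=0$ and hence $F_0\subseteq X^{g^7}$, contributing $e(F_0)=10$. The other singular fibres of $\psi$ have total Euler characteristic $14$ and split into $\bar g$-orbits of size $7$ (as $\bar g$ of order $7$ fixes only $\psi(F_0)$), so the configuration is one of (a)~fourteen $I_1$'s, (b)~seven $I_2$'s, (c)~seven $II$'s. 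In case (c), the same $\bbG_a$-argument forces $\tau_T$ to fix each $II$-fibre entirely, contributing $14$. In case (a), each $I_1$ has its node fixed by $\tau_T$, contributing $1$ per fibre and $14$ total. In case (b), the commutation $g\cdot g^7=g^7\cdot g$ gives $g\tau_T g^{-1}=\tau_{g(T)}=\tau_T$, hence $g(T)=T$ as sections; since $g$ acts trivially on the constant component-group $\bbZ/2$, the $\bbZ/2$-coordinate of $T|_{I_2}$ is uniform along the orbit, so each $I_2$ contributes the same value (either $0$ or $2$) for a total of $0$ or $14$. In every case $e(X^{g^7})\in\{10,24\}$, contradicting the Lefschetz equality $e(X^{g^7})=\Tr(g^{7*}|H^*(X))=12$.

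The delicate point will be case (b): one must exploit the $g$-invariance of $T$ to force orbit-uniformity, thereby excluding the a priori possibility that the seven $I_2$ contributions conspire to supply exactly the missing value $2$.
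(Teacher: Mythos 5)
Your Step 1 reproduces the paper's opening move and is fine: the paper likewise gets $\rank\,\NS(X)^g=10$ from the nine components of the $II^*$ fibre plus an invariant ample class and concludes $[g^*]=[1,\,1.9,\,\zeta_{28}\!:\!12]$. From there your route diverges completely. The paper studies the involution $s^2=g^{14}$, pins down $\mathrm{Fix}(g^{14})=R_2\cup R_4\cup R_6\cup R_8\cup C_0\cup C_6$ (four components of $F_0$, a section $C_0$ and a genus-$6$ trisection $C_6$), uses Deligne--Lusztig on $g^{18}=s^2u$ to exclude the irreducible alternative for the horizontal part, and then derives the contradiction from the involution induced by $s=g^7$ on the quotient ruled surface $Y\to X/\langle s^2\rangle$, which would have four fixed points on a general fibre of the ruling. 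Your Step 2 --- identifying $g^7|_{E_\eta}$ as translation by a Mordell--Weil element $T$ of exact order $4$ --- is a genuinely different and essentially sound idea (modulo the caveat that $\psi$ may have no section, so this should be phrased via the Jacobian fibration).

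Step 3, however, has a genuine gap, and it also misses that Step 2 already finishes the proof. The gap: from ``$T$ specializes to $0$ in $\bbG_a$'' you conclude $F_0\subseteq X^{g^7}$. This does not follow: a translation specializing to the identity fixes the identity component of $F_0$ pointwise, but at the point where that component meets the next one a tame order-$4$ automorphism may act on the tangent space with eigenvalues $(1,\zeta_4)$, so the remaining eight components need not be fixed pointwise; indeed $F_0\subseteq X^{g^7}$ is impossible outright, since the fixed locus of a nontrivial tame automorphism is smooth and the $II^*$ configuration is not. Your list of configurations for the remaining singular fibres is also incomplete (seven fibres of type $III$, or two $\bar g$-orbits of seven fibres each of Euler number $1$, e.g.\ seven $I_1$'s together with seven $II$'s, are omitted). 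The shortcut you missed: the kernel of reduction of the N\'eron model is uniquely divisible by integers prime to $p$, so specialization is injective on $4$-torsion; the special fibre of the N\'eron model at a $II^*$ place is $\bbG_a$ with trivial component group, hence has no nontrivial $4$-torsion, whence $T=0$, $g^7|_{E_\eta}=\mathrm{id}$ and $g^7=1$, contradicting $\mathrm{ord}(g)=28$ with no Euler-characteristic bookkeeping at all. Repaired this way your argument is shorter than the paper's; as written, Step 3 does not stand.
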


\begin{proof}  Consider the order 7 automophism $$u:=g^{4}.$$  Suppose that $[u^*]=[1,\, 1.9,\, (\zeta_7:6).2]$ and
 ${\rm Fix}(u)$ is the support of a type $II^*$ fibre $F_0$ of a  fibration $\psi:X\to \bbP^1$. As in the proof of Lemma \ref{7q}, we infer that
 $\rank~\NS(X)^g= 10.$ By the faithfulness (Theorem \ref{faith}),
$$[g^*]=[1,\, 1.9,\, \zeta_{28}:12].$$
 Let
$$s:=g^{7}.$$  Then $s$ is a tame automorphism of order 4 with
$[s^*]=[1,\, 1.9,\, (\zeta_4:2).6].$ By Proposition \ref{trace},
$e(X^s)=12$. Thus $X^{s}$ is non-empty. The involution
$s^2=g^{14}$ has $[s^{2*}]=[g^{14*}]=[1,\, 1.9,\, -1.12]$, hence
$e(s^2)=0$.  Since ${\rm Fix}(s^2)$ contains   ${\rm
Fix}(s)$, it is non-empty. Let us determine ${\rm Fix}(s^2)$. Let
$R_1,\, R_2,\,\ldots, R_9$ be the components of $X^u$.

\bigskip
\bigskip
 $$
\begin{picture}(180,30)
\put(0,25){$R_8-R_7-R_6-R_5-R_4-R_3-R_2-R_1$}
 \put(60,15){\line(0,0){6}}
 \put(55,5){$R_9$}
\end{picture}
$$

\bigskip
\bigskip\noindent Note that $s^2$ acts on $X^u$ as an involution.
Any involution of ${\bf P}^1$ in char $\neq 2$ has exactly two fixed
points. The component $R_6$ is acted on by $s^2$ with 3 fixed
points, hence fixed  point-wisely by $s^2$. Note that  a
non-symplectic involution of a K3 surface has no isolated fixed
points. If $R_8$ is not fixed point-wisely by $s^2$, then a curve
$C'\subset{\rm Fix}(s^2)$ must pass through the intersection point
of $R_8$ and $R_7$, then locally at the intersection of the 3
curves $s^2$ preserves $R_8$ and $R_7$ and fixes $C'$ point-wisely,
which is impossible in any characteristic for a local tame
automorphism of a 2-dimensional space. Thus $R_8$ is fixed
point-wisely. Similarly, $R_4$ and $R_2$ are fixed point-wisely.
Then there is a curve $C\subset {\rm Fix}(s^2)$ intersecting $R_9$
and $R_1$ each with multiplicity 1. The curve $C$ is either
irreducible or a union of two curves respectively intersecting
$R_9$ and $R_1$. We claim that
$${\rm Fix}(s^2)=R_2\cup R_4\cup R_6 \cup R_8\cup C.$$
Indeed, if there is another component of $X^{s^2}$, then it does not meet $F_0$, hence must be contained in a fibre. But any fibre different from $F_0$ is irreducible (Proposition \ref{7}).  Computing $[g^{18*}]=[1,\,1.9,\,(\zeta_{14}:6).2]$ and applying  Deligne-Lusztig (Proposition \ref{DL}) to $g^{18}=s^2u$, we get
$$\Tr (u^*|H^*(X^{s^2}))=\Tr (g^{18*}|H^*(X))=14.$$
Assume that $C$ is irreducible. Then it is a curve $C_5$ of genus 5, since $e(s^2)=0$.
It is easy to see that
$\Tr (u^*|H^1_{\rm et}(C_5,{\bbQ}_l))=10$ or 3. Thus
$$\Tr (u^*|H^*(X^{s^2}))=0\,\, {\rm or}\,\, 7,$$ neither
compatible with the above computation. Thus $$C=C_0\cup C_6$$
where $C_i$ is a curve of genus $i$. In this case Deligne-Lusztig
does not work, as it may happen that $\Tr (u^*|H^1_{\rm
et}(C_6,{\bbQ}_l))=-2$. We employ a new argument. We infer that
$C_0$ meets $R_1$, hence is a section, and  $C_6$ meets $R_9$,
hence is a 3-section of $\psi:X\to {\bf P}^1$. It follows that $s^2$
acts on a general fibre of $\psi$ with 4 fixed points, the
intersection of $C$ and the fibre. Let $$Y\to X/\langle
s^2\rangle$$ be the minimal resolution. Then $Y$ is a rational
ruled surface, not relatively minimal. Note that $g(X^u)=X^u$ and
hence $g$ preserves the fibration $\psi:X\to {\bf P}^1$. Clearly
$g^{28}=1$ on the base ${\bf P}^1$. By Proposition \ref{7}, $g^4$
acts non-trivially on the base ${\bf P}^1$. Thus $s=g^{7}$  acts
trivially on the base ${\bf P}^1$ by Lemma \ref{P1}. On the other
hand, $s$ acts on ${\rm Fix}(s^2)$ and on $X^u$, hence on both
$C_0$ and $C_6$. Since $C_0$ is a section of $\psi$ and $C_6$ a
3-section, we see that $s$ acts trivially on both $C_0$ and $C_6$,
i.e., $$X^{s}\supset C_0\cup C_6.$$  Thus $s$ induces an
involution $\bar{s}$ on $Y$, which leaves invariant each fibre of
$Y$. Moreover, on a general fibre of $Y$ $\bar{s}$ has 4 fixed
points, but no involution of ${\bf P}^1$ can have 4 fixed points.
\end{proof}

\begin{lemma}\label{84} $n\neq 12$.
\end{lemma}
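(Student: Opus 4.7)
Plan: Suppose $\text{ord}(g)=84=7\cdot 12$. The wild element $u:=g^{12}$ has order $7$ and acts trivially on $H^0(X,\Omega^2_X)$, so the non-symplectic order of $g$ divides $12$ and must equal $12$, giving $m=7$, $n=12$. By Proposition \ref{7}(5), $[u^*]$ falls in one of two cases, and I would treat each in turn.

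\textit{First case.} $[u^*]=[1.10,(\zeta_7:6).2]$ and $X^u$ is the support of a type $II^*$ fibre. Since the $\widetilde E_8$ diagram has trivial graph automorphism (its three arms from the trivalent node have distinct lengths), $g$ preserves each of its nine components, so $\rank\NS(X)^g\ge 10$. Combined with $\rank\NS(X)^g\le\dim H^2_{\rm et}(X,\bbQ_l)^g\le\dim H^2_{\rm et}(X,\bbQ_l)^u=10$, the eigenvalue $1$ appears exactly $10$ times in $[g^*]$, leaving $12$ non-trivial slots whose eigenvalues have orders in $\{7,14,21,28,42,84\}$. Since $\phi(84)=24>12$ the value $84$ is forbidden, and the maximum LCM achievable from $\{7,14,21,28,42\}$ distributed over $12$ cyclotomic-orbit slots is $42$. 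This violates the faithfulness of $\Aut(X)\to\GL(H^2_{\rm et}(X,\bbQ_l))$ (Theorem \ref{faith}), which demands image of order $84$.

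\textit{Second case.} $[u^*]=[1.4,(\zeta_7:6).3]$ and $X^u$ is a point or the support of a type $III$ fibre $F_0=R_1+R_2$. The class $[F_0]$ is $g$-invariant (whether or not $g$ swaps $R_1,R_2$), so $g$ preserves the elliptic fibration $\psi:X\to\bbP^1$ whose general fibre lies in $|F_0|$, and induces $g|_{\bbP^1}\in\PGL_2(\bar k)$. When $u|_{\bbP^1}$ is the non-trivial order-$7$ unipotent element (Proposition \ref{7}(3) and (4)(a)), its centralizer in $\PGL_2(\bar k)$ is the additive translation subgroup, so $g|_{\bbP^1}$ has order $1$ or $7$; either way $g^7$ acts trivially on the base and hence preserves every fibre. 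On a generic smooth fibre $F$, $\Aut(F)\cong F\rtimes\Aut(F,0)$ has tame origin part of order at most $2$ for generic $j$, so the only way for $g^7|_F$ to attain order $12$ is as translation by a $12$-torsion section of the Mordell--Weil group. Such a fibrewise translation acts trivially on the relative dualizing sheaf $\omega_{X/\bbP^1}$, and since $g^7|_{\bbP^1}=1$ it acts trivially on $\psi^*\omega_{\bbP^1}$, hence trivially on $\omega_X=\omega_{X/\bbP^1}\otimes\psi^*\omega_{\bbP^1}$. Therefore $g^7$ is symplectic, contradicting that $g^7$ has purely non-symplectic order $12/\gcd(7,12)=12$.

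\textit{Main obstacle.} The remaining sub-case, Proposition \ref{7}(4)(b), where $u$ is translation by a $7$-torsion section and $\psi$ carries three $I_7$ fibres, is the hardest step: $g|_{\bbP^1}$ may have tame order up to $12$, and then $g^7$ no longer preserves the fibres, so the translation-symplectic argument breaks down. The plan is to combine (i) the fact that $g|_{\bbP^1}$ of order $12$ has exactly two fixed points on $\bbP^1$, forcing its orbit pattern on the three $I_7$ basepoints to be $1+2$ with one invariant $I_7$ fibre $F_{b_1}$, (ii) the relation $(g|_{F_{b_1}})^{12}=u|_{F_{b_1}}$ acting as a cyclic permutation of the seven components of $I_7$, and (iii) the Lefschetz trace formula (Proposition \ref{trace}) applied to the tame powers $g^{42},g^{21},g^{28},g^{14},g^7$, whose characters $[g^{k*}]$ are determined by the six a priori distributions of $[g^*]$ consistent with $[u^*]$ and faithfulness, to produce a numerical incompatibility in every case.
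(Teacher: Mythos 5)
Your first case ($F_0$ of type $II^*$) is correct: pinning the multiplicity of the eigenvalue $1$ to exactly $10$ and observing that the remaining twelve eigenvalues, of orders in $\{7,14,21,28,42\}$, cannot have lcm $84$ within twelve slots (by Lemma \ref{Weyl}, since $\phi(28)+\phi(21)=24>12$) is sound, and is close in spirit to the paper's Lemma \ref{7q}; the paper itself dispatches this case even faster by applying Lemma \ref{28} to $g^3$, which has order $28$. Your treatment of the sub-cases where $u$ acts non-trivially on the base (Proposition \ref{7}(3) with $F_0$ of type $III$, and (4)(a)) — centralizer of a unipotent in $\PGL_2$, hence $g^7$ trivial on ${\bf P}^1$, hence $g^7$ a fibrewise translation and therefore symplectic — is a genuinely different and attractive route not taken by the paper, but it has two blemishes. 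In case (4)(a), where $X^u$ is a single point, you assert $g$ preserves $\psi$ because ``$[F_0]$ is $g$-invariant''; there $\psi$ is not canonically attached to $u$, and a priori $g$ could carry it to a different $u$-invariant fibration, so this needs an argument. And the closing contradiction should not appeal to $g^7$ having ``purely non-symplectic order $12$'' — you never justified that the non-symplectic order of $g$ is $12$ rather than a proper divisor of $12$; the correct (and immediate) contradiction is that $g^7$ would be a \emph{tame symplectic} automorphism of order $12>8$, impossible by Proposition \ref{sym}.

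The genuine gap is the sub-case you yourself flag as the ``main obstacle'', Proposition \ref{7}(4)(b): you offer only a plan, and this is precisely where the paper does essentially all of its work. The paper first shows (using the two possible shapes of $[g^*]$ forced by Theorem \ref{faith} and $\phi(84)>18$) that ${\rm Fix}(u)$ must be a point, and then, for each of the two eigenvalue lists, analyzes ${\rm Fix}(g^{42})$: it rules out a free action of $g^{42}$ (no K3 surface admits a free action of order $4$), rules out unions of elliptic curves via Deligne--Lusztig applied to $g^{54}=g^{42}u$, and then combines trace computations with orbit counting for the $u$-action on the fixed rational curves and with the constraint $\dim H^2_{\rm et}(X,{\bbQ}_l)^{g^{6}}=2$ bounding $\rank\,\NS(X)^{g^6}$. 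Your proposed item (iii) — Lefschetz traces of the tame powers alone — is unlikely to close this case: the paper explicitly notes that in the analogous configurations Deligne--Lusztig ``does not work'' (e.g.\ when a high-genus fixed curve can have trace $-2$ on $H^1$), and the decisive steps are the geometric ones about invariant divisors and Picard ranks, which your plan does not include. Also, your item (i) is not forced: an element of $\PGL_2$ of order dividing $12$ acting on the three $I_7$ base points could act with orbit pattern $1+1+1$ or $3$ as well as $1+2$. As it stands the proof of Lemma \ref{84} is therefore incomplete.
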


\begin{proof} Suppose that $n=12$.
Define $$u:=g^{12}.$$ Then $u$
has order 7. Applying Lemma \ref{28} to $g^3$, we see that
$$[u^*]=[1,\, 1.3,\, (\zeta_7:6).3].$$
Since $\phi(84)>18$, $\zeta_{84}\notin [g^*]$. By Theorem
\ref{faith}, we infer that the list of eigenvalues of $[g^*]$ is
one of the following:
$$[g^*]=[1,\,\pm 1,\,\pm(\zeta_3:2),\, \zeta_{28}:12,\,\pm(\zeta_7:6)],$$
$$[g^*]=[1,\,\pm 1,\,\zeta_4:2,\, \pm(\zeta_{21}:12),\,\pm(\zeta_7:6)],$$
where $\pm(\zeta_{21}:12)$ means $\zeta_{21}:12$ or $\zeta_{42}:12$. In any case, we claim that
$${\rm Fix}(u)={\rm Fix}(g^{12})= \,\,\{{\rm a\,\,point}\}.$$
Indeed, if ${\rm Fix}(u)$ is the support of a fibre $F_0$ of type
$III$, then the two components of $F_0$ are interchanged or
preserved by $g$, so the eigenvalues $[1,\,1,\,\pm 1]$ should
appear in $[g^*]$, impossible.

\medskip Assume the first case $[g^*]=[1,\,\pm
1,\,\pm(\zeta_3:2),\, \zeta_{28}:12,\,\pm(\zeta_7:6)]$. The tame
involution $s:=g^{42}$ has
$$[s^*]=[g^{42*}]=[1,\,1,\,1.2,\, -1.12,\,1.6],$$
hence it is non-symplectic and by Proposition \ref{trace} $e(g^{42})=0$. If  $g^{42}$ acts freely on $X$, then so does $g^{21}$.
But no K3 surface admits an order 4 free action. Thus ${\rm Fix}(g^{42})$ is either a union of elliptic curves or a union of a curve of genus $d+1\ge 2$ and $d$ smooth rational curves.
Applying Deligne-Lusztig (Proposition \ref{DL}) to $g^{54}=su$,
we get
$$\Tr (u^*|H^*(X^{s}))=\Tr (g^{54*}|H^*(X))=7.$$
Any order 7 or trivial action on an elliptic curve $E$ has
$$\Tr (u^*|H^*(E))=1-2+1=0.$$ If 7  elliptic curves form an orbit under $u$, then the trace on the $j$-th cohomology of the union of the 7 curves is 0.
This rules out the first possibility for ${\rm Fix}(g^{42})$. Thus ${\rm Fix}(g^{42})$ is a union of a curve $C_{d+1}$ of genus $d+1\ge 2$ and $d$ smooth rational curves.
Since fixed curves give linearly independent invariant vectors in $H^2_{\rm et}(X,{\bbQ}_l)$ and $\dim H^2_{\rm et}(X,{\bbQ}_l)^s=10$,
$$1\le d\le 9.$$
Consider the action of $u=g^{12}$ on  ${\rm Fix}(g^{42})$. It is
of order 7 or trivial. Since ${\rm Fix}(g^{12})$ is a point, the
action of $u$ on  ${\rm Fix}(g^{42})$ has at most one fixed point
and is of order 7. This is possible only if
 $d=7r$, $7r+1$,  and the $7r$ smooth rational curves form $r$ orbits under  $u$.
If $d=7$ and $u$ acts on $C_8$, then, since $\Tr
(u^{*}|H^1_{\rm et}(C_8,{\bbQ}_l))=16-7b$, $0\le b\le 2$, we see
that
$$\Tr (u^*|H^*(X^{s}))=\Tr (u^*|H^*(C_8))=2-(16-7b)=7b-14\neq 7.$$
If $d=7r+1$, then $\Tr (u^*|H^1(C_{d+1}))=2(7r+2)-7b$, $0\le b\le 2r$, and
$$\Tr (u^*|H^*(X^{s}))=2+ \Tr (u^*|H^*(C_{d+1}))=7b-14r\neq 7.$$

\medskip Assume the second case $[g^*]=[1,\,\pm 1,\,\zeta_4:2,\,
\pm(\zeta_{21}:12),\,\pm(\zeta_7:6)]$. The tame involution
$s:=g^{42}$ has
$$[s^*]=[g^{42*}]=[1,\,1,\,-1.2,\, 1.12,\,1.6],$$
hence it is non-symplectic and by Proposition \ref{trace} $e(g^{42})=20$. Thus ${\rm Fix}(g^{42})$ is either a union of
$10$ smooth rational curves and possibly some elliptic curves or a
union of a curve $C_{d-9}$ of genus $d-9\ge 2$ and $d$ smooth rational
curves. In the first case, the order 7 action of $g^{6}$ on ${\rm
Fix}(g^{42})$ preserves at least 3 smooth rational curves, hence
fixes at least 3 points, a contradiction. Thus we have the second
case. Since fixed curves give linearly independent invariant
vectors in $H^2_{\rm et}(X,{\bbQ}_l)$ and $\dim H^2_{\rm
et}(X,{\bbQ}_l)^s=20$,
$$11\le d\le 19.$$ Since ${\rm
Fix}(g^{6})\subset {\rm Fix}(g^{12})$, the action of $g^{6}$ on
${\rm Fix}(g^{42})$ has at most one fixed point and is of order 7.
This is possible only if $d=7r$ or $7r+1$ and the $7r$ smooth rational curves form $r$ orbits under  $g^{6}$.
In any case, $r=2$ and each orbit (and $C_{d-9}$ also) gives a $g^{6}$-invariant
divisor, hence $$\rank~\NS(X)^{g^{6}}\ge 3.$$ But $\dim H^2_{\rm
et}(X,{\bbQ}_l)^{g^{6}}=2$.
\end{proof}

\begin{example}\label{exam7} In char $p = 7$, there are K3 surfaces with an automorphism of order 42 or 28.

\medskip
(1) $X_{42}: y^2=x^3+t^{7}-t$, $g_{42}(t,x,y)=(t+1,\zeta_3 x, -y)$;

\medskip
(2) $X_{28}: y^2=x^3+(t^{7}-t)x$, $g_{28}(t,x,y)=(t+1, -x, \zeta_4 y)$.

\medskip\noindent
The surface $X_{42}$ has a
 $II^*$-fibre at $t=\infty$ and 7 $II$-fibres at $t^7-t=0$; $X_{28}$ has 8
 $III$-fibres at $t=\infty$, $t^7-t=0$
 (\cite{DK1}, 5.8).
\end{example}

\section{The case: $p=5$}

In this section we determine the orders of wild automorphisms in
characteristic $p=5$. We first improve the previous results from
\cite{DK1} and \cite{DK2}.

\begin{proposition}\label{5} Let $u$ be an automorphism of order $5$ of a K3 surface $X$ over an algebraically closed
field of characteristic $p=5$. Then one of the following occurs:
\begin{enumerate}
\item the fixed locus $X^u$ contains a curve of arithmetic genus
$2$;
\item $X^u$  is the
support of a fibre $F_0$ of a fibration $\psi: X\to {\bf P}^1$ of
curves of arithmetic genus $1$, with or without a section, $F_0$
is of type $IV$ or $III^*$, and $u^*|H^2_{\rm
  et}(X,\bbQ_l)$, $l\ne 5$, has eigenvalues
$$\qquad\quad [u^*]=[1,\, 1.5,\, (\zeta_5:4).4]\,\,{\rm or}\,\, [1,\, 1.9,\,
    (\zeta_5:4).3]$$
respectively if $F_0$ is of type $IV$ or $III^*$;
\item  $X^u$ consists of two points, $X/\langle u\rangle$ is a K3 surface with two rational double points of type $E_8$, and
 $u^*|H^2_{\rm et}(X,\bbQ_l)$ has eigenvalues
  $$[u^*]=[1,\, 1.5,\, (\zeta_5:4).4];$$
\item  $X^u$ consists of a point, $X/\langle u\rangle$ is a K3 surface with one rational double point of type $E_8$, $X$ does not admit an $u$-invariant elliptic fibration, and
 $u^*|H^2_{\rm
  et}(X,\bbQ_l)$ has eigenvalues
  $$[u^*]=[1,\, 1.13,\, (\zeta_5:4).2];$$
\item  $X^u$ consists of a point, $X/\langle u\rangle$ is a rational surface, $X$ admits an $u$-invariant elliptic fibration with a fibre $F_0$ of type $IV$ whose singular point is $X^u$, and
 $u^*|H^2_{\rm
  et}(X,\bbQ_l)$ has eigenvalues
  $$[u^*]=[1,\, 1.5,\, (\zeta_5:4).4]$$
\end{enumerate}
      where the first eigenvalue corresponds to an u-invariant ample divisor.
\end{proposition}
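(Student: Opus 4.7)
The plan is to extend the case-analysis used for Propositions \ref{11} and \ref{7} in characteristics $11$ and $7$. The input from \cite{DK1, DK2} will be the classification of possible fixed-locus structures and quotient surfaces for an order-$p$ automorphism, and the new contribution will be pinning down the eigenvalue list on $H^2_{\et}$ via the dimension identity of Proposition \ref{diminv} combined with Proposition 5 of \cite{Shioda2}.

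First, I would constrain the eigenvalues abstractly. Since $u$ has order $5$, every eigenvalue of $u^*$ on $H^2_{\et}(X, \bbQ_l)$ is a $5$th root of unity; by Proposition \ref{integral}(1) and Lemma \ref{Weyl}, the primitive $5$th roots occur in full Galois orbits of size $\phi(5) = 4$; and by Proposition \ref{integral}(2), the eigenvalue $1$ appears with multiplicity at least $1$. Hence
\[
[u^*] = [1.a,\, (\zeta_5:4).b] \quad \text{with } a + 4b = 22 \text{ and } a \geq 1,
\]
so the pair $(a,b)$ lies in $\{(2,5),\,(6,4),\,(10,3),\,(14,2),\,(18,1)\}$. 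It remains to determine which value of $a$ occurs in each geometric configuration.

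Second, I would recall from \cite{DK1, DK2} that in characteristic $5$ the quotient $X/\langle u \rangle$ is either a rational surface (possibly with an elliptic Gorenstein singularity) or a K3 surface with rational double points; the Enriques option is excluded because $p \neq 2$. The prior classification of $X^u$ then gives exactly three geometric configurations: (a) $X^u$ is one-dimensional and contains a curve of arithmetic genus $2$; (b) $X^u$ is the support of a single fibre $F_0$ of a $u$-invariant genus-$1$ fibration $\psi: X \to \bbP^1$, and the requirement that an order-$5$ element fix $F_0$ pointwise rules out every Kodaira fibre type except $IV$ and $III^*$; or (c) $X^u$ is finite of cardinality $1$ or $2$. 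The subcases (3)-(5) are separated by whether the quotient singularities are two $E_8$'s, one $E_8$, or an elliptic Gorenstein singularity, and by the presence or absence of a $u$-invariant elliptic fibration on $X$. In each case the multiplicity $a$ is then computed by Proposition \ref{diminv}: when the quotient is rational (cases 1, 2, 5), part (4) gives $a = \rank \NS(X)^u$, and explicit counting of invariant classes (components of $F_0$, the fibre class of $\psi$, a multisection, and an invariant ample class) yields $a = 6$ for type $IV$ in case (2) and $a = 10$ for type $III^*$; when the quotient is a K3 with $k$ rational double points of type $E_8$ (cases 3 and 4), Proposition 5 of \cite{Shioda2} gives $T_l^2(X)^u \cong T_l^2(Y)$ for the minimal resolution $Y$, and since resolving each $E_8$ adds $8$ to the Picard number of the K3 $Y$, Proposition \ref{diminv}(3) gives $a = 22 - 8k$, so $a = 6$ for $k = 2$ and $a = 14$ for $k = 1$.

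The main obstacle will be separating the three finite-fixed-locus cases (3), (4), (5). Distinguishing them requires a local analysis of the wild, unipotent tangent representation of $u$ at each isolated fixed point to identify the type of induced quotient singularity (an $E_8$ rational double point versus a Gorenstein elliptic one), followed by an intersection-theoretic argument on the minimal resolution of $X/\langle u \rangle$ to show that $X$ admits a $u$-invariant genus-$1$ fibration exactly in case (5), with a type-$IV$ fibre through the fixed point. A secondary difficulty in case (2) is the ruling out of the remaining Kodaira fibre types: one must verify, using the fact that an order-$5$ automorphism of $\bbP^1$ in characteristic $5$ is unipotent with a single fixed point, that no fibre of type $I_n$, $I_n^*$, $II$, $III$, $II^*$, or $IV^*$ can be fixed pointwise by an order-$5$ element arising from an automorphism of the ambient K3 surface.
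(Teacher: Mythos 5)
Your skeleton is right --- the eigenvalue list must have the form $[1.a,\,(\zeta_5:4).b]$ with $a+4b=22$, the multiplicity $a$ is computed via Proposition \ref{diminv} together with Proposition 5 of \cite{Shioda2}, and the cases are organized by the structure of $X^u$ and of the quotient. But the three steps you flag as ``obstacles'' are exactly where the content of the proof lies, and in each case you either leave them unresolved or substitute an assertion that does not work. First, in case (2) you claim that the requirement that an order-$5$ element fix $F_0$ pointwise ``rules out every Kodaira fibre type except $IV$ and $III^*$.'' There is no such local obstruction: a wild automorphism in characteristic $5$ can fix a curve pointwise, and nothing you say excludes $I_4$, $I_9$, $I_3^*$, $I_{14}$ or $I_8^*$. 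The paper's mechanism is different: since the singular fibres away from $F_0$ form free $u$-orbits, $e(F_0)=24-5r$ where $r$ is the total Euler number of the remaining fibres of the (resolved, relatively minimal) rational elliptic quotient; this forces $e(F_0)\in\{4,9,14\}$ (after noting $r\le 1$ would give an impossibly large fibre on a rational elliptic surface), and the surviving multiplicative and $I_m^*$ types are then killed by a monodromy argument via Tate's analytic uniformization (a fibre of type $I_{5n}$ or $I_{5m}^*$ would be forced, which is too big). The same Euler-number relation is what pins down $a=6$ versus $a=10$, via $\rank\,\NS(\overline{Y})=10$ for the relatively minimal rational elliptic surface; your ``explicit counting of invariant classes'' is not enough to get these numbers.

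Second, for cases (3) and (4) your computation $a=22-8k$ presupposes that each isolated-fixed-point singularity of the K3 quotient is an $E_8$ rational double point; you correctly identify this as the main difficulty but give no argument. The paper obtains it from Artin's classification of wild $\bbZ/p$-coverings of rational double points in characteristic $p$ (together with Theorem 1 of \cite{DK1}, which says the quotient is a K3 with rational double points in these cases). Third, the separation of (4) from (5) --- that for a one-point fixed locus the quotient is a K3 precisely when $X$ has no $u$-invariant genus-one fibration --- is not an ``intersection-theoretic argument on the minimal resolution'' you still need to find: the paper settles one direction by Proposition 2.9 of \cite{DK2} (no invariant fibration forces the resolution to be a K3, not rational) and the other by an Euler-number contradiction (if an invariant fibration exists, $F_0$ must be of type $IV$ with $r=4$, and a K3 quotient would force $e(Y_0)<e(Y)-4$, which is impossible, so the quotient is rational). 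Without these three inputs the proposal does not close.
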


\begin{proof} By \cite{DK1}, the fixed locus $X^u$ consists of a point, two points or a connected curve of Kodaira dimension $\kappa(X, X^u)=0,1,2$. By Proposition 2.5 \cite{DK2},
the case with $\kappa(X, X^u)=0$ occurs only in characteristic 2,
so does not occur in characteristic 5.

\medskip Assume that $X^u$ is a connected curve of Kodaira
dimension $\kappa(X, X^u)=2$. Then by Proposition 2.3 \cite{DK2},
$X^u$ contains a curve of arithmetic genus 2. This gives the case
(1).

\medskip Assume that $X^u$ is a connected curve of Kodaira
dimension $\kappa(X, X^u)=1$. Then by Theorem 3 \cite{DK1}, $X^u$
is the support of a fibre $F_0$ of a fibration $$\psi: X\to
{\bf P}^1$$ of curves of arithmetic genus $1$, with or without a
section, the induced action of $u$ on the base ${\bf P}^1$ is of
order 5 and the quotient $X/\langle u\rangle$ is a (singular)
rational elliptic surface.  Let
$$Y\to X/\langle u\rangle$$ be a minimal resolution. Then $Y$ is a rational elliptic
surface, not necessarily relatively minimal.  Let $\overline{Y}$
be the relatively minimal rational elliptic surface. Denote by
$Y_0\subset Y$ and $\overline{Y_0}\subset\overline{Y}$ be the
fibres coming from $F_0$. Since the singular fibres of $\psi$ away
from $F_0$ form orbits under $u$, we have
$$e(F_0)=24-5r$$ where $r$ is the sum of the Euler numbers of singular
fibres of $Y$ away from $Y_0$.  If $r\le 1$, then $e(Y_0)\ge
e(Y)-1$, but no rational elliptic surface may have a fibre with
that big Euler number. Thus $r\ge 2.$

\medskip Case: $r=4$. Then $e(F_0)=24-5r=4$ and
$e(\overline{Y_0})=e(\overline{Y})-r=8$. Since $F_0$ is of type
$I_4$ or $IV$, we infer that
$$10=\rank~\NS(\overline{Y})=4+\rank~\NS(X/\langle u\rangle).$$ Then by
Proposition \ref{diminv}, $\dim H^2_{\rm
et}(Y,{\bbQ}_l)^u=\rank~\NS(X/\langle u\rangle)=6.$ Thus
$[u^*]=[1,\, 1.5,\, (\zeta_5:4).4]$. A monodromy argument excludes
the possibility $I_4$; if $F_0$ is of type $I_n$ or $I_m^*$, i.e.,
has a stable reduction of multiplicative type, then one can use
Tate's analytic uniformization to compute the representation of
the Galois group of the generic point of the strict local scheme
on the group of $l^t$-torsion points (the $l$-adic
representation), which leads to the monodromy computation. A fibre
of type $I_{5n}$ or $I_{5m}^*$ would be too big for a rational
elliptic surface if $n>1$ or $m>0$.

\medskip Case: $r=3$. Then $e(F_0)=24-5r=9$ and
$e(\overline{Y_0})=e(\overline{Y})-r=9$. Since $F_0$ is of type
$I_9$, $I_3^*$, or $III^*$, we infer that
$$10=\rank~\NS(\overline{Y})=\rank~\NS(X/\langle u\rangle).$$ Then by
Proposition \ref{diminv}, $\dim H^2_{\rm
et}(Y,{\bbQ}_l)^u=\rank~\NS(X/\langle u\rangle)=10.$ Thus
$[u^*]=[1,\, 1.9,\, (\zeta_5:4).3]$. A monodromy argument excludes
the possibilities $I_9$ and $I_3^*$.

\medskip Case: $r=2$. Then $e(F_0)=24-5r=14$ and
$e(\overline{Y_0})=e(\overline{Y})-r=10$. Thus $F_0$ is of type
$I_{14}$ or $I_{8}^*$, both are excluded by a monodromy argument.

\medskip Assume that $X^u$ consists of two points. Then by Theorem
1 \cite{DK1}, the quotient $X/\langle u\rangle$ has two rational
double points and the minimal resolution
$$Y\to X/\langle u\rangle$$ is a K3 surface. From a result of
Artin \cite{Artin} we see that each singularity of $X/\langle
u\rangle$ must be of type $E_8$, thus
$$\rank~\NS(Y)=16+\rank~\NS(X/\langle u\rangle)=16+\rank~\NS(X)^u.$$ Now by
  Proposition \ref{diminv}
$$22=\dim H^2_{\rm et}(Y,{\bbQ}_l)=\rank~\NS(Y) + \dim T_l^2(X)^u=16+H^2_{\rm et}(X,\bbQ_l)^u,$$
hence $H^2_{\rm et}(X,\bbQ_l)^u=6$. This gives the case (3).

\medskip Assume that $X^u$ consists of a point. Then by Theorem 1
\cite{DK1}, the minimal resolution $Y$ of $X/\langle u\rangle$ is
either a K3 surface or a rational surface.  Suppose that $X$
admits an $u$-invariant fibration of curves of arithmetic genus
$1$. Let $F_0$ be the fibre containing the point $X^u$. Since the
order 5 action of $u$ on $F_0$ has only one fixed point, we see
that $F_0$ is of type $II$, $III$ or $IV$. On the other hand,
$e(F_0)=24-5r$ which holds true even when $u$ acts on the base
trivially, where $r$ is the sum of the Euler numbers of singular
fibres of $Y$ away from the singular fibre coming from $F_0$. Thus
$F_0$ is of type $IV$ and $r=4$. If $X/\langle u\rangle$ is not rational, then the minimal resolution $Y$ is a K3 surface and the fibre $Y_0$ corresponding to $F_0$ consists of
the proper image of the 3 components of $F_0$ and the 8 components lying over the $E_8$-singularity, hence $e(Y_0)<20=e(Y)-4$, a contradiction. Thus $X/\langle u\rangle$ is rational,
yielding the case (5). Suppose that $X$ does not admit an
$u$-invariant elliptic fibration. Then by Proposition 2.9
\cite{DK2}, $Y$ cannot be a rational surface. Thus $Y$ is a K3
surface and we have the case (4).
\end{proof}

\begin{remark} The case (3) is also supported by examples. In
fact, there are 2 dimensional family of elliptic K3 surfaces with
a 5-torsion and with 2 fibres of type $II$ and 4 fibres type $I_5$
(\cite{ItoL} Theorem 4.4). The automorphism induced by a 5-torsion
has 2 fixed points, the cusps of the two type $II$ fibres.
\end{remark}

The following is the main result of this section.

\begin{theorem} \label{5main} Let $g$ be an automorphism of finite order of a K3 surface $X$ over an algebraically closed
field of characteristic $p=5$. Assume that the order of $g$ is
divisible by $5$. Then $${\rm ord}(g) = 5n, \,\,\,{\rm
where}\,\,\,n=1,\,2,\,3,\,4,\,6, \, 8.$$ There are examples supporting
these orders $($see Examples \ref{exam5} and \ref{exam5-2}$)$.
\end{theorem}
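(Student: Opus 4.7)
The plan mirrors the proofs of Theorems \ref{11main} and \ref{7main}: combine the faithfulness of Theorem \ref{faith} with the structural Proposition \ref{5} to bound the admissible orders, and then realize the survivors via Examples \ref{exam5} and \ref{exam5-2}. I split the argument into three steps, the first two parallel to the earlier sections and the third being the main obstacle.

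\textbf{Step 1: $25 \nmid {\rm ord}(g)$.} Suppose $g$ has order $25$. By Theorem \ref{faith} and Lemma \ref{Weyl}, $[g^*] = [1.a,\, (\zeta_5:4).b,\, (\zeta_{25}:20).c]$ with $a + 4b + 20c = 22$. Raising to the fifth power annihilates $\zeta_5:4$ and sends the $20c$ primitive $25$th roots $5$-to-$1$ onto the four primitive $5$th roots, so $[g^{5*}] = [1.(a+4b),\, (\zeta_5:4).5c]$. In the three explicit patterns of Proposition \ref{5} (cases $(2)$--$(5)$) the multiplicity of each primitive $5$th root is $4$, $3$, or $2$, none divisible by $5$ --- a contradiction. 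Case $(1)$, where $X^u$ contains an arithmetic genus $2$ curve $C$, is handled separately: since $u = g^5$ fixes $C$ pointwise, the induced action of $g$ on $C$ has order dividing $5$, and a local analysis either of $g|_C$ as a wild order-$5$ automorphism of a genus $2$ curve (if nontrivial) or of the linearised action of $g$ on the normal bundle of $C$ (if $g|_C = \mathrm{id}$) rules out $g$ of order $25$. Hence ${\rm ord}(g) = 5n$ with $\gcd(n,5) = 1$.

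\textbf{Step 2: Bound $n$.} With $u := g^n$ of order $5$, every eigenvalue of $g^*$ has order dividing $n$ or of the form $5d'$ for some $d' \mid n$; the former contributes $1$ to $[u^*]$ under the $n$th power, while the latter contributes a primitive $5$th root with multiplicity $\phi(d')$ per copy. Proposition \ref{5} forces the multiplicity of each primitive $5$th root in $[u^*]$ to lie in $\{2, 3, 4\}$, bounding the total number of slots of $[g^*]$ occupied by $\zeta_{5d'}$ eigenvalues, while the $1$'s in $[u^*]$ are capped at $14$. Faithfulness of $g^*$ (of order $5n$) requires the ${\rm lcm}$ of the eigenvalue orders to equal $5n$; when $\phi(5n) > 21$ the primitive $5n$th root itself cannot fit, and one must reach order $5n$ via combinations of lower-order eigenvalues. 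Accounting for slot costs case by case, together with the rank inequality $\rank \NS(X)^g \ge 10$ when $X^u$ supports a $III^*$ fibre (as in Lemma \ref{7q}), eliminates every $n$ outside $\{1, 2, 3, 4, 6, 8, 12\}$.

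\textbf{Step 3: Eliminate $n = 12$.} This is the main obstacle. Using the Jordan decomposition $g = su$ with $s := g^{25}$ tame of order $12$ and $u := g^{36}$ wild of order $5$, the Deligne--Lusztig formula (Proposition \ref{DL}) gives
\[
\Tr(g^*|H^*(X)) = \Tr(u^*|H^*(X^s)).
\]
The left-hand side is determined by the few shapes of $[g^*]$ surviving Step 2: $[g^{12*}]$ must match one of the three explicit patterns of Proposition \ref{5}, while $[g^{5*}] = [s^*]$ is the eigenvalue list of a tame order-$12$ automorphism, constrained by Lemmas \ref{Lefschetz} and \ref{nsym4} applied to the intermediate powers $g^{10}, g^{15}, g^{20}, g^{30}$. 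The right-hand side is computed from the topology of the smooth fixed locus $X^s$ --- whose Euler characteristic equals $\Tr(s^*|H^*(X))$ by Proposition \ref{trace} --- together with the order-$5$ action of $u$ on each component. The hard part will be controlling $\Tr(u^*|H^1(C))$ for a positive-genus fixed curve $C \subset X^s$, where several integer values are a priori possible; as in Lemmas \ref{44} and \ref{84}, I expect to need a supplementary argument employing the $u$-invariant elliptic fibration supplied by Proposition \ref{5} and a rank analysis of $\NS(X)^{g^k}$ for appropriate intermediate $k$. Running this comparison across the three shapes of $[u^*]$ eliminates $n = 12$, and Examples \ref{exam5} and \ref{exam5-2} realize the remaining $n \in \{1, 2, 3, 4, 6, 8\}$.
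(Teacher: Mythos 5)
Your outline misses the structural backbone of the actual argument and, more importantly, contains a step that does not work as claimed. The paper's proof is organized by the five possibilities for the fixed locus $X^u$ of the order-$5$ part $u=g^n$ given in Proposition \ref{5}, and the crucial case is case (1), where $X^u$ contains a curve of arithmetic genus $2$. In that case Proposition \ref{5} supplies \emph{no} eigenvalue pattern for $[u^*]$, so your Step 2 premise that ``Proposition \ref{5} forces the multiplicity of each primitive $5$th root in $[u^*]$ to lie in $\{2,3,4\}$'' simply does not apply there. Yet this is exactly the case in which $n=8$ (order $40$) occurs: the bound $n\le 8$, $n\neq 5,7$ is obtained (Lemma \ref{kod2}) not by eigenvalue counting but by writing $X$ as the double plane $z^2=(y^5-yx^4)P_1(x_0,x)+P_6(x_0,x)$, observing that $g$ induces a linear automorphism $\overline{g}$ of ${\bf P}^1\times{\bf P}^1$-free ${\bf P}^2$ preserving the branch data, and analyzing the possible pairs $(P_1,P_6)$ for each putative order of $\overline{g}$; non-rational double points kill $n\ge 7$. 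Your proposed substitute (a local analysis of $g|_C$ or of the normal bundle of $C$) is not developed and there is no indication it would yield the bound $n\le 8$. The same double-plane argument (``${\bf P}^2$ has no automorphism of order $p^2$'') is also what disposes of order $25$ in case (1), which your Step 1 leaves to an unspecified local analysis.

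The second gap is in Step 2: slot counting plus faithfulness does \emph{not} eliminate every $n$ outside $\{1,2,3,4,6,8,12\}$ in the remaining cases. In case (4) of Proposition \ref{5}, where $[u^*]=[1,\,1.13,\,(\zeta_5:4).2]$, the lists $[1,\,\zeta_7:6,\,1.7,\,(\zeta_5:4).2]$, $[1,\,\zeta_{11}:10,\,1.3,\,(\zeta_5:4).2]$ and the analogous one for $n=13$ all fit comfortably in $22$ slots and are consistent with faithfulness. Ruling them out requires the hyperbolic-lattice bound $\rank\,{\rm NS}(X)^u\le 4$ (Lemma \ref{1ptK3}), rationality of intermediate quotients via Proposition \ref{diminv}, Deligne--Lusztig traces on ${\rm Fix}(g^5)$, and --- for $n=11$ --- a full quotient-singularity computation of $K_Y^2$ on the resolution of $X/\langle g^5\rangle$ leading to a Diophantine contradiction. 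Likewise $n=8$ must be excluded separately in each of cases (2)--(5) by arguments about involutions fixing curves through the singular point of a type $IV$ fibre, orbit counts on fixed rational curves, and so on; none of this is ``slot cost'' accounting. Your Step 3 ($n=12$) is closest in spirit to what the paper does, but even there the elimination proceeds case-by-case over the fixed-locus types with six subcases for $[g^*]$ in the hardest one, and in case (1) it is Lemma \ref{kod2}, not Deligne--Lusztig, that excludes $n=12$. As written, the proposal would not close.
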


The proof follows from the following lemmas \ref{kod2}, \ref{25} -- \ref{1pt}.

\begin{lemma}\label{kod2} Let $g$ be an automorphism of order
$5n$. Let $u=g^n$. If the fixed locus $X^u$ contains a curve of
arithmetic genus $2$, then $n\le 8$, $n\neq 5,\, 7$.
\end{lemma}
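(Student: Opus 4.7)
The plan is to restrict $g$ to the distinguished curve $C_2\subset X^u$ of arithmetic genus $2$, show that the induced automorphism of $C_2$ has order exactly $n$, and then apply the classification of tame automorphism orders of a smooth genus-$2$ curve in characteristic $5$.

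First, I reduce to $\gcd(5,n)=1$: if $5\mid n$, then $\langle g\rangle$ would contain an element of order $25$, which is excluded on K3 surfaces in characteristic $5$ (all wild automorphisms have order exactly $p=5$; cf.\ Theorem~2.1 of \cite{DK2}). We then decompose $g=s\cdot u'$ with $s$ tame of order $n$, $u'$ wild of order $5$, and $\langle u'\rangle=\langle u\rangle$. Since $g$ commutes with $u$, it preserves $X^u$ and its distinguished genus-$2$ component $C_2$ (cf.\ Proposition~2.3 of \cite{DK2}), so $g|_{C_2}=s|_{C_2}$ is a tame automorphism of $C_2$ of some order $d\mid n$. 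I claim $d=n$: otherwise $s^d\neq 1_X$ is a nontrivial tame automorphism of $X$ fixing $C_2$ pointwise; but the nowhere vanishing generator $\omega\in H^0(X,\Omega^2_X)$ restricts to a nowhere vanishing section $\omega|_{C_2}$ of $\Omega^2_X|_{C_2}$ fixed by $s^d$, giving $s^{d*}\omega=\omega$, so $s^d$ is symplectic; by Proposition~\ref{sym} a nontrivial tame symplectic automorphism of a K3 surface has only isolated fixed points, contradicting $\mathrm{Fix}(s^d)\supseteq C_2$.

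The smooth genus-$2$ curve $C_2$ thus admits a tame automorphism of order $n$. Being hyperelliptic, the automorphism group of $C_2$ is an extension of a subgroup of $\PGL_2(k)$ (preserving the six Weierstrass points) by the hyperelliptic involution. A direct orbit analysis of cyclic subgroups of $\PGL_2(k)$ acting on $\bbP^1$ shows that no cyclic group of order $7$, $9$, $11$, or $12$ can preserve a six-point set (every nontrivial orbit is too large), whereas cyclic groups of orders $2$, $3$, $4$, $5$, $6$, $8$, $10$ can be realized; this recovers the classical Igusa list of tame automorphism orders of a smooth genus-$2$ curve, namely $\{1,2,3,4,5,6,8,10\}$. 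In characteristic $5$ the orders $5$ and $10$ drop out because the realizing families (curves of the form $y^2=x^5+a$) become singular, leaving $n\in\{1,2,3,4,6,8\}$, equivalently $n\le 8$ and $n\neq 5,7$.

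The main obstacle is the orbit-theoretic verification in the preceding paragraph that every cyclic subgroup of $\PGL_2(k)$ of order $\geq 7$ (other than $10$) forces all nontrivial orbits to have size $\geq 7$ and hence cannot preserve a six-point set; a secondary (and well-known) ingredient is the reduction $5\nmid n$, which relies on the nonexistence of wild automorphisms of order $>p$ on K3 surfaces in characteristic $p=5$.
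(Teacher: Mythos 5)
Your approach (restrict $g$ to the genus-$2$ curve and invoke the classification of automorphisms of smooth genus-$2$ curves) is genuinely different from the paper's, but it has several gaps that I do not see how to repair. The most serious one: the curve of arithmetic genus $2$ in $X^u$ is in general \emph{not smooth}. It is pointwise fixed by the wild automorphism $u$, and in the model of Proposition~2.3 of \cite{DK2} used by the paper, $X$ is birational to $z^2=(y^5-yx^4)P_1(x_0,x)+P_6(x_0,x)$ with $\overline{u}(x_0,x,y)=(x_0,x,x+y)$; the genus-$2$ curve is the preimage of the pointwise-fixed line $x=0$, i.e.\ (affinely) $z^2=cy^5+d$, which in characteristic $5$ is a rational curve with a cusp of $\delta$-invariant $2$. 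The Igusa list for smooth genus-$2$ curves therefore does not apply, and the automorphism group of the normalization ${\bf P}^1$ preserving the cusp is an affine group containing cyclic subgroups of every tame order, so no bound on $n$ follows from the curve alone. The paper instead works with the induced \emph{linear} automorphism $\overline{g}$ of ${\bf P}^2$, puts it in Jordan form compatibly with $\overline{g}^n=\overline{u}$ (distinguishing whether $g^{5n/2}$ is the covering involution), and checks for each $n\ge 7$ that any invariant pair $(P_1,P_6)$ forces a non-rational double point on the double plane; this is where the bound really comes from.

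Two further gaps. First, your faithfulness claim $d=n$ is unjustified: if $s^d$ fixes $C_2$ pointwise, the action of $s^{d*}$ on the fibre of $\Omega^2_X$ at a fixed point of the curve is multiplication by the inverse of the normal eigenvalue, not by $1$, so you cannot conclude $s^{d*}\omega=\omega$; indeed tame \emph{non-symplectic} automorphisms of K3 surfaces routinely fix curves pointwise (every non-symplectic involution with nonempty fixed curve does), so there is no contradiction with Proposition~\ref{sym} and the restriction $g|_{C_2}$ could a priori have order a proper divisor of $n$. Second, your reduction to $5\nmid n$ is circular: the nonexistence of automorphisms of order $25$ is Lemma~\ref{25} of the paper, whose proof uses Lemma~\ref{kod2}; Theorem~2.1 of \cite{DK2} bounds the characteristics in which order-$p$ automorphisms exist but does not exclude order $p^2$. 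The paper obtains $5\nmid n$ inside the geometric argument, from the fact that $g$ induces a linear automorphism of ${\bf P}^2$ and ${\bf P}^2$ admits no automorphism of order $p^2$ in characteristic $p$. (A minor further inaccuracy: smooth genus-$2$ curves in characteristic $5$ do admit automorphisms of orders $5$ and $10$, e.g.\ $y^2=x^5-x$ with $(x,y)\mapsto(x+1,-y)$; these orders do not ``drop out,'' they are simply irrelevant because the restricted automorphism is tame.)
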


\begin{proof} Assume that $X^u$ contains a curve $C$ of arithmetic genus 2. Then by \cite{DK2} Proposition
2.3, the linear system $|C|$ defines a double cover $X\to {\bf P}^2$
and $X$ is birationally isomorphic to the surface
 \begin{equation}\label{char5}
z^2=(y^5-yx^4)P_1(x_0,x)+ P_6(x_0,x)
 \end{equation}
where $(x_0: x: y)$ is the homogeneous coordinates of ${\bf P}^2$,
$P_i(x_0,x)$ is a homogeneous polynomial of degree $i$, and the
induced automorphism $\overline{u}$ of ${\bf P}^2$ is given by
$$\overline{u}(x_0,x,y)=(x_0,x,x+y).$$ Since $g$
preserves $|C|$, it induces a linear automorphism $\overline{g}$
of ${\bf P}^2$. It follows that $n$ is not a multiple of $5,$ since ${\bf P}^2$  in characteristic $p$ cannot
admit an automorphism of order $p^2$. Note
that $g$ is induced from $\overline{g}$, possibly composed with
the covering involution. Since $\overline{g}^n=\overline{u}$, either $\overline{g}$ has order $5n$ or $\frac{5}{2}n$. The latter happens if and only if
$n$ is even and $g^{\frac{5n}{2}}$ is the covering involution. Since $g$ preserves the equation
(\ref{char5}), $\overline{g}$ preserves the right hand side of (\ref{char5}) up to a scalar multiple.

\medskip
Suppose that $\overline{g}$ has order $5n$.
Using Jordan canonical form we may assume that
$$\overline{g}^n=\overline{u}=\begin{pmatrix}
1&0&0\\0&1&0\\0& 1& 1
\end{pmatrix},\quad\overline{g}=\begin{pmatrix}
\zeta_n&0&0\\0&1&0\\0& \frac{1}{n}& 1
\end{pmatrix}$$
where $\zeta_n$ is a primitive $n$-th root of 1.
If $n\ge 7$, then it is easy to see that there are two possibilities
for the pair $(P_1,\,\,P_6)$
$$(P_1,\,\,P_6)=(Ax,\,\,Bx^6)\,\,\,{\rm or}\,\,\,(Ax_0,\,\,Bx_0x^5),$$ and in either case the
point $(x_0:x:y,z)=(1:0:0,0)$ is a non-rational double point, hence (\ref{char5}) does not
define a K3 surface. Thus $n\le 6$, $n\neq 5$.\\
 If $n=6$, then there are two possibilities
$$(P_1,\,\,P_6)=(Ax,\,\,B_0x_0^6+B_6x^6)\,\,\,{\rm or}\,\,\,(Ax_0,\,\,Bx_0x^5).$$ The second case does not define a K3 surface. Neither does the first case with $AB_0=0$. Assume the first case with $AB_0\neq 0$. Then  the equation $$z^2=Ax(y^5-yx^4)+ B_0x_0^6+B_6x^6$$ defines a (smooth)  K3 surface and admits an automorphism of order $30$
$$g(x_0:x:y,z)=(\zeta_6x_0:x:x+y, \pm z).$$ 
Moreover, by rescaling coordinates we may assume that $A=B_0=1$.

\medskip
Suppose that $n=2m$ and $\overline{g}$ has order $5m$. Since $\overline{g}^{2m}=\overline{u}=\overline{u}^6$, $\overline{g}^m=\overline{u}^3$ and we may assume that
$$\overline{g}=\begin{pmatrix}
\zeta_m&0&0\\0&1&0\\0& \frac{3}{m}& 1
\end{pmatrix}.$$
By a similar computation as above, $m\le 6$, $m\neq 5$. If $m=6$, then, as we saw in the above,
the order of $g$ would be $5m=30$. Thus $m\le 4$.
\end{proof}

\begin{example}\label{exam5} Take $m=4$. Then there are two possibilities for the pair
$$(P_1,\,\,P_6)=(Ax,\,\,B_2x_0^4x^2+B_6x^6)\,\,\,{\rm or}\,\,\,(Ax_0,\,\, B_1x_0^5x+B_5x_0x^5).$$
In the first case the surface has a non-rational double point, so does not yield a K3 surface. In the second, $A\neq 0$, so we may assume $A=1$. The surface 
$$z^2=x_0(y^5-yx^4+B_1x_0^4x+B_5x^5)$$
has at worst rational double points iff  $B_1\neq 0$ iff the quintic is smooth. Assume $B_1\neq 0$. Then the equation has 5 rational double points of type $A_1$, $g$ must map $z^2$ to $\zeta_4z^2$, hence by putting $\zeta_4=\zeta_8^2$ we can rewrite $g$ as
$$g(x_0:x:y,z)=(\zeta_8^2x_0:x:2x+y, \zeta_8z)$$
which is of order 40. By rescaling, we may assume $B_1=1$. This is the only example of order 40, as will be shown in this section.
\end{example}

\begin{lemma}\label{25} There is no automorphism of order $5^2$.
\end{lemma}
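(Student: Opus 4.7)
My plan is to argue by contradiction using Theorem \ref{faith} and the classification of order-$5$ automorphisms in Proposition \ref{5}. Suppose $g$ is an automorphism of order $25$ on a K3 surface $X$ in characteristic $5$, and set $u:=g^5$, which has order $5$. By the faithfulness of the $l$-adic representation, $g^*$ on $H^2_{\rm et}(X,\bbQ_l)$ has order exactly $25$: otherwise $u^*$ would be trivial and Theorem \ref{faith} would force $u=1$. In particular, $\zeta_{25}$ appears as an eigenvalue of $g^*$.

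The next step is to pin down $[g^*]$ completely. By Proposition \ref{integral}(1) the characteristic polynomial of $g^*$ has integer coefficients, so by Lemma \ref{Weyl} the cyclotomic polynomial $\Phi_{25}$ divides it, contributing all $20$ primitive $25$-th roots of unity. The remaining two eigenvalues must come from cyclotomic factors $\Phi_d$ with $d\mid 25$, i.e.\ $d\in\{1,5,25\}$; since $\deg\Phi_5=4$ and $\deg\Phi_{25}=20$, only $\Phi_1$ fits in two slots. Hence
$$[g^*]=[1,\,1,\,\zeta_{25}:20],$$
one of the $1$s accounting for an invariant ample class (Proposition \ref{integral}(2)). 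Raising to the fifth power, each primitive $25$-th root is sent to a primitive $5$-th root, each such $5$-th root being hit exactly five times, so
$$[u^*]=[1,\,1,\,(\zeta_5:4).5],$$
and $\dim H^2_{\rm et}(X,\bbQ_l)^u=2$.

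To conclude, I will match this eigenvalue data against the five geometric cases listed in Proposition \ref{5}. Cases (2), (3), (4) and (5) yield invariant dimensions $6$, $10$, $14$ or $6$, so none is compatible with $\dim H^2_{\rm et}(X,\bbQ_l)^u=2$. The only remaining possibility is case (1), in which $X^u$ contains a curve of arithmetic genus $2$; but Lemma \ref{kod2}, applied to $g$ viewed as an automorphism of order $5n$ with $n=5$ and $u=g^n$, gives the contradiction $n\neq 5$. There is no real obstacle here: the genuine work has already been carried out in Lemma \ref{kod2} via the double-cover-of-$\bbP^2$ structure for the genus-$2$ case, and what remains is just the short eigenvalue bookkeeping above, made possible by Theorem \ref{faith}.
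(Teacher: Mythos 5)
Your argument is correct and follows the same route as the paper's (very terse) proof: both derive $[g^*]=[1,\,1,\,\zeta_{25}:20]$, deduce $[u^*]=[1,\,1,\,(\zeta_5:4).5]$ for $u=g^5$, and then observe that this eigenvalue list is incompatible with every case of Proposition \ref{5}, with the genus-$2$ case (1) excluded by Lemma \ref{kod2}. You have simply made explicit the faithfulness and cyclotomic-polynomial bookkeeping that the paper leaves implicit.
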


\begin{proof} Suppose that ${\rm ord}(g)=25$. Then  $g^*|H^2_{\rm
  et}(X,\bbQ_l)$, $l\neq 5$, has eigenvalues
$$[g^*]=[1,\, 1,\, \zeta_{25}:20].$$
Thus $u=g^5$ has $[u^*]=[1,\, 1,\, (\zeta_5:4).5]$, which does not
occur by Proposition \ref{5} and Lemma \ref{kod2}.
\end{proof}

\begin{lemma}\label{kod1} Let $g$ be an automorphism of order
$5n$, $5\nmid n$.  If $\kappa(X,X^u)=1$ for $u=g^n$, then
$n=1,2,3,4, 6$.
\end{lemma}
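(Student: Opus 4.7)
The plan is as follows. Since $g$ commutes with $u$, it preserves $X^u=\textup{supp}(F_0)$ and hence the fibration $\psi:X\to\bbP^1$ provided by Proposition~\ref{5}(2). The induced action of $g$ on the base $\bbP^1$ is a finite-order element of $\PGL_2(\overline{\bbF_5})$, and any such element has order either coprime to $5$ or exactly $5$: a nontrivial unipotent and a nontrivial semisimple element of $\PGL_2$ cannot commute, since the former has a unique fixed point on $\bbP^1$ while the latter, being diagonalizable, has two. Since $u=g^n$ acts on the base with order $5$ by Proposition~\ref{5}(2), the image of $g$ in $\PGL_2$ is the cyclic group of order $5$ generated by the image of $u$, so $g^5$ acts trivially on the base and preserves every fibre of $\psi$.

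Next I analyze the action of $g^5$ on a generic fibre $E$ of $\psi$. Any automorphism of the genus-$1$ curve $E$ decomposes as $T_P\rho$ after choosing a base point, where $\rho$ fixes that base point and has order $r\in\{1,2,3,4,6\}$ in characteristic $5$. A direct computation using $1+\rho+\cdots+\rho^{r-1}=0$ in $\End(E)$ for $r>1$ shows that $(T_P\rho)^k=\textup{id}_E$ iff $r\mid k$, so $T_P\rho$ has order exactly $r$ when $\rho\neq 1$, and otherwise has order equal to the order of $P$. Since any automorphism of $X$ acting trivially on $E$ is trivial (by density), the order of $g^5$ on $E$ equals $n$. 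Two cases arise: (A) $\rho\neq 1$, forcing $n=r\in\{2,3,4,6\}$; or (B) $\rho=1$, in which case $g^5=T_P$ is translation by an $n$-torsion section of the Jacobian fibration $J(\psi)$, hence a symplectic automorphism of $X$. By Proposition~\ref{sym}, case (B) forces $n\le 8$, so $n\in\{1,2,3,4,6,7,8\}$.

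To rule out $n=7$ in (B): $g^5$ preserves every fibre of $\psi$ and has order $7$, coprime to both $|\Aut(\widetilde A_2)|=6$ and $|\Aut(\widetilde E_7)|=2$. Hence $g^5$ acts trivially on the dual graph of $F_0$ and preserves each of its components, so every pairwise intersection of components is fixed. This gives the triple point plus one extra fixed point on each of the three components in the type-$IV$ case ($\ge 4$ fixed points), and all $7$ intersection points in the type-$III^*$ case. Either contradicts $|X^{g^5}|=3$ from Proposition~\ref{sym}.

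The main obstacle is to exclude $n=8$ in (B), since the fixed-point count can be consistent (in the type-$IV$ case $g^5$ may swap two components, producing only $2$ fixed points on $F_0$). I turn to eigenvalue analysis. The $22$ eigenvalues of $g^*$ on $H^2_{\textup{et}}(X,\bbQ_l)$ are $40$-th roots of unity, and $\lambda\mapsto\lambda^8$ sends them to eigenvalues of $u^*$, whose multiplicities are given by Proposition~\ref{5}(2). Writing $c_d$ for the common multiplicity of each primitive $d$-th root of unity in $[g^*]$ (Lemma~\ref{Weyl}), the multiplicity of each primitive $8$-th root in $[g^{5*}]$ is $c_8+4c_{40}$, and similarly for other orders (the factor $4$ being $\phi(40)/\phi(8)=\phi(20)/\phi(4)=\phi(10)/\phi(2)=\phi(5)/\phi(1)$). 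Since $g^5$ is tame symplectic of order $8$, Lemma~\ref{Lefschetz} gives $[g^{5*}]=[1.4,(\zeta_8{:}4).2,(\zeta_4{:}2).3,-1.4]$, forcing $c_{40}=0$, $c_8=2$, $c_{20}=0$, $c_4=3$, and (using $c_1\ge 1$ from an invariant ample class) $c_5=0$, $c_1=4$. But then the number of eigenvalues of $g^*$ with $\lambda^8=1$ is at least $c_1+2c_4+4c_8=4+6+8=18$, contradicting the multiplicity $6$ (type $IV$) or $10$ (type $III^*$) of $1$ in $[u^*]$. Hence $n\neq 8$, and the lemma follows.
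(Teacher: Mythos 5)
Your proof is correct, but it takes a genuinely different route from the paper's. The paper works entirely on $H^2_{\rm et}$: for each candidate value ($n=9$, $n$ a prime $\ge 7$, $n=8$, $n=12$) and each fibre type ($IV$, $III^*$) it combines Theorem \ref{faith}, Lemma \ref{Weyl}, the action of $g$ on the components of $F_0$, and trace and fixed-point computations to reach a contradiction. You instead exploit the geometry of the $u$-invariant fibration: since the image of $g$ in $\PGL_2(\overline{\bbF_5})$ must have order exactly $5$, the order-$n$ automorphism $g^5$ preserves every fibre, and its decomposition $T_P\rho$ on the geometric generic fibre forces either $n={\rm ord}(\rho)\in\{2,3,4,6\}$ or $g^5$ to be a symplectic translation, whence $n\le 8$ by Proposition \ref{sym}; only $n=7$ (excluded by counting fixed points on $F_0$ against the three allowed by Proposition \ref{sym}) and $n=8$ (excluded by your multiplicity bookkeeping, which I have checked and which is airtight) remain. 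This is more conceptual and disposes of $n=9$, $n=12$ and all primes $\ge 7$ in one stroke, at the cost of invoking the structure of $\Aut$ of an elliptic curve over $\overline{k(t)}$ and the identification of fibrewise translations with torsion sections of the Jacobian. Two small points of hygiene: the fact that $u$ acts on the base with order $5$ is established in the proof of Proposition \ref{5} rather than in its statement, so the citation should point there; and the ``density'' step requires $E$ to be the generic fibre in the scheme-theoretic sense --- triviality of a power of $g^5$ on a single closed fibre would prove nothing --- which is evidently how you intend it, but is worth saying explicitly.
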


\begin{proof} Since $\kappa(X,X^u)=1$, we are in the case (2) of Proposition \ref{5}. The fixed locus $X^u$  is the
support of a fibre $F_0$ of a fibration of curves of arithmetic
genus $1$, and $F_0$ is of type $IV$ or $III^*$. Note that $g$
acts on $X^u$.

\medskip Case 1. $F_0$ is of type $IV$. Then $u=g^n$ has
$$[u^*]=[1,\, 1.5,\, (\zeta_5:4).4].$$

\medskip\noindent
Claim:  $n\neq 3^2$, $n$ cannot be a prime $\ge 7$.\\
Suppose that $n=3^2$. Then by Theorem \ref{faith}, either
$\zeta_9$ or $\zeta_{45}$ must appear in $[g^*]$. But $\phi(9)>5$
and $\phi(45)>16$. Similarly, $n$ cannot be a prime $\ge 7$.

\medskip\noindent
Claim:  $n\neq 2^3$.\\
Suppose that $n=2^3$. Then $g$ preserves one component of $X^u$
and interchanges or preserves  the other two components. Thus the
action of $g$ on the 3 components of $X^u$ has eigenvalues
$[1,\,1,\,\pm 1]$ and we infer that
$$[g^*]=[1,\,1,\,1,\,\pm 1,\, \eta_1,\,\eta_2,\, \zeta_{40}:16]$$
where $\eta_1,\,\eta_2$ is a combination of $\zeta_4:2$, $-1$,
$1$. The involution $s=g^{20}$ preserves each of the three
components of $F_0$. It is tame and has $e(X^s)\neq 8$, hence is
non-symplectic. Thus there is a curve $C$ in ${\rm Fix}(s)$
passing through the singular point $P$ of $F_0$. The curve $C$ may be
equal to one of the 3 components of $F_0$. In any case we get a
contradiction since a tame involution, locally at $P\in C$,
can preserve at most one curve other than $C$.

\medskip\noindent
Claim:  $n\neq 12$.\\
Suppose that $n=12$. Then the tame involution $s=g^{30}$ preserves
each of the three components of $F_0$. Assume that $s=g^{30}$ is
non-symplectic. Then we get a contradiction as in
the previous case. Assume that $s=g^{30}$ is symplectic. Then $s$
has 8 fixed points on $X$ by Proposition \ref{sym}. We infer that
it has 4 fixed points on $F_0$. Since $g(F_0)=F_0$, $g$ acts on
the base of the elliptic fibration. Since $u=g^{12}$ acts
non-trivially on the base, by Lemma \ref{P1} $g^5$ acts trivially.
Thus $s=g^{30}$ acts on each singular fibre. There are at least 5
singular fibres, a $\langle g\rangle$-orbit,  away from $F_0$ and on each of them $s$ has a
fixed point.

\medskip Case 2. $F_0$ is of type $III^*$. Then $u=g^n$ has
$$[u^*]=[1,\, 1.9,\, (\zeta_5:4).3].$$

\medskip\noindent
Claim:  $n\neq 3^2$, $n\neq$ a prime $\ge 7$.\\
Suppose that $n=3^2$. The order 9 action of $g$ on $X^u$ preserves
each of the 8 components. Thus $[1,\,1.8]\subset [g^*]$, hence
$\zeta_9\notin [g^*]$ and by Theorem \ref{faith} $\zeta_{45}\in
[g^*]$. But $\phi(45)>12$. Similarly, $n$ cannot be a prime $\ge
7$.

\medskip\noindent
Claim:  $n\neq 2^3$.\\
Suppose that $n=2^3$. Then $g$ acts on $X^u$, flipping or
preserving the configuration of the 8 components. Thus the action
of $g$ on the 8 components of $X^u$ has eigenvalues $[1.8]$ or
$[1.5,\,-1.3]$, hence $\zeta_8\notin [g^*]$. Then by Theorem
\ref{faith}, $\zeta_{40}\in [g^*]$. But $\phi(40)>12$.

\medskip\noindent
Claim:  $n\neq 12$.\\
Suppose that $n=12$. As in the previous case, the action of $g$ on
the 8 components of $X^u$ has eigenvalues $[1.8]$ or
$[1.5,\,-1.3]$, hence $[g^*]$ contains either $[1,\,1.8,\,\pm 1]$
or $[1,\,1.5,\,-1.3,\,\pm 1]$. Since $\phi(60)>12$, we infer that
$[(\zeta_5:4).3]$ in $[u^*]$ must come from a combination of
$\zeta_{30}:8$, $\zeta_{20}:8$, $\zeta_{15}:8$, $\zeta_{10}:4$,
$\zeta_5:4$ in $[g^*]$. In any case, $g^{30}=1$ or $g^{20}=1$.
\end{proof}

\begin{lemma}\label{2pts} Let $g$ be an automorphism of order
$5n$, $5\nmid n$. If $g^n$ fixes at most two points and has $[g^{n*}]=[1,\, 1.5,\, (\zeta_5:4).4]$  where the first eigenvalue corresponds to a g-invariant ample divisor, then
$n=1,2,3,4, 6$.
\end{lemma}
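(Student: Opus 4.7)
By Theorem \ref{faith}, $g^*$ acts faithfully on $H^2_{\rm et}(X,\bbQ_l)$, so the LCM of its eigenvalue orders equals $5n$. Since $[u^*]=[1,1.5,(\zeta_5:4).4]$, the twenty-two eigenvalues of $g^*$ split as six values $\alpha_1,\ldots,\alpha_6$ with $\alpha_i^n=1$ (one of them being $1$, from the $g$-invariant ample class of Proposition \ref{integral}(2)) and sixteen values $\beta_j$ whose $n$-th powers are primitive fifth roots of unity, so that ${\rm ord}(\alpha_i)\mid n$ and ${\rm ord}(\beta_j)=5d_j$ with $d_j\mid n$. By Lemma \ref{Weyl} a primitive $m$-th root of unity contributes $\phi(m)$ eigenvalues to $[g^*]$, so $\zeta_m$ can sit in the $\alpha$-group only if $\phi(m)\le 5$, and $\zeta_{5m}$ can sit among the $\beta$'s only if $\phi(5m)=4\phi(m)\le 16$.

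For $n=9$ one has $\phi(9)=6$ and $\phi(45)=24$, so no eigenvalue of $g^*$ has order divisible by $9$ and the LCM of orders divides $15<45$; for $n=p$ a prime $\ge 7$ one similarly has $\phi(p)\ge 6$ and $\phi(5p)\ge 24$, so the LCM divides $5<5p$. Each case contradicts faithfulness. Any larger $n$ whose $5$-free part is divisible by $9$ or by a prime $\ge 7$ is then excluded by passing to a power of $g$ whose $n'$-th power is still $u$, so that the hypothesis is inherited.

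It remains to rule out $n=8$ (order $40$) and $n=12$ (order $60$); here the eigenvalue counting alone is insufficient, since both $\zeta_{40}$ and $\zeta_{60}$ have $\phi=16$ and fit exactly into the sixteen $\beta$-slots. I would first enumerate the admissible shapes of $[g^*]$, then examine the tame powers of $g$: for $n=8$ the involution $g^{20}$, the order-$4$ element $g^{10}$, and the order-$8$ element $g^{5}$; for $n=12$ the involution $g^{30}$, the order-$3$ element $g^{20}$, and the order-$4$ element $g^{15}$. Lemma \ref{Lefschetz}, Proposition \ref{sym}, Lemma \ref{nsym4} and Lemma \ref{nsym2} pin down their fixed loci, which must be consistent with ${\rm Fix}(g^k)\cap X^u\subseteq X^u$; since $|X^u|\le 2$ this is very restrictive. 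Now invoke Proposition \ref{5}: in case (3) the minimal resolution of $X/\langle u\rangle$ is a K3 with two $E_8$-points, while in case (5) there is a $u$-invariant type $IV$ elliptic fibration, which $g$ must preserve because $X^u$ is then a single $g$-fixed point, and Lemma \ref{P1} controls the induced action on the base $\bbP^1$. Finally, Deligne-Lusztig (Proposition \ref{DL}) applied to decompositions $g^k=s_k u^{e_k}$ equates $\Tr(g^{k*}|H^*(X))$ with a trace on $X^{s_k}$, and the resulting numerical incompatibility with the tiny $|X^u|$ is the expected source of contradiction, in the spirit of Lemmas \ref{kod1}, \ref{44} and \ref{84}. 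This last combinatorial-geometric step is the main obstacle: because multiple eigenvalue shapes survive the purely combinatorial analysis, one must genuinely leverage the case-(3)-or-(5) geometric structure to eliminate each shape in turn.
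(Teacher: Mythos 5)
Your first two steps are sound and coincide with the paper's: the faithfulness of Theorem \ref{faith} together with the Weyl-conjugacy constraint (Lemma \ref{Weyl}) kills $n=9$ and $n$ a prime $\ge 7$ exactly as in the paper (via $\phi(9)>5$, $\phi(45)>16$, $\phi(p)>5$, $\phi(5p)>16$), and your reduction of a general inadmissible $n$ to a divisor in $\{8,9,12\}\cup\{\text{primes}\ge 7\}$ by passing to $g^{n/n'}$ is legitimate, since $(g^{n/n'})^{n'}=u$ so the hypothesis is inherited. However, for $n=8$ and $n=12$ you have written a plan, not a proof, and you say so yourself (``this last combinatorial-geometric step is the main obstacle''). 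That step is not a routine verification: it is essentially the entire content of the lemma, occupying the bulk of the paper's argument. For $n=8$ one must enumerate three shapes of $[g^*]$ and kill each; for $n=12$ one must first prove that neither $g^{30}$ nor $g^{20}$ is symplectic (itself a nontrivial orbit-counting argument on the $8$- or $6$-point symplectic fixed locus, plus an eigenvalue computation showing $g^{30}=1$ in a subcase), and then dispose of four shapes of $[g^*]$. A proposal that stops short of this cannot be accepted as a proof.

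Moreover, the mechanism you propose for that missing step is not quite the one that works. The paper does \emph{not} invoke the case (3)/(5) dichotomy of Proposition \ref{5} inside this lemma (no use is made of the two-$E_8$ K3 quotient or of the $u$-invariant type $IV$ fibration), nor does it use Lemma \ref{nsym2}, which belongs to the order-$44$ argument in characteristic $11$. What actually closes each case is: compute $[g^{k*}]$ and $e(g^k)$ for the tame powers $g^{20},g^{10}$ (resp. $g^{30},g^{20},g^{10}$), read off the structure of ${\rm Fix}(g^k)$ as a configuration of a higher-genus curve plus rational curves, and then exploit the hypothesis that $g^2$, $g^4$, $g^6$ each fix at most two points to force a contradiction in the orbit structure on that configuration; this is supplemented by one rank comparison ($\rank\,{\rm NS}(X)^{g^{4}}\ge 3$ against $\dim H^2_{\rm et}(X,\bbQ_l)^{g^{4}}=2$) and a single application of Deligne--Lusztig to $g^{28}=su$. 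Your outline gestures at the right toolbox but does not identify these concrete contradictions, so the gap is genuine.
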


\begin{proof} Let
$u:=g^{n}.$ Then $u$ has order 5. 

\medskip\noindent
Claim:  $n\neq 3^2$, $n$ cannot be a prime $\ge 7$.\\
The proof is the same as in Case 1, Lemma \ref{kod1}.

\medskip\noindent
Claim:  $n\neq 2^3$.\\
Suppose that $n=2^3$. Then $u=g^8$. Since $g$ acts on the set ${\rm Fix}(u)$ and ${\rm Fix}(g^2)\subset {\rm
Fix}(g^4)\subset {\rm Fix}(g^{8})$, we see that
$${\rm Fix}(g^2)={\rm Fix}(g^4)={\rm Fix}(g^{8})= \{{\rm two\,\,points}\}\,\,{\rm or}\,\,\{{\rm one\,\,point}\}.$$
The proof will be divided into 3 cases according to the
possibility of $[g^*]$:

\medskip
(1) $[1,\,\zeta_8:4,\,\pm 1,\, \zeta_{40}:16]$,

(2) $[1,\,\eta_1,\ldots, \eta_5,\, \zeta_{40}:16]$, where $\eta_i$'s are combination of $\zeta_4:2$, $\pm 1$,

(3) $[1,\,\zeta_8:4,\,\pm 1,\, \tau_1,\ldots,\tau_{16}]$, 
where $\tau_i$'s are combination of $\zeta_{20}:8$, $\pm\zeta_5:4$.

\medskip\noindent
Case (1). 
The tame involution $s=g^{20}$ has $$[g^{20*}]=[1,\,-1.4,\,1,\, -1.16],\,\,\,e(g^{20})=-16,$$ hence is non-symplectic. Thus  ${\rm Fix}(g^{20})$
consists of either a curve $C_9$ of genus 9 or a smooth rational
curve $R$ and a curve $C_{10}$ of genus 10. The tame automorphism
$g^{10}$ has $e(g^{10})=4$. Since ${\rm
Fix}(g^{10})\subset {\rm Fix}(g^{20})$, we infer that ${\rm Fix}(g^{10})$
consists of either 4 points or $R$ and 2 points on
$C_{10}$. In any case, $g$ acts on ${\rm Fix}(g^{10})$, hence
$g^2$ fixes at least 3 points, a contradiction.

\medskip\noindent
Case (2). In this case, $s=g^{20}$ has $[g^{20*}]=[1,\,1.5,\, -1.16]$, $e(g^{20})=-8$, hence is non-symplectic. Thus ${\rm Fix}(g^{20})$ consists
of a curve $C_{d+5}$ of genus $d+5$ and $d$ smooth rational
curves, where $0\le d\le 5$. Applying Deligne-Lusztig (Proposition
\ref{DL}) to $g^{28}=su$, we get
$$\Tr (u^*|H^*(X^{s}))=\Tr (g^{28*}|H^*(X))=12.$$ If $d\le 2$, then $$\Tr
(u^*|H^*(X^{s}))= 2d+\Tr (u^*|H^*(C_{d+5}))\le 7.$$ Thus $d\ge 3$. If $d=3$ or 4, then $g^8$ preserves each of the $d$
smooth rational curves, hence  fixes at least $d$ points, a
contradiction. If $d=5$, then either the 5 smooth rational curves form a single orbit under $u=g^8$ or each of them is preserved by $g^8$. In the former case  $$\Tr
(u^*|H^*(X^{s}))= \Tr (u^*|H^*(C_{10}))\le 7.$$ In the latter $g^8$ fixes at least one point on each of the 5 smooth rational curves, a contradiction.

\medskip\noindent
Case (3). In this case,
$g^{20}$ has $[g^{20*}]=[1,\,-1.4,\,1,\,\, 1.16]$, $e(g^{20})=16$, hence is non-symplectic. Thus $X^s$ is
either a union of  $8$ smooth rational curves and possibly some
elliptic curves or a union of a curve $C_{d-7}$ of genus $d-7$ and
$d$ smooth rational curves, where $9\le d\le 17$. In the first
case, the order 5 action of $u=g^{8}$ on ${\rm Fix}(g^{20})$
preserves at least 3 among the 8 smooth rational curves, hence
fixes at least 3 points, a contradiction. Thus we are in the second
case. Deligne-Lusztig (Proposition \ref{DL}) does not work here,
so we need a different argument.  The action of $g^4$ on $X^s$ is
of order 5 with at most 2 fixed points. This is possible only if
$d=5r$, $5r+1$ or $5r+2$, and the $5r$ smooth rational curves form $r$ orbits under  $g^4$.
Since $9\le d\le 17$, $r=2$ or 3 and each orbit (and $C_{d-7}$ also) gives a $g^{4}$-invariant
divisor, hence $\rank~\NS(X)^{g^{4}}\ge 3$. But $\dim H^2_{\rm
et}(X,{\bbQ}_l)^{g^{4}}=2$.

\medskip
Claim:  $n\neq 12$.\\
Suppose that $n=12$. Then $u=g^{12}$. Since $g$ acts on the set ${\rm Fix}(u)$ and ${\rm Fix}(g^i)\subset {\rm
Fix}(g^{12})$ for any $i$ dividing $12$, we see that
$${\rm Fix}(g^2)={\rm Fix}(g^4)={\rm Fix}(g^{6})= {\rm Fix}(g^{12})=\{{\rm two\,\,points}\}\,\,{\rm or}\,\,\{{\rm one\,\,point}\}.$$
First we claim that neither $g^{30}$ nor $g^{20}$ is symplectic. Suppose that $g^{30}$ is symplectic. Then  by
Proposition \ref{sym}, ${\rm Fix}(g^{30})$ consists of 8 points,
on which $g$ acts. This permutation has order dividing 30, so in its cycle decomposition each cycle has length 1, 2, 3, 5, 6, hence  $g^6$
must fix at least 3 points. This proves that $g^{30}$ is not
symplectic. Suppose that $g^{20}$ is symplectic. Then ${\rm
Fix}(g^{20})$ consists of 6 points, on which $g$ acts with order
dividing 20. Its cycle decomposition must contain a cycle of length 5, because otherwise $g^4$ would fix all the 6 points. So $g^5$ fixes all the 6 points and 
${\rm Fix}(g^5)={\rm Fix}(g^{10})= {\rm Fix}(g^{20})=\{{\rm 6\,\,points}\}.$ Note that $[g^{20*}]=[1,\, 1.9,\, (\zeta_3:2).6]$. Thus $[\zeta_{60}:16]$ cannot appear in $[g^*]$.  It is easy to check that neither $[\zeta_{20}:8]$ nor $[\zeta_{12}:4]$ can appear in $[g^*]$.  Indeed, if $[\zeta_{20}:8]\subset [g^*]$, then $e(g^{10})<6$, and if $\zeta_{20}\notin [g^*]$ and $[\zeta_{12}:4]\subset [g^*]$, then $e(g^{10})>6$. This implies that $g^{30}=1$. This proves that $g^{20}$ is
not symplectic. The claim is proved, hence ${\rm ord}(g)=5.12$. Now $\zeta_{12}\in
[g^{5*}]$, thus  $\zeta_{12}$ or $\zeta_{60}\in [g^*]$. We have 4 cases for$[g^*]$: 

\medskip
(1) $[1,\,\zeta_{12}:4,\,\pm 1,\, \zeta_{60}:16]$ or $[1,\,\zeta_{12}:4,\,\pm 1,\, (\zeta_{20}:8).2]$,

(2)  $[1,\,\zeta_{12}:4,\,\pm 1,\, \zeta_{20}:8,\,\eta_1,\ldots, \eta_8]$, 

(3) $[1,\,\zeta_{12}:4,\,\pm 1,\,\eta_1,\ldots, \eta_{16}]$,

(4) $[1,\,\tau_1,\ldots, \tau_{5},\,\zeta_{60}:16]$ 

where $\eta_i$'s are combination of $\pm\zeta_{15}:8$, $\pm\zeta_{5}:4$, and $\tau_i$'s combination of 

$\pm\zeta_{3}:2$, $\zeta_{4}:2$, $\pm 1$.

\medskip\noindent
Case (1). In this case, $[g^{30*}]=[1,\,-1.4,\,1,\, -1.16]$, $e(g^{30})=-16.$ Thus  ${\rm Fix}(g^{30})$
consists of either a curve $C_9$ of genus 9 or a smooth rational
curve $R$ and a curve $C_{10}$ of genus 10. The tame automorphism
$g^{10}$ has $e(g^{10})=14$ or $-10$. Since ${\rm
Fix}(g^{10})\subset {\rm Fix}(g^{30})$,  ${\rm Fix}(g^{10})$
consists of either 14 points or the curve $R$ and 12 points on
$C_{10}$. In any case, the order 5 action $g^2$ on ${\rm Fix}(g^{10})$ fixes at least 3 points, a contradiction.

\medskip\noindent
Case (2). In this case, $[g^{30*}]=[1,\,-1.4,\,1,\, -1.8,\,1.8]$, $e(g^{30})=0.$ The tame automorphism
$g^{10}$ has $e({\rm Fix}(g^{10}))=6$ or $-6$.
The fixed locus ${\rm Fix}(g^{30})$
consists of either elliptic curves or a curve $C_{d+1}$ of genus $d+1$ and $d$ smooth rational
curves, $1\le d\le 9$. In the second case, since $g^6$ has $\dim H^2_{\rm
et}(X,{\bbQ}_l)^{g^{6}}=2$ and preserves $C_{d+1}$, it has at most one orbit on the $d$ smooth rational
curves. Each orbit has length 1 or 5, so $d=1$ or 5.   If $d=5$, then $g^{10}$ preserves each of the 5 smooth rational
curves, hence $e(g^{10})\ge 10$ if $g^{10}|C_6$ is not the identity and  $e(g^{10})=0$ if the identity. If $d=1$, then, since $e(g^{10})=\pm 6$, $g^{10}$ fixes 4 points on $C_2$, then $g^2$ fixes the 4 points. If ${\rm Fix}(g^{30})$
consists of elliptic curves, then  $g^{10}$ fixes 3 points on one of the elliptic curves, 3 points on another and some of the elliptic curves, then $g^2$ fixes the 6 points.

\medskip\noindent
Case (3). In this case, $[g^{30*}]=[1,\,-1.4,\,1,\,\,1.16]$, $e(g^{30})=16.$ Thus ${\rm Fix}(g^{30})$
consists of either 8 smooth rational
curves and possibly some elliptic curves or a curve $C_{d-7}$ of genus $d-7$ and $d$ smooth rational
curves, $9\le d\le 17$. In the second case, since $g^6$ has $\dim H^2_{\rm
et}(X,{\bbQ}_l)^{g^{6}}=2$ and preserves $C_{d-7}$, it has at most one orbit on the $d$ smooth rational
curves. This is impossible, since each orbit has length 1 or 5.  In the first case the action of $g^6$ on the 8 rational curves must preserve at least 3 of them, hence leads to a contradiction.

\medskip\noindent
Case (4).
This case can be ruled out by the same argument as above.
\end{proof}

\begin{lemma}\label{1ptK3} Let $u$ be an automorphism of order $5$. If $u$ fixes exactly one point and  $X/\langle
u\rangle$ is a K3 surface with one rational double point. Then $\rank\, {\rm NS}(X)^u\le 4$.
\end{lemma}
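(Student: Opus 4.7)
The plan is to argue by contradiction, leveraging the additional information that under the hypotheses we land in case~(4) of Proposition~\ref{5}: namely $[u^*]=[1,\,1.13,\,(\zeta_5:4).2]$ and, crucially, $X$ admits no $u$-invariant elliptic fibration. Setting $r:=\rank\,{\rm NS}(X)^u$, I will suppose $r\ge 5$ and deduce the existence of a $u$-invariant elliptic fibration on $X$, contradicting case~(4). The remainder of the argument is lattice-theoretic plus a standard application of Piatetski-Shapiro--Shafarevich.

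First I would average an ample class over $\langle u\rangle$ to obtain a $u$-invariant ample class $H$. Since ${\rm NS}(X)$ has signature $(1,\rho(X)-1)$ and ${\rm NS}(X)^u\ni H$ contains a positive-norm vector, the sublattice ${\rm NS}(X)^u$ inherits signature $(1,r-1)$. Assuming $r\ge 5$, this lattice is indefinite of rank at least $5$, so Meyer's theorem produces a nonzero isotropic vector $e\in{\rm NS}(X)^u$. After replacing $e$ by $-e$ if necessary, I may assume $e\cdot H>0$.

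Next I would promote $e$ to a nef $u$-invariant isotropic class. Let $W\subset O({\rm NS}(X))$ be the Weyl group generated by reflections in classes of smooth $(-2)$-curves. Then $u$ normalizes $W$ (since $u$ permutes such curves) and preserves the ample cone containing $H$, which is a fundamental chamber for $W$ acting on the positive cone. Choose $w\in W$ with $f:=we$ nef. The identity $u(we)=(uwu^{-1})(ue)=(uwu^{-1})e$ shows that $uf$ is another nef element in the $W$-orbit of $e$; since two nef classes in the same $W$-orbit must coincide, $uf=f$. By the Piatetski-Shapiro--Shafarevich theorem for K3 surfaces, valid in positive characteristic, the primitive nef isotropic class in the ray $\bbR_{>0}\cdot f$ defines an elliptic fibration $\psi:X\to\bbP^1$, and $u$-invariance of the defining class induces a compatible action on $\bbP^1$ making $\psi$ itself $u$-invariant. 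This contradicts case~(4) of Proposition~\ref{5}, so $r\le 4$.

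The main obstacle I anticipate is establishing $u$-invariance of the nef representative $f$: this rests on the two facts that $u$ normalizes the Weyl group $W$ and that nef elements within a single $W$-orbit coincide, both of which must be handled with some care because the isotropic class $e$ may lie on a chamber wall. A secondary technical point is making precise the passage from a nef isotropic class to an actual elliptic fibration $\psi$ over $\bbP^1$ whose class is $u$-invariant, invoking Piatetski-Shapiro--Shafarevich in positive characteristic.
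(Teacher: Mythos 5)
Your proposal is correct and follows essentially the same route as the paper: both note that ${\rm NS}(X)^u$ is hyperbolic because it contains an averaged $u$-invariant ample class, use the fact that an indefinite lattice of rank $\ge 5$ represents zero to produce an isotropic class, move it by $(-2)$-reflections to a $u$-invariant nef isotropic class defining a $u$-invariant elliptic fibration, and contradict Proposition \ref{5}. The only cosmetic difference is that the paper carries out the reflection step via the decomposition $E=E'+R_1+\cdots+R_t$ of an effective isotropic class, whereas you phrase it through the Weyl-chamber fundamental-domain argument.
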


\begin{proof} There is an $u$-invariant ample divisor class, so the lattice ${\rm NS}(X)^u$ is hyperbolic. Suppose that
$\rank~{\rm NS}(X)^u\ge 5.$ Since a hyperbolic lattice of rank $\ge 5$ represents 0, so does ${\rm NS}(X)^u$.  Thus there is an $u$-invariant effective divisor class $E$ with $E^2=0$ of the form
$$E=E'+R_1+\ldots+R_t$$ where $E'$ is an elliptic curve,
$R_i$ a string of smooth rational curves, possibly non-reduced, such that the supports of $R_i$'s are mutually disjoint. Since $u$ preserves the class $E$, it preserves the class of $E'$ and permutes $R_i$'s. A composition of Picard-Lefschetz reflections sends $E$ to $E'$. The linear system $|E'|$ is an
$u$-invariant elliptic fibration, contradicting Proposition
\ref{5}.
\end{proof}

\begin{lemma}\label{1pt} Let $g$ be an automorphism of order
$5n$, $5\nmid n$. If $g^n$ fixes exactly one point and has $[g^{n*}]=[1,\, 1.13,\, (\zeta_5:4).2]$  where the first eigenvalue corresponds to a g-invariant ample divisor, then
$n=1,2,3,4, 6$. 
\end{lemma}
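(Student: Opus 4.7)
Let $u=g^n$, which has order $5$, with $X^u=\{P\}$ and $[u^*]=[1,1.13,(\zeta_5:4).2]$. By Proposition~\ref{5}(4), $X/\langle u\rangle$ is a K3 surface with a unique $E_8$-singularity at the image of $P$ and admits no $u$-invariant elliptic fibration; by Lemma~\ref{1ptK3}, $\rank\,\NS(X)^u\le 4$. Let $\pi\colon Y\to X/\langle u\rangle$ be the minimal resolution. Then $Y$ is a K3 surface, and $g$ descends to a tame automorphism $\bar g$ of order $n$ which preserves each of the eight exceptional $\bbP^1$'s componentwise (the $E_8$ Dynkin diagram has trivial symmetry group) and fixes the central trivalent component $R_0$ pointwise.

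The plan is to rule out $n\in\{7,8,9,11,12,13\}$; values $n\ge 14$ are eliminated directly, since Theorem~\ref{faith} and Lemma~\ref{Weyl} show that $\phi(5n)>22$ prevents $\zeta_{5n}\in[g^*]$ and simultaneously leaves insufficient room to match the multiplicities in $g^{n*}=u^*$. For each remaining $n$ I first enumerate the admissible $[g^*]$ by matching through $\lambda\mapsto\lambda^n$: the $14$ invariants of $u^*$ are filled by images of eigenvalues of order dividing $n$, and the eight primitive-fifth-root slots by images of eigenvalues of order $5d$ with $d\mid n$.

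For the primes $n=7,11,13$ and for $n=9$, I would apply Deligne--Lusztig (Proposition~\ref{DL}) to the Jordan decomposition $g=su$, with $s=g^{5a}$ tame of order $n$, to obtain $\Tr(g^*|H^*(X))=\Tr(u^*|H^*(X^s))$. The left side is read off the admissible $[g^*]$. On the right side, since $X^u=\{P\}$, the commuting wild action of $u$ on $X^s$ has at most one isolated fixed point and at most one fixed point on each $u$-invariant curve component of $X^s$, while length-$5$ $u$-orbits contribute nothing. Combined with $e(X^s)=\Tr(s^*|H^*(X))$ from Proposition~\ref{trace} and the $\bbZ/5$-Riemann--Hurwitz constraint on the wild action on higher-genus components of $X^s$, each admissible $[g^*]$ produces a numerical impossibility.

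For the composite cases $n=8,12$, the inclusion $R_0\subset{\rm Fix}(\bar g^k)$ for every $k$, together with Proposition~\ref{sym} (every nontrivial tame symplectic automorphism of a K3 surface has finite fixed locus), forces $\bar g$ to be purely non-symplectic of order $n$. Hence every eigenvalue of $\bar g^*$ on $T_l^2(Y)$ is a primitive $n$-th root and $\phi(n)\mid\rank\,T_l^2(Y)$; combined with $\rank\,\NS(Y)=8+\rank\,\NS(X)^u\le 12$ this pins down $\rank\,\NS(Y)=10$ and forces each primitive $n$-th root to appear on $T_l^2(Y)$ with multiplicity $3$. A direct computation then yields $e(Y^{\bar g^{n/2}})=0$, so the fixed locus of the non-symplectic involution $\bar g^{n/2}$ consists of $R_0$ together with further curves of total Euler characteristic $-2$, containing at least one component of genus $\ge 2$. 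Lifting this configuration back through $\pi$ and organizing its components by their orbits under the commuting wild $u$-action (which can preserve at most one non-$R_0$ component because $X^u=\{P\}$) produces either five $u$-conjugate $g^{n/2}$-fixed curves in $X$ — yielding an extra $u$-invariant divisor class that pushes $\rank\,\NS(X)^u$ past the bound of Lemma~\ref{1ptK3} — or a $u$-invariant elliptic fibration on $X$, contradicting Proposition~\ref{5}(4). The main obstacle is precisely this last step: matching the components of $Y^{\bar g^{n/2}}$ with their $\pi$-preimages on $X$ and excluding every remaining combinatorial possibility for the $u$-orbit distribution is where the delicate case analysis for $n=8,12$ is concentrated.
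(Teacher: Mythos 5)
Your overall architecture (pass to a divisor of $n$ in $\{7,8,9,11,12,13\}\cup\{\hbox{primes}\ge 17\}$, then combine faithfulness, integrality of characteristic polynomials, and Deligne--Lusztig) is the paper's, but there is a genuine gap at the hardest cases $n=11,13$. There your entire mechanism is Proposition \ref{DL} plus Euler-characteristic and Riemann--Hurwitz bookkeeping, and that is provably not enough: for $n=11$ the admissible list is $[g^*]=[1,\,\zeta_{11}:10,\,1.3,\,(\zeta_5:4).2]$, so Deligne--Lusztig gives $\Tr(u^*|H^*(X^{g^5}))=\Tr(g^{16*}|H^*(X))=3$, and in the subcase where $u=g^{11}$ fixes a point on a genus-$r$ component $C_r$ of ${\rm Fix}(g^5)$ the required value $\Tr(u^*|H^1_{\rm et}(C_r,{\bbQ}_l))=-1$ is realized by the integral eigenvalue pattern $[1.(t+2d-3),\,(\zeta_5:4).(t+2d-2)]$ and is compatible with wild Riemann--Hurwitz, so no numerical impossibility appears. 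The paper must switch to an entirely different argument here: classify the isolated fixed points of the non-symplectic order-$11$ automorphism $g^5$ into the local types $\frac{1}{11}(1,4),\ldots,\frac{1}{11}(8,8)$, resolve $X/\langle g^5\rangle$, and compare $K_Y^2=10-\rho(Y)$ with $K_{X'}^2-\sum K_YD_p$, obtaining a Diophantine equation with no integer solution. Nothing in your sketch supplies this step or a substitute for it ($n=13$ is the same). A smaller slip: composite $n\ge 14$ such as $16,18,24$ are not ``eliminated directly'' by $\phi(5n)>22$ (e.g.\ $\zeta_{16}\in[g^*]$ is consistent with $[u^*]$); they must be reduced to the divisor cases first.

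For $n=8,12$ you take a genuinely different route from the paper --- descending to the K3 resolution $Y$ of $X/\langle u\rangle$ and exploiting that $\bar g$ is purely non-symplectic because every power fixes the trivalent exceptional curve pointwise; that observation is correct and attractive. But the pivot of your count, namely that every eigenvalue of $\bar g^*$ on $T_l^2(Y)$ is a primitive $n$-th root of unity so that $\phi(n)\mid\dim T_l^2(Y)$, is the characteristic-$0$ statement that the transcendental lattice is a free $\bbZ[\zeta_n]$-module; in characteristic $p$ the paper only has integrality of $\Tr(\bar g^*|T_l^2(Y))$ (Proposition \ref{diminv}) and never proves a module structure on $T_l(\Br(Y))$ --- indeed $T_l^2(Y)$ can even vanish --- so the conclusion $\rank\,\NS(Y)=10$ is unsupported. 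You also concede that the final matching of the components of $Y^{\bar g^{n/2}}$ with their preimages in $X$ is not carried out, and that is where the paper does all the work: it stays on $X$, uses Lemma \ref{1ptK3} together with the rationality of $X/\langle g^{5n/2}\rangle$ and Proposition \ref{diminv} to pin down $[g^*]$ up to a short list, and then runs a case-by-case analysis of ${\rm Fix}(g^{5n/2})$ using the constraint $\dim H^2_{\rm et}(X,{\bbQ}_l)^{g^{n/2}}=2$ on orbit counts. As it stands the proposal establishes neither the prime cases $11,13$ nor the composite cases $8,12$.
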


\begin{proof} By Proposition \ref{5}, this is the case where $X/\langle
g^n\rangle$ is a K3 surface with one rational double point  and $X$ does not admit  a $g^n$-invariant elliptic fibration. The latter implies that ${\rm Fix}(g^{5i})$, $i\ge 1$, contains no elliptic curve, as $g^n$ acts on ${\rm Fix}(g^{5i})$, hence preserves elliptic curves in ${\rm Fix}(g^{5i})$ if any.

\medskip
Claim:  $n\neq 2^3$.\\
Suppose that $n=2^3$. Then $u=g^8$ and
$${\rm Fix}(g)={\rm Fix}(g^2)={\rm Fix}(g^4)={\rm Fix}(g^{8})= \{\rm a\,\,point\}.$$
If $g^{20}$ is symplectic, then
${\rm Fix}(g^{20})$ consists of 8 points on which $g^4$ acts. By considering orbit decomposition of the 8 points under $g^4$, we infer
that $g^4$ fixes three or all of them, a contradiction. Thus  $g^{20}$
is non-symplectic and the quotient $X/\langle  g^{20}\rangle$ is
rational. Write
$$[g^{*}]=[1,\,(\zeta_{8}:4).a,\,\,\eta_1,\ldots,\eta_{13-4a},\,\tau_1,\ldots, \tau_{8}]$$ where $\eta_i$'s are combination of $\zeta_{4}:2$ and $\pm 1$, and  $\tau_i$'s combination of $\pm\zeta_{5}:4$, $\zeta_{20}:8$. Since $X/\langle  g^{20}\rangle$ is
rational, by Proposition \ref{diminv} $$\rank~{\rm
NS}(X)^{g^{20}}=\dim H^2_{\rm et}(X,{\bbQ}_l)^{g^{20}}.$$ It follows that
$\eta_1,\ldots,\eta_{13-4a}$ are supported on ${\rm NS}(X)$ and
$$\rank~{\rm
NS}(X)^{g^8}\ge 1+(13-4a).$$ Then by Lemma \ref{1ptK3}, $14-4a\le 4$, i.e. $a=3$ and $$[g^{*}]=[1,\,(\zeta_{8}:4).3,\,\,\pm 1,\,\tau_1,\ldots, \tau_{8}]$$ where $\tau_i$'s are combination of $\pm\zeta_{5}:4$, $\zeta_{20}:8$.
Then we compute $$[g^{20*}]=[1,\,-1.12,\,1,\,\,1.8],\quad e(g^{20})=0,\quad e(g^{10})=-4\,\,{\rm or}\,\, 12.$$
Thus ${\rm Fix}(g^{20})$ is not empty. Since ${\rm Fix}(g^{20})$ contains no elliptic curve, it consists of a
curve $C_{d+1}$ of genus $d+1$ and $d$ smooth rational curves,
where $1\le d\le 9$. Note that $\dim H^2_{\rm
et}(X,{\bbQ}_l)^{g^{4}}=2$. Thus the action of $g^4$ on the $d$ smooth rational curves has at most 1 orbit. This is possible only if $d=1$ of $5$. 
In any case, the tame automorphism $g^{10}$ preserves
each of the $d$ smooth rational curves. 
Since $e(g^{10})=-4$
or $12$, we conclude that $g^{10}$
fixes 10 points on $C_{2}$ if $d=1$, which is impossible as no involution of a genus 2 curve can fix 10 points, and fixes 2 points on  $C_{6}$ if $d=5$, each of which must be fixed by $g^2$.

\medskip
Claim:  $n\neq 3^2$.\\
Suppose that $n=3^2$. Then $u=g^9$ and
$${\rm Fix}(g)={\rm Fix}(g^3)={\rm Fix}(g^9)= \,\,\{{\rm a\,\,point}\}.$$ By Theorem \ref{faith},
$\zeta_9\in [g^*]$. Assume $[g^*]=[1,\,\zeta_9:6,\,\eta_1,\ldots,
\eta_{7},\,\tau_1,\ldots, \tau_8]$ where $\eta_i$'s are combination of $\zeta_{3}:2$, $1$, and $\tau_i$'s combination of $\zeta_{15}:8$,
$\zeta_5:4$. Then the order 3 tame automorphism $g^{15}$ has \\
 $[g^{15*}]=[1,\,(\zeta_3:2).3,\,\, 1.7,\,\,1.8]$, $e(g^{15})=15$, hence is non-symplectic
by Lemma \ref{Lefschetz} and the quotient $X/\langle g^{15}\rangle$ is
rational. Then by Proposition \ref{diminv}, $\rank~{\rm
NS}(X)^{g^{15}}=\dim H^2_{\rm et}(X,{\bbQ}_l)^{g^{15}}$. It follows that
$\eta_1,\ldots, \eta_{7}$ are supported on ${\rm NS}(X)$, hence $\rank~{\rm
NS}(X)^u\ge 8,$  contradicting Lemma
\ref{1ptK3}. This proves that
$$[g^*]=[1,\,(\zeta_9:6).2,\,1,\,\tau_1,\ldots, \tau_8]$$ where
$[\tau_1,\ldots, \tau_8]=[\zeta_{15}:8]$ or $[(\zeta_5:4).2]$. Then $$[g^{15*}]=[1,\,(\zeta_3:2).6,\,\, 1,\,\,1.8], \quad e(g^{15})=6,\quad \Tr (g^{24*}|H^*(X))=-4.$$ 
The order 5 action of $u=g^9$ on ${\rm Fix}(g^{15})$ has exactly one fixed
point.

Assume that $u=g^9$ fixes an isolated point of ${\rm Fix}(g^{15})$. Then ${\rm Fix}(g^{15})$
consists of $5t+1$ points, $5d$ smooth rational curves, and possibly a curve $C_r$ of genus $r\ge 2$. (Note that ${\rm Fix}(g^{5i})$ contains no elliptic curve).
Since $u$ acts freely on $C_r$, it has trace $\Tr (u^*|H^*(C_r))=0$, so
$$\Tr (u^*|H^*(X^{g^{15}}))=\Tr (u^*|H^*({\rm a\,\,point}))+\Tr (u^*|H^*(C_r))= 1$$ and $\Tr (u^*|H^*(X^{g^{15}}))\neq \Tr (g^{24*}|H^*(X))$, which contradicts Deligne-Lusztig.

Assume that $u=g^9$ fixes a point of a smooth rational curve in
${\rm Fix}(g^{15})$. Then ${\rm Fix}(g^{15})$ consists of $5t$ points, $5d+1$ smooth rational
curves,  and possibly a curve $C_r$ of genus $r\ge 2$ on which $u$ acts feely. Then $$\Tr (u^*|H^*(X^{g^{15}}))=\Tr (u^*|H^*({\bf P}^1))+\Tr (u^*|H^*(C_r))= 2,$$
 again $\Tr (u^*|H^*(X^{g^{15}}))\neq \Tr (g^{24*}|H^*(X))$.
 
Assume that $u=g^9$ fixes a point of a curve $C_r$ of genus $r\ge 2$ in
${\rm Fix}(g^{15})$. Then ${\rm Fix}(g^{15})$ consists of $5t$ points, $5d$ smooth
rational curves and the curve $C_r$.  Since $e(g^{15})=5t+10d+2-2r=6$, we see that $t=2t'$ for some $t'$ and
$r=5t'+5d-2\ge 3$. Deligne-Lusztig does not work here, as $u^*|H^1_{\rm et}(C_r,{\bbQ}_l)$ may have
$$\Tr [u^*|H^1_{\rm et}(C_r,{\bbQ}_l)]=\Tr [1.(2t'+2d+4),\, (\zeta_5:4).(2t'+2d-2)]=6.$$ The action of
$g$ on the $10t'$ isolated points of ${\rm Fix}(g^{15})$ has orbits of length 15
or 5, since $g^9$ has orbits of length 5 only. We see that
$g^5$ fixes no point in any orbit of $g$ of length 15, and fixes
each point in any orbit of length 5. Since $\dim H^2_{\rm et}(X,{\bbQ}_l)^{g^{15}}=10$, we see that $d\le 1$ and
$g^5$ preserves each of the $5d$ smooth rational curves. If
$C_r\subset {\rm Fix}(g^{5})$, then
$$e(g^{5})=e(g^{15})-15b=6-15b$$ for some $b\ge 0$, which contradicts the direct computation of trace $$\Tr
(g^{5*}|H^*(X))=0\,\,{\rm or}\,\, 12.$$ Thus
$C_r\nsubseteq {\rm Fix}(g^{5})$, i.e. $g^5$ acts non-trivially on $C_r$. Since $g$
has a fixed point on $C_r$, so does $g^5$. Thus $e(g^{5})>0$, hence $e(g^{5})=12$. From this we
infer that $g^5$ has 12, 7 or 2 fixed points on $C_r$. In any
case, $g$ acts on these points, hence fixes at least 2 points.

\medskip
Claim:  $n$ cannot be a prime $\ge 17$.\\
Suppose that $n$ is a prime $\ge 17$. Then by Theorem \ref{faith},
either $\zeta_n$ or $\zeta_{5n}$ must appear in $[g^*]$. But
$\phi(n)>13$ and $\phi(45)>8$. 

\medskip
Claim:  $n\neq 11$.\\
Suppose that $n=11$. Then by Theorem \ref{faith}, $\zeta_{11}$
must appear in $[g^*]$ and
$$[g^*]=[1,\,\zeta_{11}:10,\,1.3,\,(\zeta_5:4).2].$$  The automorphism $s=g^5$ is non-symplectic of order 11 and has
$$[s^*]=[g^{5*}]=[1,\,\zeta_{11}:10,\,1.3,\,\,1.8], \quad e(g^{5})=13,\quad \Tr
(g^{16*}|H^*(X))=3.$$  The order 5 action of $g$ and $u=g^{11}$ on ${\rm Fix}(g^5)$ has
exactly one fixed point.

Assume that $u=g^{11}$ fixes an isolated point of ${\rm Fix}(g^5)$. Then
${\rm Fix}(g^5)$ consists of $5t+1$ points, $5d$ smooth rational curves, and possibly a curve
$C_r$ of genus $r\ge 2$  on which $u$ acts feely. Then $\Tr (u^*|H^*(C_r))=0$, so $$\Tr (u^*|H^*(X^{g^{5}}))=\Tr (u^*|H^*({\rm a\,\,point}))+\Tr (u^*|H^*(C_r))= 1$$ and $\Tr (u^*|H^*(X^{g^{5}}))\neq \Tr (g^{16*}|H^*(X))$, which contradicts Deligne-Lusztig.

Assume that $u=g^{11}$ fixes a point of a smooth rational curve in
${\rm Fix}(g^5)$. Then ${\rm Fix}(g^5)$ consists of $5t$ points, $5d+1$ smooth rational
curves, and possibly a curve
$C_r$ of genus $r\ge 2$  on which $u$ acts feely. Then $$\Tr (u^*|H^*(X^{g^{5}}))=\Tr (u^*|H^*({\bf P}^1))+\Tr (u^*|H^*(C_r))= 2,$$
 again $\Tr (u^*|H^*(X^{g^{5}}))\neq \Tr (g^{16*}|H^*(X))$.
 
 Assume that $u=g^{11}$ fixes a point of a curve $C_r$ of genus $r\ge 2$
in ${\rm Fix}(g^5)$. Then ${\rm Fix}(g^5)$ consists of $5t$ points, $5d$ smooth
rational curves $R_1,\ldots, R_{5d}$ and the curve $C_r$.  Since
$e(g^5)=5t+10d+2-2r=13$,
$2r-2=5t+10d-13\ge 2.$ Deligne-Lusztig does not work
here, as  $u^*|H^1_{\rm et}(C_r,{\bbQ}_l)$ may have
$$\Tr [u^*|H^1_{\rm et}(C_r,{\bbQ}_l)]=\Tr [1.(t+2d-3),\,\, (\zeta_5:4).(t+2d-2)]=-1.$$ Since $g^5$ is
non-symplectic of order 11, we may assume that
$g^{5*}\omega_X=\zeta_{11}^5\omega_X$. Then an isolated fixed point of
$g^5$ is one of the following 5 types:
$\frac{1}{11}(1,4)$, $\frac{1}{11}(2,3),\,\,\frac{1}{11}(6,10),\,\,\frac{1}{11}(7,9),\,\,\frac{1}{11}(8,8).$
Let $5t_i$  be the number of isolated fixed point of the $i$-th
type. Then $$\sum t_i=t.$$ The quotient $X'=X/\langle g^5\rangle$
is a rational surface with
$$K_{X'}=-\frac{10}{11}\Big(C_r'+\sum_{i=1}^{5d} R_i'\Big)$$
where $C_r'$ and $R_i'$ are the images of $C_r$ and $R_i$. Let
$\varepsilon: Y\to X'$ be a minimal resolution. Then
$K_Y=\varepsilon^*K_{X'}-\sum D_p$
where $D_p$ is an effective $\bbQ$-divisor supported on the
exceptional set of the singular point $p\in X'$. Thus
$$K_Y^2=K_{X'}^2+\sum D_p^2=K_{X'}^2-\sum K_YD_p.$$
See, e.g., Lemma 3.6 \cite{HK1} for the formulas of $D_p$ and
$K_YD_p$, which are valid for tame quotient singular points in
positive characteristic.
We compute 
$$K_Y^2= 10-\rho(Y)=
10-\{\rho(X')+10t_1+20t_2+25t_3+10t_4+5t_5\}$$ 
$$=-2-10t_1-20t_2-25t_3-10t_4-5t_5$$
where we use $\rho(X')=\dim H^2_{\rm
et}(X,{\bbQ}_l)^{g^5}=12$.\\
Since
$C_r'^2=11(5t+10d-13)$ and $R_i'^2=-22$, 
we have
$$K_{X'}^2=\frac{10^2}{11^2}\{ 11(5t+10d-13)-22\cdot
5d \}=\frac{10^2}{11}(5t_1+5t_2+5t_3+5t_4+5t_5-13).$$ 
Since
$K_YD_p=\frac{20}{11},\,\,\,\frac{6}{11},\,\,\,\frac{5}{11},\,\,\,\frac{32}{11},\,\,\,\frac{81}{11}$
for $p$ of each type,
$$\sum K_{Y}D_p=\frac{1}{11}(100t_1+30t_2+25t_3+160t_4+405t_5).$$ 
Thus we finally have
$$510t_1+690t_2+750t_3+450t_4+150t_5=1278,$$
which has no integer solution, as 1278 is not divisible by 10.

\medskip
Claim:  $n\neq 13$.\\
We omit the proof, which is similar to the previous case.

\medskip
Claim:  $n\neq 7$.\\
Suppose that $n=7$. Then $u=g^{7}$. Let $s=g^{5}.$ By Theorem
\ref{faith}, $\zeta_7$ must appear in $[g^*]$. Suppose that
$[g^*]=[1,\,\zeta_7:6,\,1.7,\,(\zeta_5:4).2].$
Then $[g^{12*}]=[1,\,\zeta_7:6,\,1.7,\,1.8],$ hence the order 7
automorphism $s=g^{5}$ is non-symplectic by Lemma \ref{Lefschetz}
and the quotient $X/\langle g^5\rangle$ is rational. Then by
Proposition \ref{diminv}, $\rank~{\rm NS}(X)^s=\dim H^2_{\rm
et}(X,{\bbQ}_l)^s$. We infer that $[1.7]$ in $[g^*]$ are
supported on ${\rm NS}(X)$, hence $\rank~{\rm NS}(X)^u\ge 8$, contradicting Lemma \ref{1ptK3}. This proves that
$$[g^*]=[1,\,(\zeta_7:6).2,\,1,\,(\zeta_5:4).2].$$  Then $$[g^{12*}]=[g^*],\quad \Tr
(g^{12*}|H^*(X))=0.$$  By Deligne-Lusztig applied to $s=g^5$ and $u=g^7$, $$\Tr(u^*|H^*({\rm Fix}(g^5)))=0.$$ Note that the order 5
action of $u=g^7$ on ${\rm Fix}(g^5)$ has exactly one fixed point.
Since ${\rm Fix}(g^5)$
consists of isolated points, smooth rational curves,  and possibly a curve
$C_r$ of genus $r\ge 2$, the vanishing of the trace implies that the action of $u$ on ${\rm Fix}(g^5)$ is free, a contradiction.

\medskip
Claim:  $n\neq 12$.\\
Suppose that $n=12$. Then $u=g^{12}$ and
$${\rm Fix}(g)={\rm Fix}(g^2)={\rm Fix}(g^3)={\rm Fix}(g^4)={\rm Fix}(g^{6})= {\rm Fix}(g^{12})=\{{\rm one\,\,point}\}.$$
We claim that $g^{30}$ is not symplectic. Suppose that it is. Then by
Proposition \ref{sym} ${\rm Fix}(g^{30})$ consists of 8 points,
on which $g$ acts. This permutation has order dividing 30, so in its cycle decomposition each cycle has length 1, 2, 3, 5, 6, hence  $g^6$
must fix at least 3 points. This proves the claim. Write
$$[g^{*}]=[1,\,(\zeta_{12}:4).a,\,\,(\zeta_4:2).b,\,\,\eta_1,\ldots,\eta_{13-4a-2b},\,\tau_1,\ldots, \tau_{8}]$$ where $\eta_1,\ldots,\eta_{13-4a-2b}$ is a combination of $\pm\zeta_{3}:2$ and $\pm 1$, and  $\tau_1,\ldots, \tau_{8}$ a combination of $\pm\zeta_{5}:4$, $\pm\zeta_{15}:8$, $\zeta_{20}:8$. Since $X'=X/\langle g^{30}\rangle$ is rational, $\rank~{\rm NS}(X)^{g^{30}}=\dim H^2_{\rm
et}(X,{\bbQ}_l)^{g^{30}}$ by
Proposition \ref{diminv}. It follows that $\eta_1,\ldots,\eta_{13-4a-2b}$ are
supported on ${\rm NS}(X)$. Then by Lemma \ref{1ptK3}, $$13-4a-2b\le 3, \,\,i.e.\,\,\,10\le 4a+2b\le 12.$$ We claim that $b\le 1$. Suppose $b\ge 2$. Then  $g^{20}$ is
symplectic. Otherwise, $X/\langle g^{20}\rangle$ is rational and by
Proposition \ref{diminv} $(\zeta_4:2).b$ are
supported on ${\rm NS}(X)$, then ${\rm NS}(X)^{g^{12}}$ would have rank $\ge 1+2b\ge 5$, contradicting Lemma \ref{1ptK3}. Since $g^{20}$ is
symplectic, $\zeta_3:2$ appears 6 times in $[g^{20*}]$. This is possible only if 
$a=1$ and  $[\tau_1,\ldots, \tau_{8}]=[\pm\zeta_{15}:8]$ (hence $b=3$, $4$, and $\eta_i=\pm 1$), i.e. $[g^*]=[1,\,\zeta_{12}:4,\,(\zeta_4:2).3,\,\pm 1,\,\pm 1,\,\pm 1,\,\pm\zeta_{15}:8]$ or $[1,\,\zeta_{12}:4,\,(\zeta_4:2).4,\,\pm 1,\,\pm\zeta_{15}:8]$. Then  $[g^{10*}]=[1,\,(\zeta_6:2).2,\,-1.6,\,1.3,\,(\zeta_3:2).4]$ or $[g^{10*}]=[1,\,(\zeta_6:2).2,\,-1.8,\,1,\,(\zeta_3:2).4]$, hence $e(g^{10})=-2$ or $-6$. Since $e(g^{10})<0$, ${\rm Fix}(g^{10})$ contains a curve of genus $>1$, then so does ${\rm Fix}(g^{20})$, which is a 6 point set.
This proves that $b\le 1$, hence $(a,b)=(3,0)$ or $(2,1)$,
yielding the following 6 cases for $[g^*]$:

\medskip
(1) $[1,\,(\zeta_{12}:4).3,\,\,\pm 1,\,\,\tau_1,\ldots, \tau_{8}]$,

(2) $[1,\,(\zeta_{12}:4).3,\,\,\pm 1,\,\,\zeta_{20}:8]$,

(3) $[1,\,(\zeta_{12}:4).2,\,\,\zeta_4:2,\,\,\pm 1,\,\,\pm 1,\,\,\pm 1,\,\,\tau_1,\ldots, \tau_{8}]$, 

(4) $[1,\,(\zeta_{12}:4).2,\,\,\zeta_4:2,\,\,\pm 1,\,\,\pm 1,\,\,\pm 1,\,\,\zeta_{20}:8]$, 

(5) $[1,\,(\zeta_{12}:4).2,\,\,\zeta_4:2,\,\,\pm\zeta_3:2,\,\,\pm 1,\,\,\tau_1,\ldots, \tau_{8}]$, 

(6) $[1,\,(\zeta_{12}:4).2,\,\,\zeta_4:2,\,\,\pm\zeta_3:2,\,\,\pm 1,\,\,\zeta_{20}:8]$, 

where $\tau_1,\ldots, \tau_{8}$ are combination of $\pm\zeta_{5}:4$, $\pm\zeta_{15}:8$.

\medskip\noindent
Case (1). In this case, $[g^{30*}]=[1,\,-1.12,\,1,\,1.8]$, 
$e(g^{30})=0$, $e(g^{10})=18$ or $6$. Since ${\rm Fix}(g^{5i})$ contains no elliptic curve, ${\rm Fix}(g^{30})$ consists of a
curve $C_{d+1}$ of genus $d+1$ and $d$ smooth rational curves,
where $1\le d\le 9$. Note that $\dim H^2_{\rm
et}(X,{\bbQ}_l)^{g^{6}}=2$. Thus the action of $g^6$ on the $d$ smooth rational curves has at most 1 orbit. This is possible only if $d=1$ of $5$. 
If $d=1$, then $g^6$ acts freely on $C_2$, but no genus 2 curve admits an order 5 free action. If $d=5$, then $g^{10}$ preserves
each of the $5$ smooth rational curves and, since $e(g^{10})=18$
or $6$, fixes 8 points on  $C_{6}$. Then $g^2$ acts on these 8 points, hence fixes at least 3 of them.

\medskip\noindent
Case (2). In this case, $[g^{30*}]=[1,\,-1.12,\,1,\,-1.8]$, 
$e(g^{30})=-16$. Thus ${\rm Fix}(g^{30})$ consists
of either a curve of genus 9 or a curve of genus $10$ and a smooth rational curve. Note that $e(g^{10})=2$, so the tame involution $g^{10}$
has 2 fixed points on ${\rm Fix}(g^{30})$. Then $g^2$ acts on these 2 points, hence fixes both.

\medskip\noindent
Case (3), (4). In these cases, $e(g^{20})=12$ or 0. Thus $g^{20}$ is not symplectic, 
$X/\langle g^{20}\rangle$ is rational, and by
Proposition \ref{diminv} $[\zeta_4:2,\,\pm 1\,\pm 1\,\pm 1]$ in $[g^*]$ are
supported on ${\rm NS}(X)$. Then ${\rm NS}(X)^{g^{12}}$ has rank $\ge 1+5$, contradicting Lemma \ref{1ptK3}.

\medskip\noindent
Case (5). In this case, $[g^{30*}]=[1,\,-1.8,\,-1.2,\,1.3,\,1.8]$, 
$e(g^{30})=4$. Thus ${\rm Fix}(g^{30})$ consists of either 2 smooth rational curves or a curve $C_{d-1}$ of genus $d-1$ and $d$ smooth rational
curves, $3\le d\le 11$. In the first case, $g^2$ acts on ${\rm Fix}(g^{30})$, hence preserves each of the 2 rational curves, then fixes at least 2 point on them. Assume the second case. Note that $\dim H^2_{\rm
et}(X,{\bbQ}_l)^{g^{2}}=2$. Thus the action of $g^2$ on the $d$ smooth rational curves has at most 1 orbit. The length of an orbit divides 15. Thus $d=3$ of $5$. 
If $d=3$, then $g^6$ preserves each of the 3 rational curves, hence fixes at least 3 points on them. If $d=5$, then $g^{10}$ preserves
each of the $5$ smooth rational curves and, since $e(g^{10})=13$
or $1$, fixes 3 points on  $C_{4}$. Then $g^2$ acts on these 3 points, hence fixes all of them.

\medskip\noindent
Case (6). In this case, $[g^{30*}]=[1,\,-1.8,\,-1.2,\,1.3,\,-1.8]$, 
$e(g^{30})=-12$, $e(g^{10})=-3$. Thus ${\rm Fix}(g^{30})$ consists of a curve $C_{d+7}$ of genus $d+7$ and $d$ smooth rational
curves, $0\le d\le 3$. 
Since $e(g^{10})<0$, ${\rm Fix}(g^{10})$ must contain $C_{d+7}$, hence  $e(g^{10})\le  e(g^{30})$, i.e. $-3\le -12$, absurd.
\end{proof}

\begin{example}\label{exam5-2} In char $p = 5$, there are K3 surfaces with an automorphism of order 30 or 20.

\medskip
(1) $X_{30}: y^2=x^3+(t^{5}-t)^2$, $g_{30}(t,x,y)=(t+1,\zeta_3 x,
-y)$;

\medskip
(2) $X_{20}: y^2=x^3+(t^{5}-t)x$, $g_{20}(t,x,y)=(t+1, -x, \zeta_4
y)$.

\medskip\noindent
The surface $X_{30}$ has 6
 $IV$-fibres at $t=\infty$, $t^5-t=0$; $X_{20}$ has a
 $III^*$-fibre at $t=\infty$ and 5 $III$-fibres at $t^5-t=0$
 (\cite{DK1}, 5.8).
\end{example}

\textit{Acknowledgement}

\medskip
I am grateful to Igor Dolgachev, Toshiyuki Katsura, Shigeyuki
Kond\=o, Keiji Oguiso, Matthias Sch\"utt and Weizhe Zheng for helpful comments. A
part of this paper was written during my stay at Roma Tre in
February 2012, I would like to thank Edoardo Sernesi for his
hospitality.



\begin{thebibliography}{99}





\bibitem{Artin}  M. Artin, \textit{Coverings of the rational double points in characteistic $p$}, in Complex Analysis and Algebraic Geometry, Iwanami Shoten, Tokyo, 1977,
11--22


\bibitem{CD} F. Cossec,  I. Dolgachev,  \textit {Enriques surfaces I}, Birkh\"auser 1989

\bibitem{CoxZ}  D. Cox, S. Zucker, \emph{Intersection numbers of sections of elliptic surfaces}, Invent. Math. {\bf 53} (1979), 1--44

\bibitem{De} P. Deligne, \textit{ Rel\`evement des surfaces K3 en caract\'eristique nulle}, in Surface alg\'ebrique, edt J. Giraud, L, Illusie and M. raynaud, Lecture Notes in Math. {\bf 868}, Springer (1981), 58-79.

\bibitem{DL} P. Deligne, G. Lusztig, \textit{ Representations of reductive groups over finite fields}, Ann. of Math. (2) {\bf 103} (1976), no. 1, 103--161.

\bibitem{DK1} I. Dolgachev, J. Keum, \textit{ Wild
$p$-cyclic actions on K3 surfaces}, J. Algebraic Geometry, {\bf
10} (2001), 101--131

\bibitem{DK2} I. Dolgachev, J. Keum, \textit{Finite groups of symplectic
automorphisms of K3 surfaces in positive characteristic}, Ann. of
Math. {\bf 169} (2009), 269--313

\bibitem{DK3} I. Dolgachev, J. Keum, \textit{K3 surfaces with a symplectic
automorphism of order $11$}, J. Eur. Math. Soc. {\bf 11} (2009),
799--818

\bibitem{HK1} D. Hwang and J. Keum, \textit{The maximum number of singular points on rational homology projective planes}, J. Algebraic
Geom. {\bf 20} (2011), 495--523.

\bibitem{Ill}  L. Illusie, \textit{Report on crystalline
cohomology}, in ``Algebraic Geometry, Arcata 1974", Proc. Symp.
Pure math. vol. 29 , AMS, pp. 459--478

\bibitem{Ito} H. Ito, \textit{On automorphisms of supersingular K3 surfaces}, Osaka J.
Math.  {\bf 34} (1997), 717--724

\bibitem{ItoL} H. Ito, Ch. Liedtke, \textit{Elliptic K3 surfaces with $p^n$-torsion secions},  arXiv:1003.0144v3 [math.AG]

\bibitem{K} J. Keum, \textit{Automorphisms of Jacobian Kummer surfaces}, Compositio Math. {\bf 107} (1997), 269--288

\bibitem{K2} J. Keum, \textit{K3 surfaces with an order $60$
automorphism and a characterization of supersingular K3 surfaces with Artin invariant 1}, arXiv:1204.1711 [math.AG]

\bibitem{Ko} S. Kond\=o, \textit{Automorphisms of algebraic K3 surfaces which act trivially on Picard groups}, J. Math. Soc. Japan
{\bf 44} (1992), 75--98

\bibitem{Ko2} S. Kond\=o, \textit{The maximum order of finite groups of automorphisms of K3
surfaces}, Amer. J. Math. {\bf 121} (1999), no. 6, 1245--1252.
\bibitem{Ko3} S. Kond\={o}, \textit{Maximal subgroups of the Mathieu group $M_{23}$ and symplectic automorphisms of supersingular K3 surfaces}, IMRN
{\bf 2006}, Art ID 71517, 9pp.

\bibitem{MO} N. Machida, K. Oguiso, \textit{On K3 surfaces admitting finite non-symplectic group actions}, J. Math. Sci. Univ. Tokyo
{\bf 5} (1998), 273--297

\bibitem{Muk}  S. Mukai, \textit {Finite groups of automorphisms of
$K3$ surfaces and the Mathieu group}, Invent. Math. {\bf 94}
(1988), 183--221.

\bibitem{Nik} V. V. Nikulin, \textit{Finite groups of automorphisms of K\"ahlerian surfaces of type K3}, Uspehi Mat. Nauk {\bf 31} (1976), no. 2; Trans. Moscow Math. Soc., {\bf 38} (1980), 71--135

\bibitem{Ny}  N. Nygaard, \textit {Higher DeRam-Witt complexes on supersingular K3 surfaces}, Compositio Math.  {\bf 42}
(1980/81), 245--271

\bibitem{Og1} K. Oguiso, \textit{A remark on global indices of $\bbQ$-Calabi-Yau 3-folds}, Math. Proc. Camb. Phil. Soc.
{\bf 114} (1993), 427--429

\bibitem{Ogus} A. Ogus,
\textit{Supersingular K3 crystals}, in ``Journ\'ees de G\'eometrie
Alg\'ebrique de Rennes'', Asterisque, Vol. {\bf 64} (1979), pp. 3--86



\bibitem{RS} A.\ Rudakov, I.\ Shafarevich, {\it Surfaces of type K3
over fields of finite characteristic}, Current problems in
mathematics, vol. 18, pp. 115-207, Akad. Nauk SSSR, Vsesoyuz.
Inst. Nauchn. i Tekhn. Inform., Moscow, 1981 (reprinted in I. R.
Shafarevich, ''Collected Mathematical Papers'', Springer-Verlag,
1989, pp.657--714).

\bibitem{Serre} J.-P. Serre, \textit{Le groupe de Cremona et ses sous-groupe finis}, S\'eminaire Bourbaki, Vol. 2008/2009, Expos\'es 997-1011,
Ast\'erisque No. 332 (2010), Exp. No. 1000, 75-100

\bibitem{Shioda2}  T. Shioda, \textit{An explicit algorithm for computing the Picard number of certain algebraic surfaces},
Amer. J. Math. {\bf 108} (1986), 415--432

\bibitem{Shioda3}  T. Shioda, \emph{On the Mordell-Weil lattices}, Commemtarii Mathematici Univ. Sancti Pauli {\bf 39} (1990), 211--240
\bibitem{Silverman} J. Silverman,  \textit{The arithmetic of elliptic curves}, Grad. Texts in Math. 106, Springer-Verlag
1986

\bibitem{Ueno} K. Ueno, \textit{Classification theory of algebraic varieties and compact complex spaces},
Springer Lect. Notes in Math. {\bf 439} (1975)

\bibitem{Ueno86} K. Ueno, \textit{A remark on automorphisms of Kummer surfaces in characteristic
$p$}, J. Math. Kyoto Univ. {\bf 26} (1986), 483--491

\bibitem{Xiao} G. Xiao, \textit{Non-symplectic involutions of a K3 surface}, unpublished

\end{thebibliography}
\end{document}